\documentclass[12pt]{amsart}
\usepackage{stablestyle,afterpage,pdflscape,graphicx}
\usepackage{color}
\usepackage{microtype}
\usepackage[margin=1in]{geometry}

\usepackage{mathtools}
\author{Espen Auseth Nielsen}
\title[Strictifying homotopy coherent actions on Hochschild complexes]{Strictifying homotopy coherent actions \\ on Hochschild complexes}

\newcommand{\Par}{\text{Par}}
\newcommand{\sCh}{\mathsf{sCh}}


\begin{document}

\newpage
\pagenumbering{arabic}

\begin{abstract} 
\noindent If $P$ is a dg-operad acting on a dg-algebra $A$ via algebra homomorphisms, then $P$ acts on the Hochschild complex of $A$. In the more general case when $P$ is a dg-prop, we show that $P$ still acts on the Hochschild complex, but only up to coherent homotopy. We moreover give a functorial dg-replacement of $P$ that strictifies the action. As an application, we obtain an explicit strictification of the homotopy coherent commutative Hopf algebra structure on the Hochschild complex of a commutative Hopf algebra.
\end{abstract}

\subjclass[2010]{13D03, 16E35, 18D05, 18D10}

\maketitle

\section{Introduction}

A \emph{dg-prop} \cite[Section 24]{maclanecatalg} is a symmetric monoidal dg-category $P$ whose monoid of objects is isomorphic to $(\N,+)$. If $P$ is a dg-prop and $\C$ a symmetric monoidal dg-category, then a \emph{$P$-algebra} in $\C$ is a symmetric monoidal functor $P\rightarrow \C$. A \emph{morphism} of dg-props is a symmetric monoidal dg-functor which induces an isomorphism of object monoids, and such a morphism is called a \emph{quasi-equivalence} if it induces quasi-isomorphisms on Hom-complexes. Let $\mathsf{dgprop}$ be the subcategory of $\dgCat$ generated by the dg-props and morphisms of dg-props. Examples of dg-props arise from dg-operads $\mathcal{O}$ (see e.g. \cite[Example 60]{markl}) by the formula
$$P(n,m)=\bigoplus_{n_1+...+n_m=n} \mathcal{O}(n_1)\otimes ... \otimes \mathcal{O}(n_m)$$
and composition defined using the composition product of $\mathcal{O}$. With this definition, an algebra over an operad is precisely an algebra over the dg-prop it generates (see e.g. \cite[p.10]{fiorenza}. In the following, we will therefore not distinguish between a dg-operad and the dg-prop it generates.

\bigskip The Hochschild complex is a functor from $\A_\infty$-algebras to chain complexes. If $P$ is a prop equipped with a morphism of props $\A_{\infty}\rightarrow P$, the Hochschild complex restricts to a functor from $P$-algebras. It is an open problem to compute the operations on the Hochschild complex of algebras over such props $P$. Partial results have been obtained in many cases, see e.g. \cite{wahl16,wahl12}. One such case is the following.

\bigskip If $P$ is a dg-operad (or a dg-prop) the Boardman-Vogt tensor product $\Ass\otimes P$ (see \cite[Section II.3]{bv}) is the dg-prop characterized by the equivalence
$$\Fun^{\otimes}(\Ass\otimes P, \C)\simeq \Fun^{\otimes}(P,\Alg(\C))$$
for any symmetric monoidal dg-category $\C$. Evaluating the right hand side at $1\in P$, we obtain a functor from $\Ass\otimes P$-algebras to $\Ass$-algebras. The Hochschild complex of an $\Ass\otimes P$-algebra is by definition the Hochschild complex of the associated $\Ass$-algebra. The Hochschild complex functor is lax monoidal. The structure morphisms may be used to prove that for a dg-operad $P$ and an algebra $A$ over the tensor product ${\Ass\otimes P}$, the Hochschild complex of $A$ admits a $P$-algebra structure (see \cite{brun07} and \cite[Section 6.9]{wahl16}). On the other hand, this fails if $P$ is a more general dg-prop. This is due to the failure of the Dold-Kan equivalence to be a symmetric monoidal equivalence. It is however true up to coherent homotopy, as the Dold-Kan equivalence is an $\mathbb{E}_{\infty}$-monoidal equivalence \cite[Section 5]{richter03}. In this paper, we give an explicit functorial strictification of the natural homotopy coherent $P$-algebra structure on the Hochschild complex of a $(P\otimes\Ass)$-algebra. Formally this is encoded in the following result.

\bigskip\noindent \textbf{Theorem A.} 
\textit{Let $\kappa$ be an inaccessible cardinal and let $k$ be a commutative ring with cardinality less than $\kappa$. Let $\Ch_k$ be the category of chain complexes over $k$ with cardinality less than $\kappa$ and let $\mathsf{dgprop}$ be the category of dg-props over $k$. There is a functor
$$\tilde{(-)}\colon  \mathsf{dgprop}\rightarrow \mathsf{dgprop}$$
equipped with a natural quasi-equivalence $\tilde{(-)}\rightarrow \id$ and a natural transformation
$$\tilde{\alpha}\colon \Fun^{\otimes}(\Ass\otimes -,\Ch_k)\rightarrow \Fun^{\otimes}(\tilde{(-)},\Ch_k)$$
of functors $\mathsf{dgprop}^{op}\rightarrow \Cat$ such that for a dg-prop $P$ and an $\Ass\otimes P$-algebra
$$\Phi\colon \Ass\otimes P\rightarrow \Ch_k$$
the value $\tilde{\alpha}_P(\Phi)(1)$ is equal to the Hochschild complex of $\Phi(1)$.}

\bigskip We use explicit generators and relations to construct the functor $\tilde{(-)}$, fattening the input prop with the structure maps of the Dold-Kan equivalence. The functor $\tilde{(-)}$ also admits the structure of a non-unital monad.

\bigskip\noindent \textbf{Example.} (Example \ref{example-chopf}) If $\Phi\colon \CHopf\rightarrow \dgAlg_k$ is a commutative Hopf algebra over any ring $k$, then
$$\tilde{\alpha}_{\CHopf}(\Phi)\colon \tilde{\CHopf}\rightarrow \Ch_k$$
gives an explicit strict model for the coherent commutative Hopf algebra structure of the Hochschild complex of $\Phi(1)$.

\bigskip Given a dg-prop $P$, one may ask whether $\tilde{P}$ is cofibrant in a model structure on dg-props. In \cite{fresse08}, Fresse constructs a model structure on the category of props over a field of characteristic zero, and a semi-model structure on certain sub-families of props in positive characteristic. However, for example Hopf algebras in positive characteristic cannot be treated in his framework. Additionally, in characteristic zero, our replacement $\tilde{P}$ will not be cofibrant. For example, our replacement of the commutative prop still has a strictly commutative multiplication. 

\bigskip\noindent\textbf{Further Questions.} Theorem A displays $\tilde{P}$ as a sub-prop of the prop of natural operations on the Hochschild complex. On the other hand, it leaves open the interaction of the $P$-action with Connes' $B$-operator. The determination of the total prop of natural operations on $C(A)$ is still an interesting open problem with a view toward operations on cyclic homology.

\bigskip The structure of the paper is as follows. In Section 2 we define the Dold-Kan structure maps and their action on Hochschild complexes, and establish necessary properties. In Section 3 we define the fattening functor $\tilde{(-)}$ for dg-props and prove the main theorem.

\begin{samepage}
\bigskip
\begin{center}
\textbf{Acknowledgements}
\end{center}
I am very grateful to my advisor Nathalie Wahl for helpful discussions, comments and proofreading, to David Sprehn for proofreading, and to Tobias Barthel for helpful comments. I am thankful to Marcel B\" okstedt, S{\o}ren Galatius, and Birgit Richter for helpful comments. The author was supported by the Danish National Research Foundation through the Centre for Symmetry and Deformation (DNRF92).
\end{samepage}

\section{The cyclic bar construction and the Dold-Kan equivalence}

In this section we will build a dg-category $\tilde{N}^{\Sigma}$ from the structure maps of the Dold-Kan correspondence and establish the action of $\tilde{N}^{\Sigma}$ on Hochschild complexes of dg-algebras. This dg-category is a key ingredient for the fattening functor we will construct in Section 3. 

\begin{convention}
Throughout, we fix a commutative base ring $k$. All algebras are assumed to be algebras over $k$. We employ the Kozul sign convention for chain complexes. In particular, our convention for bicomplexes are that the differentials anti-commute. Furthermore, throughout the paper we fix an inaccessible cardinal $\kappa$. All abelian groups (in particular, all simplicial modules and chain complexes over $k$) are assumed to have cardinality less than $\kappa$.
\end{convention}

We begin by recalling some basic notions from homological algebra.

\bigskip We will work with the categories $\sMod_k$ of simplicial $k$-modules and $\Ch_k$ of non-negatively graded chain complexes over $k$ with $k$-linear chain maps, where we consider $\Ch_k$ as a category enriched in itself. $\sMod_k$ is a symmetric monoidal simplicial category with tensor product given by the degreewise tensor product of $k$-modules. We denote this tensor product by $\hat{\otimes}$. Similarly, $\Ch_k$ is a symmetric monoidal category with tensor product denoted by $\otimes$ and given by $(A\otimes B)_*=\oplus_{p+q=*} A_p\otimes B_q$ and differential $d_{A\otimes B}(a\otimes b) = d_A(a)\otimes b + (-1)^{|a|} a\otimes d_B(b)$. The category of monoids in $\sMod_k$ is denoted by $\sAlg_k$, and is a symmetric monoidal category with the levelwise tensor product.

\bigskip For a simplicial chain complex $A=A_{*,\bullet}$ over $k$, call $*$ the differential degree and $\bullet$ the simplicial degree. Write $d^{a,b}_A\colon A_{a,b}\rightarrow A_{a-1,b}$ for the differential and $d_i^{a,b}\colon A_{a,b}\rightarrow A_{a,b-1}$ for the simplicial face maps. Write $\sCh_k$ for the category of simplicial chain complexes over $k$.

\bigskip The Dold-Kan equivalence
$$N\colon \sMod_k \rightleftarrows \Ch_k :  \Gamma$$
gives an equivalence of categories between simplicial $k$-modules and connective chain complexes over $k$. The functor $N\colon \sMod_k\rightarrow \Ch_k$, is called the \emph{normalized Moore complex} functor, and takes a simplicial $k$-module $M_{\bullet}$ to the chain complex $NM_*$ with $NM_p=M_p/sM_{p-1}$, the quotient of $M_p$ by the degenerate simplices, and $d\colon NM_p\rightarrow NM_{p-1}$ given by the alternating sum $d=\sum_{i=0}^p (-1)^{i}d_i$. The inverse functor $\Gamma\colon \Ch_k\rightarrow \sMod_k$ is called the Dold-Kan construction. We can also apply $N$ degreewise to a simplicial chain complex as follows:

\begin{defn}
\begin{enumerate}

\item For $A\in \sCh_k$, the \emph{bicomplex associated to $A_{*,\bullet}$} is denoted by $N_\epsilon(A_{*,\bullet})_*$ and is obtained by applying the Moore complex functor levelwise and shifting the differentials by the differential degree of $A$. Writing this out, we have $N_\epsilon(A_{a,\bullet})_b=A_{a,b}/sA_{a,b-1}$, the horizontal differential is $d_h=d_A$, and the vertical differential is
$$d_v^{a,b}=(-1)^{a} \sum_{i=0}^b (-1)^i d_i^{a,b}$$
We write
$$N_{\delta}(A) \colon = \text{Tot} \left(N_\epsilon(A_{*,\bullet})_*\right).$$

\item Let $A$ and $B$ be simplicial chain complexes over $k$ and denote by $A\hat{\otimes}B$ the simplicial chain complex which in simplicial degree $p$ is given by $A_{*,p}\otimes B_{*,p}$. The differential of $A\hat{\otimes}B$ is given by
$$d_{A\hat{\otimes}B}^{n,p}(a\otimes b)=d_A^{|a|,p}a\otimes b + (-1)^{|a|+p} a\otimes d_B^{|b|,p}b.$$
\end{enumerate}
\end{defn}

\begin{defn} \label{defn-cyclic-bar-construction} \cite[Section 5.3.2]{loday92} 
\begin{enumerate}
\item The \emph{cyclic bar construction} is the functor $B^{cy}\colon \dgAlg_k\rightarrow \sCh_k$ given in simplicial degree $p$ by $B^{cy}_p(A)=A^{\otimes p+1}$. The face maps $d_i\colon B^{cy}_p(A)\rightarrow B^{cy}_{p-1}(A)$ are given by
$$d_i\colon a_0\otimes ... \otimes a_p\mapsto \left\{ \begin{array}{ll} a_0\otimes ... \otimes a_i a_{i+1} \otimes ... \otimes a_p & , i=0,...,p-1 \\ (-1)^{|a_p|(|a_0|+...+|a_{p-1}|)} a_p a_0\otimes a_1\otimes ... \otimes a_{p-1} & , i=p \end{array} \right.$$
and the degeneracies $s_i\colon B^{cy}_p(A)\rightarrow B^{cy}_{p+1}(A)$ are given by
$$s_i\colon  a_0\otimes ... \otimes a_p\mapsto a_0\otimes ... \otimes a_i \otimes 1 \otimes a_{i+1}\otimes ... \otimes a_p$$
making $B^{cy}_{\bullet}(A)$ into a simplicial chain complex, called the \emph{cyclic bar construction} of $A$.
\item For a dg-algebra $A$, the complex
$$C(A)\colon = N_{\delta} B^{cy}_*(A)$$
is called the \emph{Hochschild complex} of $A$.
\end{enumerate}
\end{defn}

Note: The following lemma is doubtlessly well known, but the author was unable to find a reference which proves the result, so we give a proof here.

\begin{lem} \label{lem-bcyc-sym-mon}
The cyclic bar construction
$$B^{cy}\colon \dgAlg_k\rightarrow \sCh_k$$
is symmetric monoidal.
\end{lem}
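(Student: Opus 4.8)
The plan is to write down an explicit strong monoidal structure and check the coherence axioms by direct computation. The key observation is that, because $\hat{\otimes}$ is defined \emph{levelwise}, in each simplicial degree $p$ one is merely comparing $B^{cy}_p(A)\otimes B^{cy}_p(B)=A^{\otimes p+1}\otimes B^{\otimes p+1}$ with $B^{cy}_p(A\otimes B)=(A\otimes B)^{\otimes p+1}$, and these differ only by a permutation of tensor factors. I would therefore take the structure map $\mu_{A,B}\colon B^{cy}(A)\hat{\otimes}B^{cy}(B)\to B^{cy}(A\otimes B)$ to be, in each simplicial degree, the canonical interleaving isomorphism of the symmetric monoidal category $\Ch_k$,
$$(a_0\otimes\cdots\otimes a_p)\otimes(b_0\otimes\cdots\otimes b_p)\longmapsto (-1)^{\epsilon}(a_0\otimes b_0)\otimes\cdots\otimes(a_p\otimes b_p),\qquad \epsilon=\sum_{0\le i<j\le p}|b_i||a_j|,$$
the Koszul sign arising from moving each $b_i$ past the $a_j$ with $j>i$. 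Unlike the Eilenberg--Zilber map for the diagonal tensor product, no sum over shuffles appears, so $\mu_{A,B}$ is a genuine isomorphism and the functor will be \emph{strong}, not merely lax, monoidal. The unit is immediate: $B^{cy}_p(k)=k^{\otimes p+1}=k$ with all structure maps the identity, so $B^{cy}(k)$ is the constant simplicial chain complex on $k$, which is the monoidal unit of $(\sCh_k,\hat{\otimes})$.

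The substance of the argument is then to check that these degreewise isomorphisms assemble into a morphism of simplicial chain complexes, i.e. that $\mu_{A,B}$ commutes with the internal differentials, the face maps $d_i$, and the degeneracies $s_i$. Compatibility with the degeneracies is straightforward, since inserting the unit $1$ (of differential degree $0$) introduces no sign and commutes with interleaving. For the inner faces $d_i$ with $i<p$ the essential point is that the multiplication on the dg-algebra $A\otimes B$ is $(a\otimes b)(a'\otimes b')=(-1)^{|b||a'|}aa'\otimes bb'$; the Koszul sign $|b||a'|$ it produces is exactly the discrepancy between $\epsilon$ in degrees $p$ and $p-1$ when two adjacent pairs are merged. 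Compatibility with the internal differential is a parallel computation, in which one must additionally absorb the simplicial-degree sign $(-1)^{|a|+p}$ appearing in the differential of $A\hat{\otimes}B$.

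I expect the main obstacle to be compatibility with the last face map $d_p$, the cyclic one. Here the permutation signs $(-1)^{|a_p|(|a_0|+\cdots+|a_{p-1}|)}$ and $(-1)^{|b_p|(|b_0|+\cdots+|b_{p-1}|)}$ coming from $B^{cy}(A)$ and $B^{cy}(B)$ separately, together with the multiplication sign in $A\otimes B$, must all be reconciled against the interleaving signs in degrees $p$ and $p-1$; this is where the cyclic permutation interacts nontrivially with the Koszul conventions, and careful sign bookkeeping is required.

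Once $\mu_{A,B}$ is shown to be a well-defined morphism of simplicial chain complexes, the remaining axioms are formal. Naturality in $A$ and $B$ holds because the interleaving isomorphism is natural in its inputs. The associativity and symmetry (hexagon) axioms reduce, degreewise, to the coherence of the symmetry isomorphisms of $\Ch_k$, and therefore follow from Mac Lane's coherence theorem for the symmetric monoidal structure on $\Ch_k$; invertibility of $\mu_{A,B}$ is automatic once it is a degreewise isomorphism.
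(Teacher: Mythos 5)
Your proposal is correct and follows essentially the same route as the paper: the structure map you write down is the identical degreewise interleaving permutation, with the identical Koszul sign (your $\epsilon$ is the paper's $\sgn(a,b,\sigma)$), and the proof is reduced to degreewise sign bookkeeping. Two differences are worth recording. First, your checklist is in one respect more complete than the paper's written proof: the paper explicitly verifies only compatibility with the internal differential and the symmetry axiom, and never writes down compatibility with the simplicial operators; you correctly single out the cyclic face $d_p$ as the substantive check, and it does go through --- comparing $d_p\circ\mu$ with $\mu\circ(d_p\otimes d_p)$, the extra terms $|b_p|\sum_{i=0}^{p-1}|a_i| + |b_p||a_0|$ on the first side recombine, mod $2$, into the term $|b_p|\sum_{i=1}^{p-1}|a_i|$ appearing on the second, so the two signs agree. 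Second, where the paper proves the symmetry axiom by an explicit comparison of weighted signs ($\sgn_L=\sgn_R$), you invoke Mac Lane coherence; this is legitimate here, since both composites in the relevant square are built entirely from symmetry isomorphisms of $\Ch_k$ realizing the same underlying permutation of tensor factors, and it is arguably cleaner than the paper's computation. The only respect in which your text falls short of a finished proof is that the sign verifications (internal differential, inner faces, cyclic face) are described and located but not executed; as written it is a correct, executable plan whose hardest step you have flagged rather than carried out.
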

\begin{proof}
Let $A$ and $B$ be dg-algebras over $k$. Define the natural transformation
$B^{cy}(A)\hat{\otimes} B^{cy}(B)\rightarrow B^{cy}(A\otimes B)$ is given in simplicial degree $p$ by permuting tensor factors:
$$A^{\otimes p+1}\otimes B^{\otimes p+1}\xrightarrow{\sigma} (A\otimes B)^{\otimes p+1}$$
$$a_0\otimes ... \otimes a_p \otimes b_0\otimes ... \otimes b_p \mapsto (-1)^{\sgn(a,b, \sigma)} a_0\otimes b_0 \otimes ... \otimes a_p\otimes b_p$$
where $\sgn(a,b, \sigma)\in \Z/2$ is the sign of $\sigma$ weighted by the elements $a_i,b_j$, which can be computed as
$$\sgn(a,b, \sigma) \equiv \sum_{i=0}^{p-1} |b_i|\left( \sum_{j=i+1}^{p} |a_j| \right) \quad (\text{mod 2})$$
To check that this defines a chain map in simplicial degree $p$, we must verify that there are no sign issues. It is sufficient to consider each summand of the differential separately. For the differential acting on $a_k$ for $1\leq k \leq p+1$, the sign we get by permuting first (i.e. the sign associated to $d^{A\otimes B} \circ \sigma$) is
$$\sgn(a,b, \sigma)+ \left(\sum_{i=0}^{k-1} |a_i|+|b_i|\right)$$
where the second term comes from the placement of $a_k$ after permuting.
The sign we get by permuting second is
$$\left(\sum_{i=0}^{k-1} |a_i|\right) + \sgn(a,b, \sigma) + \sum_{j=0}^{k-1} |b_j|$$
where the third term is the correction to $\sgn_{a,b}\sigma$ when the degree of $a_k$ is decreased by one. We see that the two are equal. For the differential acting on $b_k$, the sign we get by permuting first is
$$\sgn(a,b, \sigma) + \left(\sum_{i=0}^{k-1} |a_i|+|b_i|\right) + |a_k|$$
and the sign we get by permuting second is
$$\left(\sum_{i=0}^{p} |a_i|\right) + \left(\sum_{j=0}^{k-1} |b_j|\right) + \sgn(a,b, \sigma) + \sum_{j=k+1}^{p} |a_j|$$
where the fourth term is the correction to $\sgn_{a,b}\sigma$ when the degree of $b_k$ is decreased by one. Again we see that the two expressions are equal mod 2, hence we have a chain map.

We now verify that $\sigma$ is a symmetric monoidal transformation. Let $\tau$ be the symmetric monoidal twist map of $\Ch_k$, given by $A\otimes B \rightarrow B\otimes A$, $a\otimes b \mapsto (-1)^{|a||b|} b\otimes a$. We check that $\sigma\circ \tau_{p+1,p+1} = (\tau^{\otimes p+1})\circ \sigma$. The left hand side has sign
$$\sgn_L=\left(\sum_{i=0}^p |a_i|\right) \left( \sum_{j=0}^p |b_j|\right) + \sum_{i=0}^{p-1} |a_i|\left( \sum_{j=i+1}^{p} |b_j| \right) = \sum_{i=0}^{p} |a_i| \left( \sum_{j=0}^i |b_j| \right)$$
and the right hand side has sign
$$\sgn_R=\sum_{i=0}^{p-1} |b_i|\left( \sum_{j=i+1}^{p} |a_j| \right) + \left(\sum_{i=0}^{p} |a_i||b_i|\right) = \sum_{i=0}^p |b_i| \left( \sum_{j=i}^p |a_j| \right)$$
Rearranging the order of summation shows that $\sgn_L=\sgn_R$, and we conclude that $B^{cy}$ is symmetric monoidal as claimed.
\end{proof}

\bigskip Before discussing the monoidality properties of the cyclic bar construction and the Hochschild complex, we recall some of the monoidality properties of the Dold-Kan equivalence.

\begin{defn} \cite[5.3]{em53}
Let $A$ and $B$ be simplicial $k$-modules. The {\em shuffle map} (also called the {\em Eilenberg-Zilber map}):
$$\sh_{A,B}\colon N(A)\otimes N(B)\rightarrow N(A\hat\otimes B)$$
is defined on elementary tensors $a\otimes b\in A_{p}\otimes B_{q}$ as
$$\sh_{A,B}(a\otimes b)=\sum_{\sigma\in \Sigma_{(p,q)}} \sgn(\sigma) s_{\sigma(p+q)}...s_{\sigma(p+1)}a \otimes s_{\sigma(p)}...s_{\sigma(1)}b$$
When there is no risk of confusion, we will omit $A$ and $B$ from the notation and simply write $\sh$ for the shuffle map.
\end{defn}

\begin{defn} \cite[2.9]{em54}
Let $A$ and $B$ be simplicial $k$-modules. The {\em Alexander-Whitney map}
$$AW_{A,B}\colon  N(A\hat\otimes B)\rightarrow N(A)\otimes N(B)$$
is defined on elementary tensors $a\otimes b\in A_{n}\otimes B_{n}$ as
$$AW_{A,B}\colon (a\otimes b)\mapsto \sum_{i=0}^n d_{i+1}... d_{n-1}d_{n} a \otimes (d_0)^{i} b$$
As with the shuffle map, we omit $A,B$ from the notation $AW_{A,B}$ when there is no risk of confusion.
\end{defn}

\begin{lem} \cite[Theorem 5.4]{em53} \label{lem-shuffle-combinatorics}
Let $A,B$ and $C$ simplicial $k$-modules. Then

$\bullet$ $\sh_{A\otimes B,C}\circ (\sh_{A,B}\otimes\id)=\sh_{A,B\otimes C}\circ(\id\otimes\sh_{B,C})$, i.e. the shuffle maps are associative.

$\bullet$ For $a\in A_p$ and $b\in B_q$ and $\tau$ denoting the twist morphism, we have $\sh_{A,B}(a\otimes b) = (-1)^{pq} \tau_* \sh_{B,A}(b\otimes a)\tau^{-1}$, i.e. the shuffle maps are graded symmetric.
\end{lem}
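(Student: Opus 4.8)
The plan is to derive both identities from the combinatorics of shuffle permutations, treating the unique block-decomposition of $(p,q,r)$-shuffles as the engine for associativity and a block-reversal bijection as the engine for symmetry; in both cases the conceptual content is light and the real work is the bookkeeping of degeneracy operators and Koszul signs. As a first step I would record the combinatorial sublemma that every $(p,q,r)$-shuffle $\theta$ (a permutation of $1,\dots,p+q+r$ increasing on each of the three blocks) factors uniquely as $\theta=\tau\sigma$ with $\sigma$ a $(p,q)$-shuffle fixing the last $r$ letters and $\tau$ a $(p+q,r)$-shuffle, and dually as a $(q,r)$-shuffle followed by a $(p,q+r)$-shuffle. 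The proof is the elementary erase/relabel argument: erasing the last $r$ entries of $\theta$ reads off $\sigma$, and relabelling the first $p+q$ entries by $1,\dots,p+q$ reads off $\tau$; uniqueness follows because a $(p+q,r)$-shuffle is determined by its values on the last $r$ positions. Since these are honest products of permutations, the signs are multiplicative, $\sgn(\theta)=\sgn(\tau)\sgn(\sigma)$.

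For associativity I would expand $\sh_{A\otimes B,C}\circ(\sh_{A,B}\otimes\id)$ on an elementary tensor $a\otimes b\otimes c$ with $a\in A_p$, $b\in B_q$, $c\in C_r$. The inner map yields a sum over $(p,q)$-shuffles $\sigma$ of degeneracy-laden tensors in $N(A\hat\otimes B)_{p+q}$, and applying the outer map over $(p+q,r)$-shuffles $\tau$ produces iterated degeneracy strings. The crucial point is that the degeneracies coming from $\tau$, when pushed past those from $\sigma$ using the simplicial identity $s_is_j=s_{j+1}s_i$ for $i\le j$, rewrite exactly into the degeneracy strings attached to the composite shuffle $\theta=\tau\sigma$ in each tensor slot, while the signs collapse to $\sgn(\theta)$ by multiplicativity. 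Performing the same expansion on $\sh_{A,B\otimes C}\circ(\id\otimes\sh_{B,C})$ via the dual decomposition $\Sigma_{(p,q+r)}\Sigma_{p,(q,r)}$, both sides reduce to the single sum indexed by all $(p,q,r)$-shuffles, and the identity follows term by term.

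For graded symmetry I would exhibit the bijection between $(p,q)$-shuffles and $(q,p)$-shuffles given by interchanging the two blocks, i.e.\ conjugating by the block transposition that carries the first $p$ letters past the last $q$. Under this bijection the degeneracy string applied to $a$ is interchanged with the one applied to $b$, which is precisely what the output twist $\tau_*\colon N(B\hat\otimes A)\to N(A\hat\otimes B)$ and the input twist $\tau$ record; the scalar $(-1)^{pq}$ is the Koszul sign for commuting $a\in A_p$ past $b\in B_q$, and one checks that $\sgn$ of a shuffle and of its block-reversal differ by exactly this factor. Matching the two sums term by term then gives $\sh_{A,B}(a\otimes b)=(-1)^{pq}\tau_*\,\sh_{B,A}(b\otimes a)\,\tau^{-1}$.

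The main obstacle in both parts is the sign and degeneracy bookkeeping rather than any conceptual subtlety: I must verify that reducing a composite of two degeneracy strings to normal form via the simplicial identities reproduces exactly the degeneracy string of the composite shuffle, and that the Koszul signs arising from the graded tensor product together with the sign picked up in reordering the simplicial operators combine to $\sgn(\theta)$ with no residual discrepancy. I would tame this by fixing once and for all a normal form for degeneracy operators (indices strictly decreasing read from the outside in), checking the reduction on generators, and only then comparing the two sides over their common index set of $(p,q,r)$-shuffles. This is of course the classical computation of Eilenberg--MacLane, and for a published account one may simply cite \cite[Theorem 5.4]{em53}.
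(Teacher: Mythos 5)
Your proposal is correct; the main point of comparison is that the paper does not prove this lemma at all — it is stated with a citation to Eilenberg--MacLane \cite[Theorem 5.4]{em53}, which is also the fallback you offer in your closing sentence. Your sketch is a faithful reconstruction of the classical argument behind that citation: the unique factorizations $\Sigma_{(p,q,r)} = \Sigma_{(p+q,r)}\,\Sigma_{(p,q),r} = \Sigma_{(p,q+r)}\,\Sigma_{p,(q,r)}$, with multiplicative signs, reduce both iterated shuffles to a single sum over $(p,q,r)$-shuffles; and the block-transposition bijection $\Sigma_{(p,q)}\cong\Sigma_{(q,p)}$, whose sign is exactly $(-1)^{pq}$ and which interchanges the two degeneracy strings, gives graded symmetry. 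Your reading of the slightly abusive expression $\tau_*\,\sh_{B,A}(b\otimes a)\,\tau^{-1}$ as conjugation by the twist maps is also the intended one. It is worth noting that your combinatorial sublemma — including the erase/relabel construction of the factorization and the uniqueness argument via the values of a $(p+q,r)$-shuffle on the last $r$ letters — appears essentially verbatim as a commented-out lemma in the paper's LaTeX source, so the author evidently drafted exactly your route before settling on the citation. What your route buys is self-containedness, at the cost of the degeneracy and sign bookkeeping you rightly flag as the real labor (reducing composite degeneracy strings to normal form via $s_is_j=s_{j+1}s_i$ for $i\le j$, equivalently the identification of degeneracy composites with surjections in the simplex category); what the citation buys is economy, and since the result is genuinely classical, citing it — as both you and the paper ultimately do — is a defensible choice.
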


\begin{lem} \cite[Theorem 2.1]{em54}
The shuffle and Alexander-Whitney maps are mutually inverse natural homotopy equivalences.
\end{lem}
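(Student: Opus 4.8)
The plan is to prove this by the method of acyclic models rather than by a head-on manipulation of shuffles and faces. First I would package the two sides as bifunctors
$$F,\,G\colon \sMod_k\times\sMod_k \rightarrow \Ch_k, \qquad F(A,B)=N(A\hat\otimes B),\quad G(A,B)=N(A)\otimes N(B),$$
and note that $\sh$ and $AW$ are natural transformations $G\Rightarrow F$ and $F\Rightarrow G$ which in simplicial degree $0$ both restrict to the identity of $A_0\otimes B_0$ (there are no simplices to shuffle and no faces to apply). Since both functors are naturally $A_0\otimes B_0$ in degree $0$, the two composites $AW\circ\sh$ and $\sh\circ AW$ induce the identity on $H_0$, and the goal reduces to the natural homotopy identities $AW\circ\sh\simeq\id_G$ and $\sh\circ AW\simeq\id_F$.

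Next I would check the two hypotheses of acyclic models with respect to the models $M_{p,q}=(k[\Delta^p],k[\Delta^q])$, where $k[\Delta^n]$ is the free simplicial $k$-module on the standard $n$-simplex. For freeness, a map $k[\Delta^n]\to A$ is precisely an $n$-simplex of $A$, so $G(-,-)_n=\bigoplus_{p+q=n}N(-)_p\otimes N(-)_q$ is generated by the fundamental classes $\iota_p\otimes\iota_q\in G(M_{p,q})_n$, exhibiting $G$ as free on the models $\{M_{p,q}\}$, while $F(-,-)_n$ is generated by the single class $\iota_n\otimes\iota_n\in F(M_{n,n})_n$, exhibiting $F$ as free on the diagonal models $\{M_{n,n}\}$. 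For acyclicity I would use that $\Delta^p\times\Delta^q$ is contractible: $F(M_{p,q})=N\big(k[\Delta^p\times\Delta^q]\big)$ and, by the K\"unneth theorem, $G(M_{p,q})=N(k[\Delta^p])\otimes N(k[\Delta^q])$ both have homology $k$ concentrated in degree $0$, compatibly with the degree-zero identification above.

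With freeness and acyclicity established, the acyclic models lemma yields that any natural chain map between these functors inducing the identity on $H_0$ exists and is unique up to natural chain homotopy. Taking both source and target to be the free acyclic functor $G$ gives $AW\circ\sh\simeq\id_G$, and taking both to be $F$ gives $\sh\circ AW\simeq\id_F$; naturality of the homotopies is part of the conclusion. In fact a short computation on normalized chains upgrades the first to a strict equality $AW\circ\sh=\id_G$: applying $AW$ to $\sh(a\otimes b)$, every shuffle summand except the one indexed by the order-preserving splitting produces a degenerate simplex in one tensor factor and hence dies after normalization. Together these say that $\sh$ and $AW$ are mutually inverse natural homotopy equivalences.

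The main obstacle will be organizational rather than a single hard estimate: one must pin down the correct models for each functor---all bidegrees $M_{p,q}$ for $G$ but only the diagonal $M_{n,n}$ for $F$---and verify that the freeness generators and the degree-zero comparison are genuinely compatible, so that a single isomorphism on $H_0$ governs both composites. A secondary point needing care is that everything is phrased for the normalized complexes, where one must check that $\sh$ descends and that degenerate terms vanish; this is precisely what makes one composite a strict identity. If one prefers to sidestep the normalization bookkeeping inside acyclic models, one may instead run the argument on the unnormalized Moore complexes, where freeness on the single models $k[\Delta^n]$ is transparent, and then transfer the conclusion along the natural chain-homotopy equivalence between the unnormalized and normalized complexes.
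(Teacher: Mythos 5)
The paper does not actually prove this lemma: it is quoted from \cite[Theorem 2.1]{em54} with no argument given, so your proposal must be measured against the classical literature rather than against anything internal to the paper. Measured that way, your acyclic-models argument is in substance the classical proof of the cited theorem (the Eilenberg--Zilber theorem), and its architecture is correct: the two bifunctors $F(A,B)=N(A\hat\otimes B)$ and $G(A,B)=N(A)\otimes N(B)$; the models $M_{p,q}=(k[\Delta^p],k[\Delta^q])$, with all bidegrees serving $G$ and only the diagonal ones serving $F$; acyclicity of each functor on its own models (contractibility of $\Delta^p\times\Delta^q$, respectively K\"unneth applied to complexes of free $k$-modules); the degree-zero identification forcing both composites to induce the identity on $H_0$; and the uniqueness-up-to-natural-homotopy conclusion. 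Your sharpening that $AW\circ\sh$ is strictly the identity on normalized chains is also correct, and is exactly what the paper records in the summary immediately following the lemma.

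The one step you state too loosely is freeness, and as written it is false. In the Eilenberg--Mac Lane form of acyclic models, ``free on models'' requires that the images of the fundamental classes form a \emph{basis} of the value of the functor on every object, not merely a generating set. That holds for simplicial sets, but over $\sMod_k\times\sMod_k$ it fails: the images of $\iota_p\otimes\iota_q$ under all maps $(k[\Delta^p],k[\Delta^q])\to(A,B)$ are precisely the elementary tensors $a\otimes b\in A_p\otimes B_q$, which span but satisfy many linear relations. Your fallback to unnormalized complexes does not cure this, because the problem is linearization, not normalization. Two standard repairs keep your argument intact: (i) use the $k$-linear refinement of acyclic models, observing that a natural transformation out of $\Hom(k[\Delta^p],-)\otimes\Hom(k[\Delta^q],-)$ into a functor additive in each variable is freely determined by its value $t$ on $\iota_p\otimes\iota_q$, since the extension formula $a\otimes b\mapsto T(\phi_a,\psi_b)(t)$ is well defined precisely because it is bilinear in $(a,b)$; or (ii) run your argument verbatim for simplicial sets, where freeness is genuine, and transfer the resulting maps and homotopies to all simplicial $k$-modules, noting that by the Yoneda argument they are universal integral combinations of face and degeneracy operators, and that identities among such operator expressions verified on the models hold identically. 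With either patch, together with the normalization bookkeeping you already address (for instance via the natural splitting of $A_p$ into normalized and degenerate summands), the proof is complete.
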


\begin{lem} \cite[Corollary 2.2]{em54} \label{lem-AW-associative}
The Alexander-Whitney map is associative, i.e.~for $A,B$ and $C$ simplicial $k$-modules, the morphisms $(\id\otimes AW_{B,C})\circ AW_{A,B\hat{\otimes}C}$ and $(AW_{A,B}\otimes \id)\circ AW_{A\hat{\otimes} B,C}$ from $N(A\hat{\otimes} B\hat{\otimes} C)$ to $N(A)\otimes N(B)\otimes N(C)$ are equal.
\end{lem}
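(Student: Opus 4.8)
The plan is to evaluate both composites on an elementary tensor and match them term by term. Since $\hat\otimes$ is the levelwise tensor product, a class in $N(A\hat\otimes B\hat\otimes C)$ of simplicial degree $n$ is represented by $a\otimes b\otimes c$ with $a\in A_n$, $b\in B_n$, $c\in C_n$ in the common degree $n$, and I would compute both composites on such a representative, using the definition of $AW$ repeatedly together with the diagonal action of the face maps, $d_\ell(x\otimes y)=d_\ell x\otimes d_\ell y$, on $\hat\otimes$.

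Expanding $(AW_{A,B}\otimes\id)\circ AW_{A\hat\otimes B,C}$, the outer map produces $\sum_{i=0}^n (d_{i+1}\cdots d_n a\otimes d_{i+1}\cdots d_n b)\otimes (d_0)^i c$, and then the inner $AW_{A,B}$, applied in degree $i$ and simplified by $d_{j+1}\cdots d_i\,d_{i+1}\cdots d_n=d_{j+1}\cdots d_n$, gives
\[ \sum_{0\le j\le i\le n} d_{j+1}\cdots d_n a\otimes (d_0)^j d_{i+1}\cdots d_n b\otimes (d_0)^i c. \]
Symmetrically, $(\id\otimes AW_{B,C})\circ AW_{A,B\hat\otimes C}$ expands to
\[ \sum_{\substack{p,q\ge 0\\ p+q\le n}} d_{p+1}\cdots d_n a\otimes d_{q+1}\cdots d_{n-p}(d_0)^p b\otimes (d_0)^{p+q} c. \]
Here the $A$-factor is a front face, the $C$-factor a back face, and the $B$-factor a middle face. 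Matching the front factors forces $p=j$ and matching the back factors forces $p+q=i$, so $(i,j)\leftrightarrow(p,q)=(j,i-j)$ is a bijection between the two triangular index sets.

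Under this bijection the $A$- and $C$-factors agree by construction, so the whole statement reduces to the equality of the two middle $B$-factors, i.e.\ to the operator identity
\[ (d_0)^j d_{i+1}\cdots d_n = d_{i-j+1}\cdots d_{n-j}(d_0)^j. \]
Both strings delete every vertex outside the interval $\{j,\dots,i\}$, and I would prove the identity by commuting the $j$ copies of $d_0$ to the right one at a time using the simplicial relation $d_s d_t=d_{t-1}d_s$ for $s<t$. This combinatorial bookkeeping --- checking that the prescribed strings of faces compose to the same interval face --- is the only real content and the step most prone to indexing errors; by contrast there are no signs to reconcile, since $AW$ is sign-free. Finally, since each $AW$ is already defined on normalized complexes, the equality of the two formulas on representatives descends to the asserted equality of maps $N(A\hat\otimes B\hat\otimes C)\to N(A)\otimes N(B)\otimes N(C)$, completing the argument.
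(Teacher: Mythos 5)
Your proposal is correct and is essentially the paper's own proof: both evaluate the two composites on an elementary tensor $a\otimes b\otimes c$ in degree $n$, simplify using $d_{j+1}\cdots d_i\, d_{i+1}\cdots d_n = d_{j+1}\cdots d_n$ and the simplicial identity $d_s d_t = d_{t-1}d_s$ (for $s<t$) to move the $(d_0)^j$ past the back faces, and then match the two triangular index sets via the same bijection $(i,j)\leftrightarrow (p,q)=(j,i-j)$ (the paper writes this as $(q,t)=(p+s,p)$). The operator identity $(d_0)^j d_{i+1}\cdots d_n = d_{i-j+1}\cdots d_{n-j}(d_0)^j$ you isolate is exactly the paper's observation $d_0^t\,\tilde d^n_q = \tilde d^{\,n-t}_{q-t}\, d_0^t$, so no gap remains.
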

\begin{proof}
Let $a\otimes b \otimes c \in A_n\otimes B_n\otimes C_n$. For brevity, we write $\tilde{d}^n_i=d_{i+1}... d_{n-1}d_{n}$. Then the two compositions
$$N(A\hat{\otimes} B\hat{\otimes} C)\rightarrow N(A)\otimes N(B)\otimes N(C)$$
are
$$(a\otimes b \otimes c) \xmapsto{AW_{A,B\hat{\otimes}C}} \sum_{p=0}^n \tilde{d}^n_p a \otimes d_0^p b\otimes d_0^p c$$
$$\xmapsto{\id\otimes AW_{B,C}} \sum_{p=0}^n \sum_{s=0}^{n-p} \tilde{d}^n_p a \otimes \tilde{d}^{n-p}_s d_0^{p} b \otimes d_0^{p+s} c$$
and
$$(a\otimes b \otimes c) \xmapsto{AW_{A\hat{\otimes} B,C}} \sum_{q=0}^{n} \tilde{d}^n_q a \otimes \tilde{d}^n_q b \otimes d_0^{q} c$$
$$\xmapsto{AW_{B,C}\otimes \id} \sum_{q=0}^n \sum_{t=0}^q \tilde{d}^{q}_t \tilde{d}^n_q a \otimes d_0^{t} \tilde{d}^n_q b \otimes d_0^{q}c.$$
Note that $\tilde{d}^q_t\tilde{d}^n_q=\tilde{d}^n_t$. Using the simplicial identity $d_id_j=d_{j-1}d_i$ when $i<j$, observe that $d_0^{t} \tilde{d}^n_q = \tilde{d}^{n-t}_{q-t} d_0^{t}$. Writing $(q,t)=(p+s,p)$, we now see that the two expressions are equal.
\end{proof}

\bigskip To summarize the above theorems, the shuffle and Alexander-Whitney maps are mutually inverse quasi-isomorphisms. In particular, $AW\circ \sh=\id$ and $\sh\circ AW\simeq \id$. The shuffle map is a lax symmetric monoidal transformation witnessing that the normalized Moore complex functor $N\colon \sMod_k\rightarrow \Ch_k$, and hence also the Hochschild chains functor $C\colon \sAlg_k\rightarrow \Ch_k$ is lax symmetric monoidal. The Alexander-Whitney map is an oplax monoidal transformation witnessing that $N$, and hence $C$ is oplax monoidal. However, the Alexander-Whitney map is {\em not} symmetric. Still, it is $\mathbb{E}_\infty$ in the following sense (see Lemma \ref{lem-AW-E-infty}).

\begin{defn} (\cite[p.552]{richter00})
A functor $F\colon \C\rightarrow \D$ between symmetric monoidal categories is \emph{$\mathbb{E}_\infty$-monoidal} if there is an $\mathbb{E}_\infty$ operad $\mathcal{O}$ in $\D$ and maps
$$\mu_n\colon \mathcal{O}(n)\otimes \left(F(A_1)\otimes ... \otimes F(A_n)\right)\rightarrow F(A_1\otimes ... \otimes A_n)$$
such that
\begin{enumerate}
\item the action is unital, i.e.~if $I$ denotes the monoidal unit of $\D$ and $\eta\colon I\rightarrow \mathcal{O}(1)$ is the unit of the operad, then the following diagram commutes:
\tri{I\otimes F(A),\mathcal{O}(1)\otimes F(A), F(A),\eta\otimes \id,\mu_1,\simeq}
\item The action is equivariant: for each $\sigma\in\Sigma_n$, the action $\mu_n$ is compatible with the action of $\Sigma_n$ on $\mathcal{O}(n)$ and by permuting indices of the $A_i$. I.e.~the following diagram commutes:
\end{enumerate}
\begin{center}
\begin{tikzpicture}
  \matrix (m) [matrix of math nodes,row sep=3em,column sep=2em,minimum width=2em,
  text height=1.5ex, text depth=0.25ex]
  {
     \mathcal{O}(n)\otimes F(A_1)\otimes ... \otimes F(A_n) & F(A_1\otimes ... \otimes A_n) \\
      \mathcal{O}(n)\otimes F(A_{\sigma^{-1}(1)})\otimes ... \otimes F(A_{\sigma^{-1}(n)}) & F(A_{\sigma^{-1}(1)}\otimes ... \otimes A_{\sigma^{-1}(n)}) \\};
  \path[-stealth]
    (m-1-1) edge node [auto] {$\mu_n$} (m-1-2)
    		edge node [auto] {$\sigma\otimes \sigma$} (m-2-1)
    (m-1-2) edge node [auto] {$F(\sigma)$} (m-2-2)
    (m-2-1) edge node [auto] {$\mu_n$} (m-2-2);
\end{tikzpicture}
\end{center}
\begin{enumerate}
\setcounter{enumi}{2}
\item The action is associative, i.e.~is compatible with the operad multiplication.
\end{enumerate}
$\mathbb{E}_\infty$-comonoidal functors are similarly defined by using structure maps
$$\nu_n\colon \mathcal{O}(n)\otimes F(A_1\otimes ... \otimes A_n)\rightarrow F(A_1)\otimes ... \otimes F(A_n).$$
\end{defn}

We will now define chain complexes which assemble into a dg-operad (and later a symmetric monoidal dg-category) witnessing that $AW$ is an $\mathbb{E}_{\infty}$-comonoidal transformation. 

\begin{defn}
Define the functors
$$N^{\hat{\otimes}n},N^{\otimes n}\colon \sMod_k^{\times n}\rightarrow \Ch_k$$
given by
$$N^{\hat{\otimes}n}(A_1,...,A_n)=N(A_1\hat{\otimes}...\hat{\otimes} A_n)$$
$$N^{\otimes n}(A_1,...,A_n)=N(A_1)\otimes ... \otimes N(A_n)$$
and let
$$\mathcal{O}(n):=\Nat_{\sMod_k^{\times n}}(N^{\hat{\otimes}n},N^{\otimes n})$$
\end{defn}

\begin{notn} \label{rmk-N-2cat}
Since the elements of $\mathcal{O}(n)$, and of the complex $\tilde{N}^{\Sigma}((n),(n))$ which we define below, are natural transformations, we can in particular view them as 2-morphisms in the 2-category of categories, and so 2-categorical constructions, like horizontal composition, can be applied to them. For a 4-tuple of morphisms $f,f'\colon a\rightarrow b$ and $g,g'\colon b\rightarrow c$ and a pair of 2-morphisms $\alpha\colon  f\rightarrow f'$ and $\beta\colon g\rightarrow g'$, we write $\beta * \alpha$ for their horizontal composition $\beta*\alpha\colon  gf\rightarrow g'f'$.
\end{notn}

\begin{lem} \label{lem-AW-E-infty}
The complexes $\mathcal{O}(n)$ assemble into an $\mathbb{E}_{\infty}$ operad witnessing that $N^{\otimes n}$ and $N^{\hat{\otimes n}}$ are $\mathbb{E}_\infty$-comonoidal functors and that $AW\colon N^{\otimes 2}\rightarrow N^{\hat{\otimes}2}$ is an $\mathbb{E}_\infty$-comonoidal transformation.
\end{lem}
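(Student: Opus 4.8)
The plan is to build the operad structure directly from the $2$-categorical structure on natural transformations recorded in Notation~\ref{rmk-N-2cat}, and then to establish $\mathbb{E}_\infty$-ness by a density reduction followed by acyclic models. The unit is the identity transformation $\id\in \mathcal{O}(1)=\Nat(N,N)$. The group $\Sigma_n$ acts on $\mathcal{O}(n)$ by permuting input variables, conjugating a given transformation by the symmetry isomorphisms of $\hat{\otimes}$ and of $\otimes$; one checks this is compatible on both sides. For the composition $\gamma\colon \mathcal{O}(n)\otimes\mathcal{O}(m_1)\otimes\cdots\otimes\mathcal{O}(m_n)\to\mathcal{O}(m_1+\cdots+m_n)$, given $\alpha\in\mathcal{O}(n)$ and $\beta_i\in\mathcal{O}(m_i)$ I would group the $m=\sum m_i$ inputs into $n$ blocks, set $A_i=A_{i,1}\hat{\otimes}\cdots\hat{\otimes}A_{i,m_i}$, and form the composite $N(A_1\hat{\otimes}\cdots\hat{\otimes}A_n)\xrightarrow{\alpha}\bigotimes_i N(A_i)\xrightarrow{\beta_1\otimes\cdots\otimes\beta_n} N^{\otimes m}$. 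This is exactly the horizontal composite $(\beta_1\otimes\cdots\otimes\beta_n)*\alpha$, so associativity, unitality and equivariance of $\gamma$ follow from the corresponding properties of horizontal composition of $2$-morphisms.

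The comonoidal structure maps $\nu_n\colon \mathcal{O}(n)\otimes N^{\hat{\otimes}n}(A_1,\ldots,A_n)\to N^{\otimes n}(A_1,\ldots,A_n)$ are the evaluation maps $\alpha\otimes x\mapsto \alpha_{(A_1,\ldots,A_n)}(x)$, which are chain maps by the definition of the differential on $\mathcal{O}(n)$. The three axioms (unitality, equivariance, associativity) in the definition of an $\mathbb{E}_\infty$-comonoidal functor are then formal: they follow from the compatibility of evaluation with horizontal composition and with the symmetry isomorphisms. That $AW$ is $\mathbb{E}_\infty$-comonoidal is witnessed by the iterated Alexander--Whitney transformation $AW^{(n)}\in\mathcal{O}(n)_0$, which is a well-defined and coherent element thanks to the associativity established in Lemma~\ref{lem-AW-associative}.

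The main work is to show $\mathcal{O}$ is an $\mathbb{E}_\infty$ operad, i.e.~that each $\mathcal{O}(n)$ is quasi-isomorphic to $k$ with a suitably free $\Sigma_n$-action. Since $N^{\hat{\otimes}n}$ is colimit-preserving in each variable and the free simplicial modules $k[\Delta^p]$ are dense in $\sMod_k$, the source functor is the left Kan extension of its restriction to tuples of representables, so by the universal property of Kan extension $\mathcal{O}(n)\cong\Nat_{\Delta^{\times n}}(N^{\hat{\otimes}n}|_{rep},N^{\otimes n}|_{rep})$. On representables these functors become $\vec{p}\mapsto C_*(\Delta^{p_1}\times\cdots\times\Delta^{p_n})$ and $\vec{p}\mapsto C_*(\Delta^{p_1})\otimes\cdots\otimes C_*(\Delta^{p_n})$, both consisting of non-negatively graded free $k$-modules and both acyclic since products of simplices are contractible. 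The method of acyclic models then identifies $H_*(\mathcal{O}(n))$ with the natural transformations of the constant functor $\vec{p}\mapsto k$, which is $k$ concentrated in degree $0$, with the class of $AW^{(n)}$ generating $H_0$.

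I expect the last step to be the main obstacle. One must verify that the hypotheses of acyclic models hold uniformly across all $n$ variables simultaneously (checking that the source is genuinely free-with-models and the target acyclic in the multivariable sense), and then extract, from the $\Sigma_n$-equivariant structure on the model complexes $C_*(\prod_i\Delta^{p_i})$, the freeness of the $\Sigma_n$-action on $\mathcal{O}(n)$ that genuine $\mathbb{E}_\infty$-ness requires. The non-symmetry of $AW$ means this freeness is invisible from the degree-$0$ generator alone and must be read off from the full complex of natural operations; this is the subtle point, and handling it rather than the contractibility per se is where the real care is needed.
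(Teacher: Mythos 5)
Your construction is the paper's construction. The paper defines the operadic composition by $(\phi,\gamma_1,\ldots,\gamma_i)\mapsto\phi\circ(\gamma_1*\cdots*\gamma_i)$, i.e.\ blockwise application of $\phi$ followed by the $\gamma_j$'s, which is exactly your $(\beta_1\otimes\cdots\otimes\beta_n)*\alpha$; it takes the $\Sigma_n$-action to be conjugation, and it obtains the comonoidal structure maps and the statement about $AW$ from the formal framework of \cite{richter00} and \cite{richter03}, just as you do via evaluation and the coherent element $AW^{(n)}$. The only substantive input the paper does not spell out is the acyclicity of $\mathcal{O}(n)$ with $H_0\cong k$, for which it cites \cite[Satz 1.6]{dold61}; your density-plus-acyclic-models argument is precisely the proof of that cited result (reduce along the Kan extension to tuples of representables, where both functors are free and acyclic). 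Up to this point your proposal is correct and essentially identical to the paper's proof, with the citation unpacked.

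Where you go astray is the final paragraph: the $\Sigma_n$-freeness you declare to be ``the main obstacle'' is neither required nor true. In the sense used by \cite{richter00} and \cite{richter03} --- which is what the paper's definition of $\mathbb{E}_\infty$-(co)monoidal functor invokes --- an $\mathbb{E}_\infty$ operad is one whose components are acyclic with $H_0\cong k$ carrying the trivial $\Sigma_n$-action, and the paper's proof accordingly ends once acyclicity is established. Moreover this is not a refinement one could supply afterwards, because the conjugation action is genuinely not free: over $k=\mathbb{F}_2$, the element $\psi_0\in\mathcal{O}(2)_0$ given by the canonical identification $N(A\hat{\otimes}B)_0=A_0\otimes B_0=(N(A)\otimes N(B))_0$ in degree $0$ and by zero in positive degrees is $\Sigma_2$-fixed, while evaluating at $(k[\Delta^0],k[\Delta^0])$ and reading off the coefficient of the generator sends $\psi_0$ to $1$ and sends every norm element $\phi+\tau\phi\tau$ to $0$; since in a free $\mathbb{F}_2[\Sigma_2]$-module every fixed element is a norm, $\mathcal{O}(2)_0$ is not free (equivalently not projective, $\mathbb{F}_2[\Sigma_2]$ being local). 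Note also that $\mathcal{O}(n)$ has nonzero components in negative degrees, so it cannot be a $\Sigma_n$-projective resolution of $k$ in the May-style sense; when that stronger notion is needed one does not prove it for this operad but replaces it, e.g.\ by a $\Sigma$-free operad mapping to it (Barratt--Eccles). So delete your last paragraph, record that the definition in play asks only for levelwise acyclicity with $H_0=k$ and trivial action on $H_0$ (which your acyclic-models computation already gives), and your proof is complete.
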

\begin{proof}
Let $n_1+...+n_i=n$ be natural numbers. The operad structure on $\mathcal{O}$ is given by the maps
$$\mathcal{O}(i)\otimes \left(\mathcal{O}(n_1) \otimes ... \otimes \mathcal{O}(n_i) \right) \rightarrow \mathcal{O}(n)$$
given by $(\phi, \gamma_1 , ..., \gamma_i)\mapsto \phi \circ (\gamma_1 * ... * \gamma_i)$. The $\Sigma_n$-action is given by conjugation, i.e. for $\chi\in \Sigma_n$ and $\psi\in \mathcal{O}(n)$ we have $\chi\cdot \psi = \chi\circ \psi \circ \chi^{-1}$. It is known (see \cite[Satz 1.6]{dold61}) that the complex of natural transformations 
$$\mathcal{O}(n)=\Nat_{\sMod_k^{\times n}}(N^{\hat{\otimes} n},N^{\otimes n})$$
is acyclic with zero'th homology $k$. It follows (see \cite[Section 7]{richter00} and \cite[Section 5]{richter03}) that the functors $N$ is an $\mathbb{E}_\infty$-comonoidal functor and that $AW\colon N^{\otimes 2}\rightarrow N^{\hat{\otimes}2}$ is an $\mathbb{E}_\infty$-comonoidal transformation.
\end{proof}

We will look at the complex
$$\widetilde{N}((n),(n)):=\Nat_{\sMod_k^{\times n}}(N^{\hat{\otimes} n},N^{\hat{\otimes} n})$$
which is homotopy equivalent to $\Nat_{\sMod_k^{\times n}}(N^{\hat{\otimes} n},N^{\otimes n})$, seen by post-composing with shuffle and Alexander-Whitney maps, but with the difference that maps in $\tilde{N}((n),(n))$ may be composed, giving rise to an algebra structure. In the rest of this section, we will construct a dg-category with morphism complexes built from $\Nat_{\sMod_k^{\times n}}(N^{\hat{\otimes} n},N^{\hat{\otimes} n})$, and the notation is chosen with this in mind.

\begin{defn}
The symmetric group $\Sigma_n$ acts on $\sMod_k^{\times n}$ by
$$\chi(A_1,...,A_n)=(A_{\chi^{-1}(1)},...,A_{\chi^{-1}(n)}).$$
Let $\tilde{N}^{\Sigma}((n),(n))$ be the complex
$$\tilde{N}^{\Sigma}((n),(n)) = \bigoplus_{\chi\in \Sigma_n} \Nat_{\sMod^{\times n}}(N^{\hat{\otimes} n},N^{\hat{\otimes} n}\circ \chi)=: \bigoplus_{\chi\in \Sigma_n} \widetilde{N}^{\Sigma}_\chi((n),(n)).$$
\end{defn}

\begin{lem} \label{lem-tildeN-contract}
The chain complex $\tilde{N}^\Sigma((n),(n))$ admits a $\Sigma_n$-graded algebra structure and contracts to $k\Sigma_n$ in degree $0$.
\end{lem}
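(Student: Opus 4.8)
The plan is to obtain the graded algebra structure from (whiskered) vertical composition of natural transformations, and the contraction by transporting the acyclicity of $\mathcal{O}(n)$ across the shuffle and Alexander--Whitney equivalences. Note first that the direct-sum decomposition $\tilde{N}^{\Sigma}((n),(n))=\bigoplus_{\chi}\widetilde{N}^{\Sigma}_\chi((n),(n))$ is a decomposition by subcomplexes, since the commutator differential on each $\Nat$-complex preserves $\chi$, so I can treat the two claims and the individual summands separately.

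For the algebra structure, given $f\in\widetilde{N}^{\Sigma}_\chi((n),(n))$ and $g\in\widetilde{N}^{\Sigma}_{\chi'}((n),(n))$, viewed as natural transformations $f\colon N^{\hat\otimes n}\Rightarrow N^{\hat\otimes n}\circ\chi$ and $g\colon N^{\hat\otimes n}\Rightarrow N^{\hat\otimes n}\circ\chi'$, I would define $f\cdot g$ to be the vertical composite $(f\ast\chi')\circ g$, where $f\ast\chi'$ denotes the whiskering of $f$ by the functor $\chi'$ in the sense of Notation \ref{rmk-N-2cat}. Since $(N^{\hat\otimes n}\circ\chi)\circ\chi'=N^{\hat\otimes n}\circ(\chi\chi')$ as functors (the permutation prescription being a left action), this composite lies in $\widetilde{N}^{\Sigma}_{\chi\chi'}$, so the product respects the $\Sigma_n$-grading, with unit $\id_{N^{\hat\otimes n}}\in\widetilde{N}^{\Sigma}_{\id}$. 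Associativity is the interchange law in the $2$-category of categories, and compatibility with the differential is the Leibniz rule for the commutator differential on $\Nat$-complexes (whiskering by the degree-zero functor $\chi'$ preserves it), so this is a $\Sigma_n$-graded dg-algebra.

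For the contraction, I would show each summand $\widetilde{N}^{\Sigma}_\chi=\Nat(N^{\hat\otimes n},N^{\hat\otimes n}\circ\chi)$ is chain homotopy equivalent to $\mathcal{O}(n)=\Nat(N^{\hat\otimes n},N^{\otimes n})$. First, post-composition with the whiskered maps $AW\ast\chi\colon N^{\hat\otimes n}\circ\chi\Rightarrow N^{\otimes n}\circ\chi$ and $\sh\ast\chi$ are mutually homotopy-inverse chain maps between $\Nat(N^{\hat\otimes n},N^{\hat\otimes n}\circ\chi)$ and $\Nat(N^{\hat\otimes n},N^{\otimes n}\circ\chi)$, using that the iterated $AW$ and $\sh$ (unambiguous by the associativity Lemmas \ref{lem-shuffle-combinatorics} and \ref{lem-AW-associative}) satisfy $AW\circ\sh=\id$ and $\sh\circ AW\simeq\id$. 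Second, the symmetry of $(\Ch_k,\otimes)$ reordering the tensor factors gives a natural isomorphism $N^{\otimes n}\circ\chi\cong N^{\otimes n}$, hence by post-composition a chain isomorphism $\Nat(N^{\hat\otimes n},N^{\otimes n}\circ\chi)\cong\mathcal{O}(n)$. By the acyclicity of $\mathcal{O}(n)$ cited in Lemma \ref{lem-AW-E-infty} (\cite[Satz 1.6]{dold61}), together with Dold's explicit contraction, $\mathcal{O}(n)$ contracts onto $k$ in degree $0$; transporting this through the above equivalences contracts each $\widetilde{N}^{\Sigma}_\chi$ onto $k$, the degree-$0$ class being represented by the transformation $N^{\hat\otimes n}\Rightarrow N^{\hat\otimes n}\circ\chi$ induced by the symmetry of $\hat\otimes$ that permutes the factors by $\chi$. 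Summing over $\chi\in\Sigma_n$ yields a contraction of $\tilde{N}^{\Sigma}((n),(n))$ onto $\bigoplus_\chi k$ in degree $0$, and under the product above these symmetry classes multiply by $\chi,\chi'\mapsto\chi\chi'$, identifying the degree-$0$ part with the group algebra $k\Sigma_n$.

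The bulk of the argument is $2$-categorical and sign bookkeeping; the one imported nontrivial ingredient is the acyclicity of $\mathcal{O}(n)$, which I would take from Dold rather than reprove. The main thing to get right by hand is the interaction of the contraction with the multiplication: I must insert the symmetry isomorphism $N^{\otimes n}\circ\chi\cong N^{\otimes n}$ with the correct Koszul signs so that the identification of $H_0$ is multiplicative, giving $k\Sigma_n$ as an algebra (with the symmetry transformations as grouplike generators) rather than merely a free module of rank $n!$; and I should check that what one obtains is an honest deformation retraction onto degree $0$, not just a computation of homology.
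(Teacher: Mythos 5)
Your proposal is correct and follows essentially the same route as the paper: the $\Sigma_n$-graded product via whiskering/horizontal composition of the transformations by the permutation functors (landing in the $\chi\chi'$-component), and the contraction by transporting Dold's acyclicity of $\mathcal{O}(n)$ across the shuffle/Alexander--Whitney homotopy equivalence, identifying each summand $\widetilde{N}^{\Sigma}_\chi$ with a complex contracting onto the symmetry class of $\chi$. The only cosmetic difference is the order of reductions (the paper first identifies $\widetilde{N}^{\Sigma}_\chi\cong\widetilde{N}((n),(n))$ by conjugating with $\chi^{\pm1}$ and then contracts; you whisker $\sh$ and $AW$ first and remove $\chi$ afterwards), and your explicit check that the degree-zero identification is multiplicative, giving $k\Sigma_n$ as an algebra, makes precise a point the paper leaves implicit.
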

\begin{proof}
Let $f\in \tilde{N}^\Sigma_{\chi}((n),(n))$ and $g\in \tilde{N}^\Sigma_{\chi'}((n),(n))$. We treat $f$ and $g$ as 2-morphisms in the 2-category of dg-categories as in Remark \ref{rmk-N-2cat}. The product of $g$ and $f$ is given by $(g*\id_{\chi})\circ f\colon N^{\hat{\otimes n}}\rightarrow N^{\hat{\otimes n}}\circ (\chi'\chi)$, which may also be visualized by the pasting diagram
\begin{center}
\begin{tikzpicture}
\node (A) at (-2,0) {$\sMod_k^{\times n}$};
\node (B) at (2,-2) {$\Ch_k$};
\node (C) at (-2,-2) {$\sMod_k^{\times n}$};
\node (D) at (-2,-4) {$\sMod_k^{\times n}$};
\node at (-1,-1.3) {\rotatebox{270}{$\Rightarrow$}};
\node at (-0.7,-1.3) {$f$};
\node at (-1,-2.7) {\rotatebox{270}{$\Rightarrow$}};
\node at (-0.7,-2.7) {$g$};
\path[->,font=\scriptsize,>=angle 90]
(A) edge node[above right] {$N^{\hat{\otimes} n}$} (B)
edge node[auto] {$\chi$} (C)
(C) edge node[auto] {$N^{\hat{\otimes} n}$} (B)
edge node[auto] {$\chi'$} (D)
(D) edge node[below right] {$N^{\hat{\otimes}n}$} (B);
\end{tikzpicture}
\end{center}
This gives the graded algebra structure. As for the contraction, the components $\tilde{N}^{\Sigma}_\chi((n),(n))$ are isomorphic to $\tilde{N}((n),(n))$ by pre-composition by $\chi$ and $\chi^{-1}$. As $\tilde{N}((n),(n))$ contracts onto $\id_{N^{\hat{\otimes} n}}$, $\tilde{N}^{\Sigma}_\chi((n),(n))$ contracts similarly to $\chi$.
\end{proof}

\begin{defn}
\textbullet \ For $A_1,...,A_k$ simplicial $k$-modules, we introduce the shorthand
$$N^{(k_1,...,k_n)}(A_1,...,A_k)=N(A_1\hat{\otimes}...\hat{\otimes}A_{k_1})\otimes ... \otimes N(A_{k_1+...+k_{n-1}+1}\hat{\otimes}...\hat{\otimes}A_k)$$
where $k=k_1+...+k_n$. Let $m_1+...+m_l=k$. Writing $\vec{k}=(k_1,...,k_n)$ and similarly for $\vec{m}$, define the complex
$$\tilde{N}(\vec{k},\vec{m}):=\Nat_{\sMod_k^{\times k}}(N^{\vec{k}},N^{\vec{m}})$$
Its symmetrized version $\tilde{N}^{\Sigma}(\vec{k},\vec{m})$ is defined as before by
$$\tilde{N}^{\Sigma}(\vec{k},\vec{m})=\bigoplus_{\chi\in \Sigma_k} \Nat_{\sMod_k^{\times k}}(N^{\vec{k}},N^{\vec{m}}\circ \chi) =: \bigoplus_{\chi\in \Sigma_k} \tilde{N}^{\Sigma}_{\chi}(\vec{k},\vec{m})$$

\noindent \textbullet \ We will refer to a finite sequence of integers $\vec{k}=(k_1,...,k_n)$ as a \emph{vector}. The sum of the entries of a vector is called its \emph{length} and denoted by $|\vec{k}|:=k_1+...+k_n$.

\noindent \textbullet \ We write $\tilde{N}^\Sigma$ for the dg-category whose objects are vectors $\vec{k}$, and whose morphism complexes are given by the $\tilde{N}^\Sigma(\vec{k},\vec{m})$ defined above.

\end{defn}

\begin{notn} \label{notn-sh-aw}
For any $\vec{k}=(k_1,...,k_n)$ with $|\vec{k}|=k$, by Lemma \ref{lem-shuffle-combinatorics} composing shuffle maps gives rise to a well-defined shuffle map which we denote by $\sh_{\vec{k}}\colon N^{\vec{k}}\rightarrow N^{(k)}=N^{\hat{\otimes}k}$. Similarly, the Alexander-Whitney map is associative by Lemma \ref{lem-AW-associative}, so composing AW-maps gives rise to a well-defined map $AW_{\vec{k}}\colon N^{(k)}\rightarrow N^{\vec{k}}$. Note that $AW_{\vec{k}}\circ \sh_{\vec{k}}\simeq\id_{N^{\vec{k}}}$ and $\sh_{\vec{k}}\circ AW_{\vec{k}}\simeq \id_{N^{(k)}}$.
\end{notn}

\begin{lem} \label{lem-doldkan-contractibility-1}
For every pair $\vec{k}=(k_1,...,k_n),\vec{m}=(m_1,...,m_l)$, the assignment
$$\phi\colon  f\mapsto AW_{\vec{m}}\circ f \circ \sh_{\vec{k}}$$
defines a homotopy equivalence $\tilde{N}^{\Sigma}((n),(n))\rightarrow \tilde{N}^{\Sigma}(\vec{k},\vec{m})$ with homotopy inverse
$$\psi\colon  g\mapsto \sh_{\vec{m}}\circ g \circ AW_{\vec{k}}$$
In particular, $\tilde{N}^{\Sigma}(\vec{k},\vec{m})$ contracts onto the degree zero subcomplex of elements of the form $AW_{\vec{m}}\circ \chi_*\circ \sh_{\vec{k}}$ for some $\chi\in \Sigma_k$.
\end{lem}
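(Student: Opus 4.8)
The plan is to show directly that $\phi$ and $\psi$ are mutually inverse up to chain homotopy, and then to feed the contraction of the source complex from Lemma~\ref{lem-tildeN-contract} through $\phi$. Write $k=|\vec{k}|=|\vec{m}|$, so that $\sh_{\vec{k}}\colon N^{\vec{k}}\to N^{\hat{\otimes} k}$ and $AW_{\vec{m}}\colon N^{\hat{\otimes} k}\to N^{\vec{m}}$ share the common middle term $N^{\hat{\otimes} k}$ and the source complex is $\tilde{N}^{\Sigma}((k),(k))=\bigoplus_{\chi\in\Sigma_k}\Nat(N^{\hat{\otimes} k},N^{\hat{\otimes} k}\circ\chi)$. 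First I would record that $\phi$ and $\psi$ are well defined and respect the $\Sigma_k$-grading: for $f$ in the $\chi$-summand, $\phi(f)=(AW_{\vec{m}}*\id_\chi)\circ f\circ\sh_{\vec{k}}$ again lies in the $\chi$-summand $\tilde{N}^{\Sigma}_{\chi}(\vec{k},\vec{m})$, and symmetrically for $\psi$ with the whiskered shuffle map $(\sh_{\vec{m}}*\id_\chi)$. Since pre- and post-composition with the fixed chain maps $\sh_{\vec{k}},AW_{\vec{k}},\sh_{\vec{m}},AW_{\vec{m}}$ commute with the Hom-complex differential, both $\phi$ and $\psi$ are chain maps, so it suffices to analyse the two composites summand by summand.

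Second, I would compute the composites using the Eilenberg--Zilber relations of Notation~\ref{notn-sh-aw}. On the $\chi$-summand,
\[
\phi\psi(g)=\bigl((AW_{\vec{m}}\circ\sh_{\vec{m}})*\id_\chi\bigr)\circ g\circ(AW_{\vec{k}}\circ\sh_{\vec{k}}),
\qquad
\psi\phi(f)=\bigl((\sh_{\vec{m}}\circ AW_{\vec{m}})*\id_\chi\bigr)\circ f\circ(\sh_{\vec{k}}\circ AW_{\vec{k}}).
\]
The relation $AW\circ\sh\simeq\id$ collapses the first line to $\phi\psi\simeq\id$ (indeed to $\phi\psi=\id$ on the nose if one uses the strict identity $AW\circ\sh=\id$), while $\sh\circ AW\simeq\id$ shows the pre- and post-factors in the second line are each homotopic to the identity, hence $\psi\phi\simeq\id$.

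Third --- and this is where the real work sits --- I would promote these pointwise homotopies to honest chain homotopies on the Nat-complexes. Choosing Eilenberg--Zilber homotopies $H_{\vec{k}}\colon\id\simeq\sh_{\vec{k}}\circ AW_{\vec{k}}$ and $H_{\vec{m}}\colon\id\simeq\sh_{\vec{m}}\circ AW_{\vec{m}}$, the standard fact that pre- and post-composition by homotopic maps induce homotopic maps on a Hom-complex yields the homotopy $\id\simeq\psi\phi$ explicitly in terms of $H_{\vec{k}}$ and $H_{\vec{m}}$; the whiskering by $\chi$ is harmless, since $(\sh_{\vec{m}}\circ AW_{\vec{m}})*\id_\chi\simeq\id*\id_\chi=\id$ via $H_{\vec{m}}*\id_\chi$. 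The main obstacle I expect is the Koszul sign bookkeeping in this induced homotopy, together with checking that whiskering by $\chi$ (a horizontal composition of $2$-morphisms, see Notation~\ref{rmk-N-2cat}) interacts correctly with the Hom-complex differential; once the signs are pinned down the argument is formal.

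Finally, for the ``in particular'' clause I would transport the contraction of Lemma~\ref{lem-tildeN-contract} along the homotopy equivalence $\phi$. That lemma gives a deformation retraction of $\tilde{N}^{\Sigma}((k),(k))$ onto the degree-zero subcomplex $k\Sigma_k=\bigoplus_\chi k\cdot\chi_*$. Applying the chain map $\phi$ carries each degree-zero generator $\chi_*$ to the degree-zero cycle $AW_{\vec{m}}\circ\chi_*\circ\sh_{\vec{k}}$ (a cycle, being a composite of chain maps), and since $\phi$ is a homotopy equivalence it identifies homotopy types and sends $k\Sigma_k$ isomorphically onto the span of the elements $AW_{\vec{m}}\circ\chi_*\circ\sh_{\vec{k}}$, which lie in distinct $\chi$-summands and are therefore linearly independent. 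Transporting the retraction then exhibits $\tilde{N}^{\Sigma}(\vec{k},\vec{m})$ as contracting onto exactly this degree-zero subcomplex, as claimed.
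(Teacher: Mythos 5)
Your proposal is correct and takes essentially the same route as the paper: conjugation by the iterated shuffle and Alexander--Whitney maps, with fixed Eilenberg--Zilber homotopies whiskered on both sides to produce the homotopies $\psi\phi\simeq\id$ and $\phi\psi\simeq\id$, followed by transporting the contraction of Lemma \ref{lem-tildeN-contract} along $\phi$ to get the degree-zero subcomplex spanned by the $AW_{\vec{m}}\circ\chi_*\circ\sh_{\vec{k}}$. Your additional observations (that the $\Sigma_k$-grading is respected, that the source complex is really $\tilde{N}^{\Sigma}((k),(k))$ with $k=|\vec{k}|$, and that $\phi\circ\psi$ is in fact the identity on the nose since $AW\circ\sh=\id$) only sharpen the argument the paper gives.
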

\begin{proof}
Fix homotopies $\alpha_{\vec{k}}\colon AW_{\vec{k}}\sh_{\vec{k}}\rightarrow \id$ and $\beta_{\vec{k}}\colon \sh_{\vec{k}}AW_{\vec{k}}\rightarrow \id$. Then we get homotopies
$$\beta_{\vec{m}}*\id*\beta_{\vec{k}}\colon  \psi\circ\phi \rightarrow \id$$
$$\alpha_{\vec{m}}*\id*\alpha_{\vec{k}}\colon  \phi\circ\psi \rightarrow \id$$
so that $\phi$ and $\psi$ are mutually inverse homotopy equivalences. Now the composition
$$k\Sigma_n \hookrightarrow \tilde{N}^{\Sigma}((n),(n))\rightarrow \tilde{N}^{\Sigma}(\vec{k},\vec{m})$$
takes $\chi$ to $AW_{\vec{m}}\circ \chi_*\circ \sh_{\vec{k}}$ and is a homotopy equivalence since $k\Sigma_n \hookrightarrow \tilde{N}^{\Sigma}((n),(n))$ is by Lemma \ref{lem-tildeN-contract}. The inverse
$$\tilde{N}^{\Sigma}(\vec{k},\vec{m})\rightarrow \tilde{N}^{\Sigma}((n),(n))\rightarrow k\Sigma_n$$
sends $AW_{\vec{m}}\circ \chi_*\circ \sh_{\vec{k}}$ to $\chi$, so $\tilde{N}^{\Sigma}(\vec{k},\vec{m})$ contracts as claimed.
\end{proof}

We now turn to establishing the action of $\tilde{N}^{\Sigma}$ on Hochschild complexes of dg-algebras. We begin by constructing a way of differentially extending functors between additive categories.

\begin{construction} \label{const-epsilon}
Let $\A$ and $\B$ be additive categories and let $\Fun^{pt}(\A,\B)$ be the category of pointed functors between them, that is, functors $F\colon \A\rightarrow \B$ such that there is an isomorphism $F(0)\simeq 0$. Note that $\Fun^{pt}(\A,\B)$ is itself an additive category. Denote by $m\text{-}\Ch(B)$ the additive category of $m$-fold chain complexes in $\B$. We will produce an additive functor
$$ (-)_\epsilon \colon  \Fun^{pt}(\A^{\times n},m\text{-}\Ch(\B))\rightarrow \Fun^{pt}(\Ch(\A)^{\times n},(n+m)\text{-}\Ch(\B)).$$
Let $F\colon \A^{\times n}\rightarrow m\text{-}\Ch(\B)$ be a ponited functor. Then $F_\epsilon$ sends an $n$-tuple of chain complexes $(A^1,...,A^n)$ in $\A$ to the $(n+m)$-fold chain complex in $\B$ given in multidegree $(p_1,...,p_n,q_1,...,q_m)$ by $F(A^1_{p_1},...,A^n_{p_n})_{q_1,...,q_m}$. In the same multidegree, the differentials are given by
$$d_i=\left\{ \begin{array}{rl} (-1)^{p_1+...+p_{i-1}} d^{A^i} & \text{, } 1\leq i\leq n \\ (-1)^{p_1+...+p_n + q_1+...+q_{i-n-1}} d^{F(A^1,...,A^n)}_{i-n} & \text{, } n+1\leq i \leq n+m \end{array} \right. $$
Since $F$ is a pointed functor, this does indeed define an $(n+m)$-fold chain complex in $\B$. Similarly, $F_\epsilon$ sends an $n$-tuple of morphisms $(f^i\colon A^i\rightarrow B^i)_{1\leq i \leq n}$ to the morphism given on the first $n$ multidegrees $(p_1,...,p_n)$ by the morphism
$$F(f^1_{p_1},...,f^n_{p_n})\colon  F(A^1_{p_1},...,A^n_{p_n})\rightarrow F(B^1_{p_1},...,B^n_{p_n})$$
Hence $F_\epsilon$ is indeed a functor.

Now let $F,G\colon \A^{\times n}\rightarrow m\text{-}\Ch(\B)$ be pointed functors and let $\alpha\colon F\rightarrow G$ be a natural transformation. Then $\alpha_\epsilon$ is the natural transformation given by applying $\alpha$ levelwise, i.e. for an $n$-tuple of chain complexes $(A^1,...,A^n)$ in $\A$, the morphism
$$(\alpha_\epsilon)_{(A^1,...,A^n)}\colon F_\epsilon(A^1,...,A^n)\rightarrow G_\epsilon(A^1,...,A^n)$$
is given in the first $n$ multidegrees $(p_1,...,p_n)$ by the morphism
$$\alpha_{(A^1_{p_1},...,A^n_{p_n})}\colon F(A^1_{p_1},...,A^n_{p_n})\rightarrow G(A^1_{p_1},...,A^n_{p_n}).$$
With this definition it is clear that $(-)_\epsilon$ is an additive functor.
\end{construction}

\begin{defn}
Building on Construction \ref{const-epsilon} we define the functor
$$ (-)_\delta \colon  \Fun^{pt}(\A^{\times n},m\text{-}\Ch(\B))\rightarrow \Fun^{pt}(\Ch(\A)^{\times n},\Ch(\B))$$
as the composition of $(-)_\epsilon$ with the totalization functor $(n+m)\text{-}\Ch(\B)\rightarrow \Ch(\B)$.
\end{defn}

Before considering the monoidality properties of the functor $(-)_\delta$, we need an observation about totalizations of $n$-fold chain complexes.

\begin{obs} \label{obs-reorder}
Let $\A$ be an additive category. The symmetric group on $n$ letters acts on the category of $n$-fold chain complexes in $\A$ by reordering the differentials. Specifically, if $A$ is an $n$-fold chain complex in $\A$ and $\chi\in\Sigma_n$, we have
$$(\chi\cdot A)_{p_1,...,p_n} = A_{p_{\chi^{-1}(1)},...,p_{\chi^{-1}(n)}}$$
and the differentials are similarly reordered. Let $\chi_{(p_1,...,p_n)}$ be the image of $\chi$ under the blow-up homomorphism $\Sigma_n\rightarrow \Sigma_{p_1+...+p_n}$. There is a natural transformation $g_\chi\colon \Tot\rightarrow \Tot\circ\chi$ given in multidegree $(p_1,...,p_n)$ by the sign of $\chi_{(p_1,...,p_n)}$. Composition of these transformations has the same effect as applying the sign associated to the composite permutation, such that $(g_{\chi'}*\id_{\chi})\circ g_{\chi} = g_{\chi'\chi}$ for any pair $\chi,\chi'\in \Sigma_n$.
\end{obs}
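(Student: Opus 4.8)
The plan is to write down $g_\chi$ completely explicitly and then check two things: that it is a chain map (this is the only real content), and that the $g_\chi$ compose as stated; naturality and degree-preservation will be formal. For an $n$-fold complex $A$, the graded objects underlying $\Tot(A)$ and $\Tot(\chi\cdot A)$ agree after reindexing, since the summand $A_{p_1,\dots,p_n}$ of $\Tot(A)$ is literally the summand $(\chi\cdot A)_{p_{\chi(1)},\dots,p_{\chi(n)}}$ of $\Tot(\chi\cdot A)$. I would define $g_\chi$ at $A$ to be this canonical reindexing isomorphism of underlying graded objects, twisted in each multidegree $(p_1,\dots,p_n)$ by the scalar $\sgn(\chi_{(p_1,\dots,p_n)})$. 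Naturality in $A$ is then immediate, because the reindexing is natural and the twisting scalars do not depend on $A$, and $g_\chi$ visibly preserves total degree.

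The heart of the matter is that $g_\chi$ intertwines the total differentials $D^A=\sum_i d_i^A$ and $D^{\chi\cdot A}=\sum_i d_i^{\chi\cdot A}$. Because the Koszul convention attaches the position-dependent sign $(-1)^{p_1+\dots+p_{i-1}}$ to the differential in direction $i$ (as in the form $d_i=(-1)^{p_1+\dots+p_{i-1}}\partial_i$ of Construction \ref{const-epsilon}), reordering the directions by $\chi$ carries the differential in direction $\chi(i)$ into position $i$ but equips it with the sign belonging to the new position rather than the old one. Hence, on the summand indexed by $(p_1,\dots,p_n)$, the coefficient of a fixed direction's differential in $D^{\chi\cdot A}$ differs from its coefficient in $D^A$ by a sign, and the identity $D^{\chi\cdot A}\circ g_\chi=g_\chi\circ D^A$ reduces, in each direction separately, to the statement that this sign discrepancy equals the ratio of the twisting scalars $\sgn(\chi_{(p)})$ at the source and target multidegrees of that differential. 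I would verify this sign identity by reducing to generators: $\Sigma_n$ is generated by the adjacent transpositions $s_i=(i,i+1)$, for which the reordering swaps only two neighbouring directions and the Koszul sign produced is $(-1)^{p_ip_{i+1}}$, which is exactly the sign of the blow-up $(s_i)_{(p_1,\dots,p_n)}$ of a transposition of adjacent blocks of sizes $p_i,p_{i+1}$. Equivalently one can compute the ``discrete gradient'' of $(p_1,\dots,p_n)\mapsto\sgn(\chi_{(p)})$ in each direction directly and match it against the reordering sign, a short bilinear calculation.

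For the composition law I would unwind the horizontal composite $g_{\chi'}*\id_\chi$: evaluated at $A$ it is $g_{\chi'}$ evaluated at $\chi\cdot A$, so in a given multidegree it contributes the scalar $\sgn(\chi'_{(\dots)})$ read off at the $\chi$-permuted degrees, while $g_\chi$ contributes $\sgn(\chi_{(p)})$. The claim $(g_{\chi'}*\id_\chi)\circ g_\chi=g_{\chi'\chi}$ then follows from the multiplicativity (cocycle identity) of blow-up signs, namely that permuting blocks first by $\chi$ and then by $\chi'$ realizes the blow-up of $\chi'\chi$, so that the two block-permutation signs multiply to $\sgn((\chi'\chi)_{(p)})$, precisely the sign defining $g_{\chi'\chi}$. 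Combined with the evident fact that the underlying reindexing isomorphisms compose to the one for $\chi'\chi$ (functoriality of the $\Sigma_n$-action on $n$-fold complexes), this yields the stated identity.

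I expect the main obstacle to be exactly the sign bookkeeping in the chain-map step: one must track the Koszul signs produced by reordering the differentials and confirm that they are compensated by the block-permutation sign in every direction, and in the composition step one must be careful that the second block sign is evaluated at the correctly permuted multidegree so that the product telescopes. Reducing to adjacent transpositions isolates the single genuinely nontrivial sign $(-1)^{p_ip_{i+1}}$ and keeps the computation manageable.
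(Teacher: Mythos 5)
Your proposal is correct, and in fact it supplies something the paper does not: Observation \ref{obs-reorder} is stated without any proof, so there is no argument of the paper's to compare against. Your write-up resolves the one genuinely delicate point correctly, namely the interpretation of ``the differentials are similarly reordered.'' If one reorders the anticommuting structure differentials naively, then the summand-wise identity is already a chain isomorphism $\Tot(A)\rightarrow\Tot(\chi\cdot A)$ and the sign-twisted map of the statement is \emph{not} a chain map; the observation is only true under your reading, in which the $n$-fold complex carries raw commuting differentials $\partial_i$ with position-dependent Koszul signs $d_i=(-1)^{p_1+\cdots+p_{i-1}}\partial_i$ as in Construction \ref{const-epsilon}, and reordering re-attaches the signs according to the new positions. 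This is also the reading forced by the only use of the observation, in Lemma \ref{lem-delta-tensor}. Your logical structure is sound as well: you establish the composition law $(g_{\chi'}*\id_{\chi})\circ g_{\chi}=g_{\chi'\chi}$ purely combinatorially (block permutations compose to the blow-up of the composite, evaluated at the permuted multidegree, and signs multiply), independently of the chain-map property, so the reduction of the chain-map check to adjacent transpositions $s_i$ is legitimate; and the single remaining sign identity, that the Koszul discrepancy in directions $i,i+1$ is exactly $(-1)^{p_ip_{i+1}}=\sgn\bigl((s_i)_{(p_1,\ldots,p_n)}\bigr)$, is the correct computation. The alternative you mention (directly matching the ``discrete gradient'' of $p\mapsto\sgn(\chi_{(p)})$ against the reordering sign in each direction) is presumably the verification the author regarded as evident; either route completes the argument.
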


\begin{lem} \label{lem-delta-tensor}
Let the pointed functor $F\colon \A^{\times n}\rightarrow \Ch(\B)$ be given by $F(A^1,...,A^n)=F'(A^1,...,A^i)\otimes F''(A^{i+1},...,A^n)$ for a pair of pointed functors $F'\colon \A^{\times i}\rightarrow \Ch(\B)$ and $F''\colon \A^{\times n-i}\rightarrow \Ch(\B)$. Then there is a natural isomorphism
$$F_\delta (A^1,...,A^n)\simeq F'_\delta (A^1,...,A^i) \otimes F''_\delta(A^{i+1},...,A^n).$$
Furthermore, this isomorphism is associative.
\end{lem}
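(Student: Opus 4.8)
The plan is to exhibit both sides as totalizations of one and the same multicomplex, taken in two different orderings of the differential degrees, and then to invoke the reordering isomorphism of Observation \ref{obs-reorder}. Recall from Construction \ref{const-epsilon} and the definition of $(-)_\delta$ that, since $F$ lands in $\Ch(\B)=1\text{-}\Ch(\B)$, the complex $F_\epsilon(A^1,\ldots,A^n)$ is the $(n+1)$-fold complex whose value in multidegree $(p_1,\ldots,p_n,q)$ is $F(A^1_{p_1},\ldots,A^n_{p_n})_q$, and $F_\delta=\Tot\circ F_\epsilon$. Using the hypothesis $F=F'\otimes F''$, I would first split the last differential degree: the tensor product of chain complexes is itself the totalization of the bicomplex $\bigl(F'(A^1_{p_1},\ldots,A^i_{p_i})_r\otimes F''(A^{i+1}_{p_{i+1}},\ldots,A^n_{p_n})_s\bigr)_{r,s}$ with Koszul sign $(-1)^r$ on the $F''$-differential. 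Refining $F_\epsilon$ accordingly produces an $(n+2)$-fold complex $\hat F$ in the degrees $(p_1,\ldots,p_n,r,s)$, and a short check that the sign convention of Construction \ref{const-epsilon} is compatible with partial totalization shows $F_\delta=\Tot(\hat F)$.

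Next I would treat the right-hand side in the same way. By definition $F'_\delta$ and $F''_\delta$ are the totalizations of complexes in the degrees $(p_1,\ldots,p_i,r)$ and $(p_{i+1},\ldots,p_n,s)$ respectively, and $F'_\delta\otimes F''_\delta$ is in turn the totalization of the evident bicomplex in the two resulting total degrees. Applying compatibility with partial totalization once more gives $F'_\delta\otimes F''_\delta=\Tot(\hat G)$, where $\hat G$ is the $(n+2)$-fold complex whose degrees are listed in the order $(p_1,\ldots,p_i,r,p_{i+1},\ldots,p_n,s)$. The key observation is that $\hat F$ and $\hat G$ are literally the same multicomplex: they agree summand-by-summand, the summand indexed by $(p_1,\ldots,p_n,r,s)$ being $F'(A^1_{p_1},\ldots,A^i_{p_i})_r\otimes F''(A^{i+1}_{p_{i+1}},\ldots,A^n_{p_n})_s$ in both cases, and they differ only in the order in which the differential degrees are listed, so that $\hat G=\chi\cdot\hat F$ for the permutation $\chi$ moving the degree $r$ from after $p_n$ to just before $p_{i+1}$.

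The desired isomorphism is then $g_\chi\colon\Tot(\hat F)\to\Tot(\chi\cdot\hat F)$ from Observation \ref{obs-reorder}, which on the summand indexed by $(p_1,\ldots,p_n,r,s)$ is multiplication by the sign $(-1)^{r(p_{i+1}+\ldots+p_n)}$. Naturality in $(A^1,\ldots,A^n)$ is immediate, since the structure maps on both sides act within a fixed multidegree while the sign depends only on that multidegree. Associativity of the isomorphism, for a threefold factorization $F=F'\otimes F''\otimes F'''$, reduces to the cocycle identity $(g_{\chi'}*\id_\chi)\circ g_\chi=g_{\chi'\chi}$ recorded in Observation \ref{obs-reorder}, since the two bracketings correspond to two factorizations of the same net reordering permutation.

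I expect the main obstacle to be the sign bookkeeping at the one place where the two descriptions genuinely disagree: one must confirm that the single Koszul sign $(-1)^{r(p_{i+1}+\ldots+p_n)}$ simultaneously reconciles \emph{all} the atomic differentials — the differentials $d^{A^j}$ for $j>i$ (which acquire an extra $(-1)^r$ on the right), the differential $d^{F'}$ (which changes by $(-1)^{p_{i+1}+\ldots+p_n}$), and $d^{F''}$ (which is unchanged). This is precisely the content packaged by Observation \ref{obs-reorder}, so once the reduction to a pure reordering of the multicomplex is made, the verification becomes automatic; the genuine work is therefore in setting up $\hat F$ and $\hat G$ carefully enough that the identification $\hat G=\chi\cdot\hat F$ is manifest.
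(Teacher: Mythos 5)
Your proposal is correct and follows essentially the same route as the paper: both lift $F_\epsilon$ to an $(n+2)$-fold complex, identify $F'_\delta\otimes F''_\delta$ as the totalization of the same multicomplex with the differential degrees reordered by the cycle moving the $F'$-degree to just after $p_i$, and take the isomorphism to be $g_\chi$ from Observation \ref{obs-reorder} with sign $(-1)^{q_1(p_{i+1}+\ldots+p_n)}$. Your associativity argument via the cocycle identity is also the paper's, which simply makes the underlying permutation identity in $\Sigma_{n+2}$ explicit.
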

\begin{proof}
The tensor product $F'(A^1,...,A^i)\otimes F''(A^{i+1},...,A^n)$ is the totalization of a bicomplex, so we can lift $F_\delta$ to a functor $F_\epsilon\colon  \sCh_k^{\times n}\rightarrow (n+2)\text{-}\Ch_k$ such that $\Tot\circ F_\epsilon = F_\delta$. As before, the differentials in multidegree $(p_1,...,p_n,q_1,q_2)$ are given by
$$d_i=\left\{ \begin{array}{ll} (-1)^{p_1+...+p_{i-1}} d^{A^i} & \text{, } 1\leq i\leq n \\ (-1)^{p_1+...+p_n} d^{F'(A^1,...,A^i)} & \text{, } i=n+1 \\ (-1)^{p_1+...+p_n+q_1} d^{F''(A^{i+1},...,A^n)} & \text{, } i=n+2 \end{array} \right. $$
Now the tensor product $F'_\delta (A^1,...,A^i) \otimes F''_\delta(A^{i+1},...,A^n)$ is obtained by reordering the differentials and totalizing. Specifically, we must pass $d_{n+1}$ past $d_{i+1},...,d_{n}$, which in multidegree $(p_1,...,p_n,q_1,q_2)$ incurs a sign $(-1)^{q_1(p_{i+1}+...+p_n)}$. The natural isomorphism in the statement of the lemma is thus obtained as the transformation $g_{\chi_i}$ of Observation \ref{obs-reorder} where $\chi_i$ is the cycle $(i+1,...,n,n+1)\in \Sigma_{n+2}$.

The above recipe generalizes readily to a version with more than two tensor factors by replacing $\chi_i$ with the permutation $\chi_{i_1,...,i_m}\in \Sigma_{n+m}$ given by the composition of cycles
$$\chi_{i_1,...,i_m}=(i_1+...+i_m+1,...,n+m)\circ...\circ(i_1+1,...,n+1)$$
To see that this is associative, it is sufficient to look at the case of three factors:
$$F(A_1,...,A_{n})=F^1(A_1,...,A_i)\otimes F^{2}(A_{i+1},...,A_{i+j})\otimes F^3(A_{i+j+1},...,A_{n})$$
Associativity of the natural isomorphism above now follows from the identity
$$(i+j+2,...,n+1,n+2) \circ (i+1,...,n,n+1)$$
$$ = (i+1,...,n+1,n+2) \circ (i+j+1,...,n+1,n+2)$$
in $\Sigma_{n+2}$.
\end{proof}

\begin{defn}
Write $\zeta_n$ for the natural isomorphism
$$\zeta_n\colon (F^1\otimes ... \otimes F^n)_\delta \xrightarrow{\sim} F^1_\delta \otimes ...\otimes F^n_\delta$$
of functors $\A^{\times i}\rightarrow \Ch(\B)$ given by Lemma \ref{lem-delta-tensor}. 
\end{defn}

\begin{notn} \label{notn-C-vec}
Recall the cyclic bar construction from Definition \ref{defn-cyclic-bar-construction} (2). We extend this definiton to the functor $C^{\vec{k}}\colon \dgAlg_k\rightarrow \Ch_k$ by
$$A\mapsto C^{\vec{k}}(A)=C(A^{\otimes k_1})\otimes ... \otimes C(A^{\otimes k_n})$$
i.e. $C^{\vec{k}}(A)=N_\delta B^{cy}(A^{\otimes k_1})\otimes ... \otimes N_\delta B^{cy}(A^{\otimes k_n})$. 
\end{notn}

\begin{defn} \label{defn-theta-iso} Recall that the cyclic bar construction $B^{cy}\colon \dgAlg_k\rightarrow \sCh_k$ is a symmetric monoidal functor. We denote the natural structure isomorphism by
$$\theta_n\colon B^{cy}(A_1)\hat{\otimes} ... \hat{\otimes} B^{cy}(A_n)\rightarrow B^{cy}(A_1\otimes ... \otimes A_n)$$
The isomorphism is given in simplicial degree $k-1$ (in which we have $nk$ tensor factors) by the permutation $\chi_{n,k}\in \Sigma_{nk}$ sending $i+dk$ to $d+1+(i-1)n$ for $0< i \leq k$ and $0\leq d <n$, with a sign like that in the proof of Lemma \ref{lem-bcyc-sym-mon}.
\end{defn}

\begin{prop} \label{defn-action-hochschild}
The dg-category $\tilde{N}^{\Sigma}$ acts on Hochschild complexes of dg-algebras. That is, we have natural transformations
$$\tilde{N}^{\Sigma}(\vec{k},\vec{m})\otimes C^{\vec{k}}\rightarrow C^{\vec{m}}$$
of functors $\dgAlg_k\rightarrow \Ch_k$ compatible with composition.
This action exhibits $C\colon \dgAlg_k\rightarrow \Ch_k$ as a symmetric monoidal, $\mathbb{E}_{\infty}$-comonoidal functor.
\end{prop}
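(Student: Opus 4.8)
The plan is to build the action by differentially extending the operations of $\tilde{N}^{\Sigma}$ with the functor $(-)_\delta$ and evaluating them on cyclic bar constructions, and then to check that all the resulting signs are consistent. Throughout, the functors $N^{\vec{k}}$ are pointed (they vanish on the zero module), so Construction \ref{const-epsilon} and the functor $(-)_\delta$ apply to them.

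First I would record a natural identification
$$\Theta_{\vec{k}}\colon (N^{\vec{k}})_\delta\bigl(B^{cy}(A),\dots,B^{cy}(A)\bigr)\xrightarrow{\ \sim\ } C^{\vec{k}}(A),$$
where $|\vec{k}|=k$ copies of $B^{cy}(A)$ are inserted. To produce it, apply the isomorphism $\zeta$ of Lemma \ref{lem-delta-tensor} to strip off the outer tensor product $N^{\vec{k}}=\bigotimes_j N^{\hat{\otimes} k_j}$, reducing to identifying each factor $(N^{\hat{\otimes} k_j})_\delta(B^{cy}(A),\dots,B^{cy}(A))$ with $C(A^{\otimes k_j})=N_\delta B^{cy}(A^{\otimes k_j})$. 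For this, use the symmetric monoidal structure isomorphism $\theta_{k_j}$ of $B^{cy}$ (Lemma \ref{lem-bcyc-sym-mon}, Definition \ref{defn-theta-iso}) to rewrite $B^{cy}(A)^{\hat{\otimes} k_j}\cong B^{cy}(A^{\otimes k_j})$, and check that the $\delta$-extension of $N^{\hat{\otimes} k_j}$ agrees with $N_\delta$ applied to the levelwise tensor product, the residual reordering of differential directions being accounted for by the sign transformations $g_\chi$ of Observation \ref{obs-reorder}.

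With $\Theta$ in hand, the action of an operation $f\in\tilde{N}^{\Sigma}_\chi(\vec{k},\vec{m})$, i.e.\ a natural transformation $N^{\vec{k}}\to N^{\vec{m}}\circ\chi$, is defined by conjugating its extension $f_\delta\colon (N^{\vec{k}})_\delta\to (N^{\vec{m}}\circ\chi)_\delta$, evaluated at $k$ copies of $B^{cy}(A)$, by the identifications $\Theta_{\vec{k}},\Theta_{\vec{m}}$ and the reordering sign $g_\chi$ (which is all the surviving effect of $\chi$, since the inputs coincide):
$$C^{\vec{k}}(A)\xrightarrow{\Theta_{\vec{k}}^{-1}}(N^{\vec{k}})_\delta\xrightarrow{f_\delta}(N^{\vec{m}}\circ\chi)_\delta\xrightarrow{g_\chi}(N^{\vec{m}})_\delta\xrightarrow{\Theta_{\vec{m}}}C^{\vec{m}}(A),$$
where the argument $(B^{cy}(A),\dots,B^{cy}(A))$ is suppressed. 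The central technical point is that $(-)_\delta$ is a dg-functor on Hom-complexes: a degree-$d$ transformation $f$ yields $f_\delta$ of degree $d$ with $D(f_\delta)=(Df)_\delta$. Verifying this amounts to a Koszul-sign computation comparing the graded commutator of $f_\delta$ with the input differentials $d_i$ for $1\le i\le k$ (which carry the signs $(-1)^{p_1+\dots+p_{i-1}}$ of Construction \ref{const-epsilon}) and with the output differential; this is what guarantees the action is a chain map.

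Naturality in $A$ is then inherited from the naturality of $f$, $\theta$, and $g_\chi$. For compatibility with composition I would take $f\in\tilde{N}^{\Sigma}_\chi(\vec{k},\vec{m})$ and $g\in\tilde{N}^{\Sigma}_{\chi'}(\vec{m},\vec{l})$, whose product in $\tilde{N}^{\Sigma}$ is $(g*\id_\chi)\circ f$ in the $\chi'\chi$-summand (Lemma \ref{lem-tildeN-contract}), and show the two actions agree using functoriality of $(-)_\delta$ together with its compatibility with the whiskering $*$, the associativity of $\zeta$ (Lemma \ref{lem-delta-tensor}), and the cocycle identity $(g_{\chi'}*\id_\chi)\circ g_\chi=g_{\chi'\chi}$ of Observation \ref{obs-reorder}, which matches exactly the $\Sigma$-graded multiplication. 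Finally, writing $C=N_\delta\circ B^{cy}$, the symmetric monoidal structure of $B^{cy}$ composed with the lax symmetric monoidal structure of $N_\delta$ carried by the $\delta$-extended shuffle map --- symmetric and associative by Lemma \ref{lem-shuffle-combinatorics} --- exhibits $C$ as symmetric monoidal (the map $C(A)\otimes C(B)\to C(A\otimes B)$ being $N_\delta(\theta)\circ\sh_\delta$, the $\delta$-extension of $\sh\in\tilde{N}^{\Sigma}((1,1),(2))$ evaluated on the pair $(B^{cy}A,B^{cy}B)$), while the operad $\mathcal{O}$ and the $\delta$-extension of the $\mathbb{E}_\infty$-comonoidal structure of $N$ from Lemma \ref{lem-AW-E-infty}, again composed with $\theta$, exhibit $C$ as $\mathbb{E}_\infty$-comonoidal with $AW$ underlying $\nu_2$. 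I expect the main obstacle to be precisely the sign bookkeeping of the second and third paragraphs: showing that the three independent sources of signs --- the Koszul signs of Construction \ref{const-epsilon}, the permutation signs in $\theta$ (Definition \ref{defn-theta-iso}), and the reordering signs $g_\chi$ (Observation \ref{obs-reorder}) --- assemble consistently, so that $\Theta$ is a genuine chain isomorphism, $(-)_\delta$ is a chain map on Hom-complexes, and the action respects composition.
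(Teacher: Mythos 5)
Your proposal is correct and follows essentially the same route as the paper: your identification $\Theta_{\vec{k}}$ is precisely the paper's $N_\delta\theta_{\vec{k}}=(N_\delta(\theta_{k_1})\otimes\cdots\otimes N_\delta(\theta_{k_n}))\circ\zeta_n$, the action is the same conjugation of $f_\delta$ evaluated at the diagonal $(B^{cy}(A),\dots,B^{cy}(A))$ by these isomorphisms, and the monoidality conclusions are drawn exactly as in the paper from the strict symmetry and associativity of the shuffle map together with the acyclicity of the transformation complexes. If anything, you are more explicit than the paper about the points it leaves implicit, namely the reordering sign $g_\chi$ identifying $(N^{\vec{m}}\circ\chi)_\delta$ with $(N^{\vec{m}})_\delta$ on the diagonal, the Koszul signs making $(-)_\delta$ a dg-functor on Hom-complexes, and the verification of compatibility with composition.
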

\begin{proof}
For $\vec{k}=(k_1,...,k_n)$ denoting
$$N_\delta\theta_{\vec{k}}=(N_\delta (\theta_{k_1})\otimes ... \otimes N_\delta (\theta_{k_n}))\circ \zeta_n\colon $$
$$N^{\vec{k}}_\delta(\Delta_k B^{cy}(A)) \rightarrow N_\delta B^{cy}(A^{\otimes k_1})\otimes ... \otimes N_\delta B^{cy}(A^{k_n})=C^{\vec{k}}(A)$$
where $\Delta_k\colon \sCh\rightarrow \sCh^{\times k}$ is the diagonal functor.
If $f\in \tilde{N}^{\Sigma}$, we write $(f_\delta)_{(A_1,...,A_n)}$ for the component of $f_\delta$ at the $n$-tuple $(A_1,...,A_n)$ of simplicial chain complexes over $k$. We now have a composite morphism
\begin{center}
\begin{tikzpicture}
\node (A) at (-6,0) {$C^{\vec{k}}(A)$};
\node (B) at (0,0) {$N^{\vec{k}}_\delta (B^{cy}(A),...,B^{cy}(A))$};
\node (C) at (0,-2) {$N_\delta^{\vec{m}}(B^{cy}(A),...,B^{cy}(A))$};
\node (D) at (-6,-2) {$C^{\vec{m}}(A)$};

\path[->,font=\scriptsize,>=angle 90]
(A) edge node[above] {$N_\delta (\theta_{\vec{k}})^{-1}$} (B)
(B) edge node[auto] {$(f_\delta)_{(B^{cy}(A),...,B^{cy}(A))}$} (C)
(C) edge node[below] {$N_\delta (\theta_{\vec{m}})$} (D);
\end{tikzpicture}
\end{center}
We therefore get a morphism
$$i_{\vec{k},\vec{m}}\colon  \tilde{N}^{\Sigma}(\vec{k},\vec{m}) \rightarrow \Nat(C^{\vec{k}}(A), C^{\vec{m}}(A))$$
$$f\mapsto N_\delta\theta_{\vec{m}}\circ (f_\delta*\id_{\Delta_k\circ B^{cy}})\circ (N_\delta\theta_{\vec{k}})^{-1}$$
whose adjoint is the morphism in the statement of the proposition. The contractibility of $\tilde{N}$ (see Lemma \ref{lem-doldkan-contractibility-1}) now implies that $C\colon \dgAlg_k\rightarrow \Ch_k$ is $\mathbb{E}_\infty$-monoidal and $\mathbb{E}_\infty$-comonoidal. However, since the shuffle maps are strictly symmetric, it is in fact symmetric monoidal as claimed.

\end{proof}

\begin{lem}
The images of $\tilde{N}^\Sigma_{\chi}((n),(n))$ and $\tilde{N}^\Sigma_{\chi'}((n),(n))$ in $\End_{\Ch_k}(C((-)^{\otimes n}))$ are disjoint for $\chi\neq\chi'$.
\end{lem}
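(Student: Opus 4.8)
The plan is to detect the permutation $\chi$ by equipping the test algebra with an auxiliary grading and reading off $\chi$ from the way the induced operation reindexes that grading. Since the structure maps $N_\delta\theta_{(n)}$ appearing in Proposition \ref{defn-action-hochschild} are natural isomorphisms obtained by permuting and totalizing tensor factors, it suffices to understand how the operations $i_{(n),(n)}(f)$ interact with such a grading; disjointness of the two images then follows because operations coming from $\tilde{N}^{\Sigma}_{\chi}((n),(n))$ and $\tilde{N}^{\Sigma}_{\chi'}((n),(n))$ reindex it by $\chi$ and $\chi'$ respectively.

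Concretely, I would fix an auxiliary $\Z_{\geq 0}$-grading on $A$ making it a weighted dg-algebra $A=\bigoplus_{w}A^{(w)}$. The $n$-fold tensor power then carries a $\Z^n$-grading, the \emph{copy-weight}, whose $i$-th coordinate records the weight sitting in the $i$-th tensor factor of $A^{\otimes n}$. I would check that this grading is preserved by the cyclic bar differential: the face maps multiply adjacent factors copywise, the cyclic face map is a copywise rotation, and the degeneracies insert the weight-zero unit, so $C(A^{\otimes n})=N_\delta B^{cy}(A^{\otimes n})$ inherits a natural $\Z^n$-grading by copy-weight. I would likewise check that the isomorphism $\theta_n$ of Definition \ref{defn-theta-iso}, being a permutation of tensor factors carrying the $i$-th factor $B^{cy}(A)$ to the $i$-th copy of $A$, identifies the copy-weight grading on $C(A^{\otimes n})$ with the grading on $N^{\hat{\otimes}n}_\delta(B^{cy}(A),\dots,B^{cy}(A))$ recording the weight in each of the $n$ arguments. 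Since $f\in\tilde{N}^{\Sigma}_{\chi}((n),(n))$ is a natural transformation $N^{\hat{\otimes}n}\to N^{\hat{\otimes}n}\circ\chi$, it feeds the $\chi^{-1}(j)$-th argument into the $j$-th output slot; tracing this through $\zeta_n$ and $\theta_n$ shows that $i_{(n),(n)}(f)$ sends the copy-weight-$w$ summand of $C(A^{\otimes n})$ into the copy-weight-$(\chi\cdot w)$ summand, where $(\chi\cdot w)_j=w_{\chi^{-1}(j)}$.

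With this in hand the disjointness is immediate on \emph{generic} weights: if $x$ lies in both images, say $x=i_{(n),(n)}(f)=i_{(n),(n)}(g)$ with $f\in\tilde{N}^{\Sigma}_{\chi}$ and $g\in\tilde{N}^{\Sigma}_{\chi'}$, then $x$ simultaneously shifts copy-weight by $\chi$ and by $\chi'$. Choosing a weight $w\in\Z^n$ with pairwise distinct coordinates (arising from generators placed in distinct weights) we have $\chi\cdot w\neq\chi'\cdot w$ whenever $\chi\neq\chi'$, so the copy-weight-$w$ component of $x$ lands in $C(A^{\otimes n})^{(\chi\cdot w)}\cap C(A^{\otimes n})^{(\chi'\cdot w)}=0$; hence $x$ annihilates all inputs of pairwise-distinct copy-weight.

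The remaining, and in my view principal, difficulty is to promote this to the vanishing of $x$ on all of $C(A^{\otimes n})$ and for all $A$, i.e.\ to show that a nonzero operation in the image of $\tilde{N}^{\Sigma}_{\chi}((n),(n))$ is already detected on inputs of pairwise-distinct copy-weight. I would reduce to free algebras $A=k\langle y_1,\dots,y_N\rangle$ — every dg-algebra is a quotient of such and $x$ is natural — and assign the generators $y_i$ generic ($\Z$-linearly independent) weights. The point to be argued is that the whiskered transformation $f_\delta * \id_{\Delta_n\circ B^{cy}}$ is nonzero only if it acts nontrivially on a multi-homogeneous input in which all $n$ arguments are nonzero and carry distinct weights; this rests on the multilinearity of $N^{\hat{\otimes}n}$ in its $n$ arguments, so that a nonzero natural transformation cannot be supported on inputs with a vanishing slot, together with the freedom to separate the weights of the chosen generators. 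Granting this detection statement, $x=0$, and the two images meet only in $0$, as claimed.
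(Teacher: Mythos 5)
Your Steps 1--3 are correct: for a weight-graded $A$ the complex $C(A^{\otimes n})$ splits by copy-weight, the naturality of $f\in\tilde{N}^{\Sigma}_{\chi}((n),(n))$ in its $n$ \emph{separate} arguments forces the induced operation to shift copy-weight by $\chi$, and hence an operation lying in both images annihilates every input of pairwise-distinct copy-weight. This is essentially the paper's detection device in chain-level form: the paper takes the commutative test algebra $k[x]$, passes to $H_0(C((-)^{\otimes n}))=HH_0((-)^{\otimes n})=k[x_1,\dots,x_n]$, and distinguishes $\chi$ from $\chi'$ by their actions on monomials with pairwise distinct exponents --- exactly your generic weights. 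The decisive difference is that the paper works on homology, where the contraction of $\tilde{N}^{\Sigma}_{\chi}((n),(n))$ onto $k\chi$ (Lemma \ref{lem-tildeN-contract}) converts every element of that component into a multiple of the honest permutation $\chi_*$, so comparing two permutation actions ends the proof with no further input. You insist on a chain-level conclusion, and that is exactly where your argument is incomplete.

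Your Step 4 is a genuine gap, and the sketch you offer for it would fail. The obstruction is not inputs with a vanishing slot but inputs with \emph{equal} copy-weights: even for a free algebra with generically weighted generators, $C(A^{\otimes n})$ contains elements such as the class of $y\otimes y\otimes\cdots\otimes y$, in which every copy carries the same weight, and your Step 3 says nothing about these; no choice of generator weights removes them. Naturality of $x$ in $A$ cannot move them to generic inputs either, because an algebra map $\phi$ acts through $\phi^{\otimes n}$, applying the \emph{same} map to every copy and hence preserving the equal-weight locus; so one-variable naturality can never separate the copies. What is needed is a polarization step that exploits the naturality of $f$ in its $n$ arguments separately, along morphisms of simplicial chain complexes that are not of the form $B^{cy}(\phi)$, while staying inside diagonal tuples $(B^{cy}(A'),\dots,B^{cy}(A'))$ where the hypothesis $i(f)=i(g)$ actually applies. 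For instance, take $A'=A\otimes k[t^{\pm 1}]$ with $t$ of weight one: multiplying the zeroth tensor factor by $t^{w}$ gives isomorphisms of simplicial chain complexes $B^{cy}(A')^{(0)}\cong B^{cy}(A')^{(w)}$, and $B^{cy}(A)$ is a retract of $B^{cy}(A')^{(0)}$; embedding the $j$-th argument of the tuple $(B^{cy}(A),\dots,B^{cy}(A))$ into the weight-$j$ summand of $B^{cy}(A')$ and invoking naturality of $f$ along these split injections identifies $x_A$ (up to a split injection) with a restriction of $x_{A'}$ to a generic copy-weight summand, which vanishes by your Step 3, whence $x_A=0$. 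Without a step of this kind, your argument proves only that a common element $x$ kills generic-copy-weight inputs, not that $x=0$.
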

\begin{proof}
Note that for $A$ a commutative algebra we have
$$H_0(C((-)^{\otimes n}))=HH_0((-)^{\otimes n})=(-)^{\otimes n}$$
so the induced action of $f\in \tilde{N}^\Sigma((n),(n))$ on $H_0(C((-)^{\otimes n}))$ is given by permuting tensor factors. Namely, $\tilde{N}^\Sigma_{e}((n),(n))=\tilde{N}((n),(n))$ acts as the identity since each $f\in \tilde{N}((n),(n))$ is homotopic to the identity map. Now each $\tilde{N}^\Sigma_{\chi}((n),(n))$ is isomorphic to $\tilde{N}((n),(n))$, the map given by postcomposition by $\chi_*$, and it follows that each $f\in \tilde{N}^\Sigma_{\chi}((n),(n))$ is homotopic to $\chi_*$. In particular, for $A=k[x]$ we see that $\chi$ and $\chi'$ act differently on $k[x]^{\otimes n}\simeq k[x_1,...,x_n]$. This proves the claim.
\end{proof}

\section{DG-fattening of props}

In this section we will build the fattening functor for dg-props and prove Theorem A. The fattening functor will associate to a dg-prop $P$ a certain full subcategory of the free symmetric monoidal dg-category on $P$ and $\tilde{N}^{\Sigma}$, modulo relations expressing that the Dold-Kan morphisms are natural with respect to the morphisms of $P$.

\begin{rmk}
To spell out what Theorem A means, to each dg-prop $P$, there is a natural homotopy-coherent $P$-action on the Hochschild complex of $\Ass\otimes P$-algebras. The homotopies that make up the coherencies are encoded in a replacement dg-prop $\tilde{P}$ which strictify the homotopy-coherent $P$-action. This strictification is moreover functorial in the prop.
\end{rmk}

In order to produce the functor $\tilde{(-)}$, we first construct an auxiliary functor $Q\colon \mathsf{dgprop}\rightarrow \dgCat^{\otimes}$ landing in symmetric monoidal dg-categories. $Q$ is constructed using a natural family of generators and relations and will contain $\tilde{(-)}$ as a full subfunctor, i.e. there will be a natural transformation $\tilde{(-)}\rightarrow Q$ whose components are inclusions of full subcategories.

\begin{construction} \label{const-Q}

$\bullet$ Let $C$ be a dg-category. We define a symmetric monoidal dg-category $C^\otimes$ given as follows. The objects of $C^\otimes$ is the free monoid on the objects of $C$. Given two words $a=a_1 \otimes ... \otimes a_n$ and $b=b_1 \otimes ... \otimes b_n$ in $\Ob C^\otimes$, the morphism complex $a\rightarrow b$ in $C^\otimes$ is given by
$$\Hom_{C^\otimes}(a,b) = \bigoplus_{\sigma\in \Sigma_n} \bigotimes_{i=1}^n \Hom_C (a_i,b_{\sigma(i)}).$$
We write $(f_1\otimes ... \otimes f_n)_\sigma$ for an elementary tensor in the summand of $\sigma\in\Sigma_n$. Note that for the empty product $\emptyset\in \Ob C^\otimes$, the tensor product is indexed over the empty set, which by convention means that $\Hom_{C^\otimes}(\emptyset,\emptyset)=k$ concentrated in degree 0.

If $a$ and $b$ are words of different lengths, i.e. $a=a_1\otimes ... \otimes a_n$ and $b=g_1\otimes ... \otimes b_m$ with $n\neq m$, then $\Hom_{C^\otimes}(a,b)=0$.

The composition
$$\Hom_{C^\otimes}(b,c)\otimes \Hom_{C^\otimes}(a,b) \rightarrow \Hom_{C^\otimes}(a,c)$$
is given on elementary tensors by sending $f\colon a\rightarrow b$ and $g\colon b\rightarrow c$, where $f=(f_1\otimes ... \otimes f_n)_\sigma$ and $g=(g_1\otimes ... \otimes g_n)_{\sigma'}$, to
$$g\circ f=(-1)^{\sgn(f,g,\chi)}( g_{\sigma(1)}\circ f_1 \otimes ... \otimes g_{\sigma(n)}\circ f_n)_{\sigma'\sigma}$$
where $\chi$ is the permutation of
$$(g_1\otimes ... \otimes g_n \otimes f_1 \otimes ... \otimes f_n)$$
into
$$(g_{\sigma(1)}\otimes f_1 \otimes g_{\sigma(2)}\otimes f_2 \otimes ... \otimes g_{\sigma(n)}\otimes f_n)$$
and $\sgn(f,g,\chi)$ is the weighted sign of $\chi$.

The symmetric monoidal structure on $C^\otimes$ is given on objects by multiplication in the free monoid on $\Ob C$ and on morphisms by the inclusion
$$\Hom_{C^\otimes}(a,b)\otimes \Hom_{C^\otimes}(a',b')\rightarrow \Hom_{C^\otimes}(a\otimes a',b\otimes b')$$
$$\left(\bigoplus_{\sigma\in \Sigma_n} \bigotimes_{i=1}^n \Hom_C (a_i,b_{\sigma(i)})\right)\otimes \left( \bigoplus_{\sigma'\in \Sigma_m} \bigotimes_{j=1}^m \Hom_C (a'_j,b'_{\sigma'(j)})  \right)\rightarrow \left( \bigoplus_{\sigma''\in \Sigma_{n+m}} \bigotimes_{l=1}^{n+m} \Hom_C (a_l'',b_{\sigma''(l)}'')\right)$$
where the summand of $\sigma\in \Sigma_n, \sigma'\in \Sigma_m$ lands in the summand of
$$\sigma''=\sigma\times \sigma'\in \Sigma_n\times \Sigma_m\hookrightarrow \Sigma_{n+m}$$
through the canonical inclusion, and an elementary tensor
$$f_1\otimes ... \otimes f_n \otimes f'_1\otimes ... \otimes f'_m$$
is sent to
$$f''_1\otimes ... \otimes f''_{n+m},$$
where $f_l'' \colon  a_l'' \rightarrow b''_{\sigma''(l)}$ is given by
$$\begin{array}{ll} f_l \colon  a_l\rightarrow b_{\sigma(l)} & \text{, if } 1\leq l \leq n \\ f'_{l-n}\colon a'_{l-n}\rightarrow b'_{\sigma'(l-n)} & \text{, if } n+1\leq l \leq n+m \end{array} $$

We check that the monoidal product is functorial, i.e. we wish to verify that the following diagram commutes
. Consider morphisms
$$a^i\xrightarrow{f^i} b^i \xrightarrow{g^i} c^i \quad , \quad i=0,1$$
such that each $f^i,g^i$ is an elementary tensor
$$f^i=(f^i_1 \otimes ... \otimes f^i_n)_{\sigma_i}$$
$$g^i=(g^i_1\otimes ... \otimes g^i_{n})_{\sigma'_i}$$
and compare the operations
$$H_0(g^0,g^1,f^0,f^1):=(g^0\otimes g^1)\circ(f^0\otimes f^1)$$
and
$$H_1(g^0,g^1,f^0,f^1):=(g^0\circ f^0)\otimes (g^1\circ f^1).$$
From the recipes given for composition and monoidal product, we see that $H_0$ and $H_1$ are at least equal up to sign on elementary tensors. The signs are in both cases the weighted sign of the same permutation, so they are equal. It follows that the monoidal product is functorial. Note that the monoidal product is strictly associative and unital, where the unit for the monoidal structure is given by the empty product $\emptyset$.



For $a=a_1\otimes ... \otimes a_n$ and $b=b_1\otimes ... \otimes b_m$, the twist map $\tau\colon a\otimes b\rightarrow b\otimes a$ is given by the elementary tensor
$$(\id_{a_1} \otimes ... \otimes \id_{a_n} \otimes \id_{b_1} \otimes ... \otimes \id_{b_m})_{\tau_{n,m}}$$
where $\tau_{n,m}\in \Sigma_{n+m}$ is the block permutation permuting the first $n$ letters past the latter $m$ letters.

Observe that for any elementary tensor $f=f_1\otimes ... \otimes f_n\in \Hom_{C^\otimes}(a,b)$ and any $\sigma\in\Sigma_n$, conjugation of $f$ by $\sigma$ produces the elementary tensor
$$f_{\sigma(1)}\otimes ... \otimes f_{\sigma(n)}\in \Hom_{C^\otimes}(\sigma\cdot a ,\sigma\cdot b).$$
In particular, for morphisms $f\colon a\rightarrow b$ and $f'\colon a'\rightarrow b'$ in $C^\otimes$, the conjugation of $f\otimes f'$ by $\tau_{n,m}$ is precisely $f'\otimes f$, such that the twist morphism $\tau$ is a symmetry for the monoidal structure.

\bigskip $\bullet$ Let $\vec{\Sigma}$ be the category whose objects are vectors, and whose morphisms are permutations of the entries of vectors. Note that $\vec{\Sigma}$ is concentrated in degree 0, and can also be defined as $\N^\otimes$, the free symmetric monoidal dg-category on the discrete dg-category $\N$. Here by discrete we mean that $\Hom_\N(n,m)=k$ in degree 0 if $n=m$, and zero otherwise. Then there are symmetric monoidal functors $\vec{\Sigma}\rightarrow \tilde{N}^\Sigma$ and $\vec{\Sigma}\rightarrow P^\otimes$ which are the identity on objects.

\bigskip $\bullet$ By viewing the morphism complexes of $P^\otimes$ as being concentrated in horizontal degrees, we give $P^\otimes$ an enrichment in bicomplexes. Similarly, we view $\tilde{N}^\Sigma$ as being enriched in bicomplexes, concentrated in vertical degrees. We define a category enriched in bicomplexes $Q_0(P)$ as follows.

The objects of $Q_0(P)$ are vectors $\vec{k}=(k_1,...,k_n)$. To give a description of the morphism complexes $Q_0(P)(\vec{k},\vec{m})$, we define the following auxiliary notation. For $\vec{k},\vec{m}\in\Ob \, Q_0(P)$ and $j\in\N $, let $\C_{j,l}(\vec{k},\vec{m})$ be given by
$$\C_{j}(\vec{k},\vec{m})=\left\{ \begin{array}{ll} 
P^\otimes(\vec{k},\vec{m}) & \text{, if } j \text{ even}, \\
\tilde{N}^\Sigma (\vec{k},\vec{m}) & \text{, if } j \text{ odd}.
\end{array}\right. $$
Then $Q_0(P)(\vec{k},\vec{m})$ is given by the direct sum
$$Q_0(P)(\vec{k},\vec{m})=\left. \left( \bigoplus_{\substack{i\in\N \\ \vec{x}_1,...,\vec{x}_i\in \Ob Q_0(P) \\ l=0,1 }} \C_{i+l}(\vec{x}_i,\vec{m})\otimes_{\Sigma} \C_{i-1+l}(\vec{x}_{i-1},\vec{x}_i)\otimes_{\Sigma} ... \otimes_\Sigma \C_{l}(\vec{k},\vec{x}_1) \right) \right/ \sim_{red} ,$$
where the tensor product $\otimes_\Sigma$ means taking the colimit of the diagrams
$$\C_{j+l}(\vec{x}_j,\vec{x}_{j+1})\otimes \vec{\Sigma}(\vec{x}_j,\vec{x}_j)\otimes \C_{j-1+l}(\vec{x}_{j-1},\vec{x}_j) \ \substack{\longrightarrow \\ \longrightarrow} \ \C_{j+l}(\vec{x}_j,\vec{x}_{j+1})\otimes \C_{j-1+l}(\vec{x}_{j-1},\vec{x}_j),$$
where $\vec{\Sigma}$ acts via its inclusions into $P^\otimes$ and $\tilde{N}^\Sigma$ described above. The equivalence relation $\sim_{red}$ is generated as an equivalence relation by the following relations:
\begin{enumerate}
\item Permutation morphisms on the ends are absorbed into their neighbour. For example, the word
$$\vec{k}_0\xrightarrow{\chi} \vec{k}_1 \xrightarrow{\phi} \vec{k}_2$$
where $\chi\in\vec{\Sigma}$ and $\phi\in \tilde{N}^\Sigma$ is equivalent to the word
$$k_0 \xrightarrow{\chi\circ \phi} \vec{k}_2$$
using the inclusion $\vec{\Sigma}\rightarrow \tilde{N}^\Sigma$.
\item If a morphism in the middle of a word is a permutation morphism, then it is absorbed into either its left or right hand neighbour.
\item If two adjacent morphisms are either both from $P^\otimes$ of both from $\tilde{N}^\Sigma$, then they are composed.
\end{enumerate}
In other words a morphism in $P$ is a reduced word composed of morphisms in $P^\otimes$ and $\tilde{N}^\Sigma$, where the subcategories $\vec{\Sigma}\subset P^\otimes$ and $\vec{\Sigma}\subset \tilde{N}^\Sigma$ are identified. Composition is given by concatenating words and reducing, similarly to the case of free products of algebras. Note that when we consider a general morphism in $Q_0(P)$ which can be represented by an elementary tensor, we can add identity arrows at the ends as needed to ensure that the first arrow is from $P^\otimes$ and the last is from $\tilde{N}^\Sigma$. When we work with such morphisms later, we will implicitly choose a representative of this form.

As for the enrichment in bicomplexes, $(f)\in P(n,m)$ has bidegree $(|f|,0)$ and $\psi\in \tilde{N}^{\Sigma}(\vec{n},\vec{m})$ has bidegree $(0,|\psi|)$. The horizontal and vertical differentials act on $P$ and $\tilde{N}^{\Sigma}$ respectively. To be precise, the horizontal differential of a word
$$\vec{k}_{0}\xrightarrow{\phi^0}\vec{k}_{1} \xrightarrow{\gamma^0} \vec{k}_{2} \xrightarrow{\phi^1} ... \xrightarrow{\gamma^{m-1}} \vec{k}_{2m}$$
where $\phi^i\in P^{\otimes n_i}(\vec{k}_{2i},\vec{k}_{2i+1})$ and $\gamma^i \in \tilde{N}^\Sigma(\vec{k}_{2i+1},\vec{k}_{2i+2})$, is the sum of applications of the differential to each $\phi^i$, with sign $(-1)^{|\phi^0|+|\gamma^0|+...+|\gamma^{i-i}|}$, and the vertical differential is defined similarly.

There is one exceptional object in $Q_0(P)$, the empty vector $\vec{0}=()$. The morphism space $\Hom_{Q_0}(\vec{0},\vec{0})$ is declared to be the ground ring $k$ in bidegree $(0,0)$. Note that $\vec{0}$ does not admit any morphism to or from a non-empty vector, as generators in $\tilde{N}^\Sigma$ cannot change the length of a vector and generators in $P^\otimes$ cannot change the number of entries.

\bigskip $\bullet$ We write $*$ for the concatenation of vectors. For a morphism $f\colon \vec{k}_0\rightarrow\vec{k}_1$ in $P^\otimes$, and a vector $\vec{m}$, let
$$f\otimes^P\id\colon \vec{k}_0 * \vec{m}\rightarrow \vec{k}_1 * \vec{m}$$
be the tensor product of $f$ and $\id_{\vec{m}}$ using the symmetric monoidal structure of $P^\otimes$. The morphisms $\id \otimes^P f$ is defined similarly. For a morphism $\psi\colon \vec{k}_0\rightarrow \vec{k}_1$ in $\tilde{N}^\Sigma$, we also define
$$\psi\otimes^N \id\colon  \vec{k}_0 * \vec{m}\rightarrow \vec{k}_1 * \vec{m}$$
and similarly $\id\otimes^N \psi$ using the symmetric monoidal structure of $\tilde{N}^\Sigma$.

Let $Q_1(P)$ be the quotient of $Q_0(P)$ with respect to the following relations. For each pair of morphisms $f\colon \vec{k}_0\rightarrow \vec{k}_1$ in $P^\otimes$ and $\psi\colon \vec{m}_0\rightarrow \vec{m}_1$ in $\tilde{N}^\Sigma$, we impose an \emph{interchange relation}, i.e. that the following diagrams commute.
\twosq{\vec{k}_0*\vec{m}_0,\vec{k}_0*\vec{m}_1,\vec{k}_1*\vec{m}_0,\vec{k}_1*\vec{m}_1, \id\otimes^N \psi, f\otimes^P \id, f\otimes^P \id,\id\otimes^N \psi,\vec{m}_0*\vec{k}_0,\vec{m}_1*\vec{k}_0,\vec{m}_0*\vec{k}_1,\vec{m}_1*\vec{k}_1,\psi\otimes^N\id,\id\otimes^P f, \id\otimes^P f, \psi\otimes^N \id}
To be precise, $Q_1(P)$ is the quotient of $Q_0(P)$ with respect to the ideal
$$I=\left( (f\otimes^P \id) \circ (\id\otimes^N \psi) - (\id\otimes^N \psi)\circ(f\otimes^P \id), \right.$$
$$ (f\otimes^P \id)\circ (\id\otimes^N \psi) - (\id\otimes^N \psi)\circ(f\otimes^P \id) \ \left| \ f\in P^\otimes, \psi\in \tilde{N}^\Sigma \right)$$

Note that this relation is an identification of generators which respects bidegrees of morphisms and is compatible with concatenation of words. It also respects the differentials of morphisms, hence the result is a well-defined category enriched in bicomplexes. We expand this notation to general morphisms. Given a morphism $f\colon \vec{k}\rightarrow\vec{m}$ in $Q_1(P)$ represented by a word
$$\vec{k}=\vec{k}_{0}\xrightarrow{\phi^0}\vec{k}_{1} \xrightarrow{\gamma^0} \vec{k}_{2} \xrightarrow{\phi^1} ... \xrightarrow{\gamma^{m-1}} \vec{k}_{2m}=\vec{m}$$
where $\phi^i\in P^{\otimes n_i}(\vec{k}_{2i},\vec{k}_{2i+1})$ and $\gamma^i \in \tilde{N}^\Sigma(\vec{k}_{2i+1},\vec{k}_{2i+2})$, and a vector $\vec{l}$, we write $f\otimes_1 \id$ for the morphism represented by the word
$$(\vec{k}_{0}*\vec{l})\xrightarrow{\phi^0\otimes^P\id} (\vec{k}_1*\vec{l})\xrightarrow{\gamma^0\otimes^N \id} ... \xrightarrow{\gamma^{m-1}\otimes^N\id} (\vec{k}_{2m}*\vec{l})$$
We also define $\id\otimes_1 f$ similarly. Note that if $\vec{l}=\vec{0}$ then
$$f\otimes_1 \id = f = \id\otimes f.$$
To see that this construction is well-defined, take morphisms $\phi\colon \vec{k}_0\rightarrow \vec{k}_1$ and $\phi'\colon \vec{k}_1\rightarrow \vec{k}_2$ in $P^\otimes$ and vectors $\vec{m}_0,\vec{m}_1$. Then the following properties of the operation $-\otimes^P \id$ are immediate.
\begin{enumerate}
\item $\id\otimes^P (\phi'\circ \phi)= (\id\otimes^P \phi')\circ (\id\otimes^P \phi)$ and similarly for $(\phi'\circ\phi)\otimes^P \id)$,
\item for the identity morphism $\id_{\vec{k}}\colon \vec{k}\rightarrow \vec{k}$ in $P^\otimes$, we have $\id\otimes \id_{\vec{k}}=\id_{\vec{m}*\vec{k}}$ and $\id_{\vec{k}}\otimes \id=\id_{\vec{k}*\vec{m}}$ in $P^\otimes$,
\item $((\phi\otimes^P \id)\otimes^P \id)=\phi\otimes^P\id\colon  \vec{k}_0*\vec{m}_0*\vec{m_1}\rightarrow \vec{k}_1*\vec{m}_0*\vec{m_1}$,
\item $(\id\otimes^P (\phi \otimes^P \id))=((\id\otimes^P \phi)\otimes^P\id)\colon \vec{m_0}*\vec{k}_0*\vec{m}_1 \rightarrow \vec{m_0}*\vec{k}_1*\vec{m}_1$,
\item $d(\id \otimes^P f)=\id\otimes^P df$ and $d(f \otimes^P \id)=(df)\otimes^P \id$.
\end{enumerate}
Similar identities hold for morphisms in $\tilde{N}^\Sigma$. We regard property (2) as a definition if $\vec{k}=\vec{0}$. Together these properties imply that if $f,g$ are morphisms $\vec{k}_0\rightarrow \vec{k}_1$ in $Q_0(P)$ such that $f\sim g$ according to the interchange relation, then also $(f\otimes_1 \id)\sim (g\otimes_1 \id)$ and $(\id\otimes_1 f)\sim(\id\otimes_1 g)$, and that the operations $f\mapsto f\otimes^P\id$ and $f\mapsto \id\otimes^P f$, and the same operations for $\tilde{N}^\Sigma$ are well-defined morphisms of bicomplexes.

The interchange relation now implies that
$$(\id\otimes_1 f)\circ (g\otimes_1 \id)=(g\otimes_1 \id)\circ (\id\otimes_1 f)$$
for any pair of morphisms $f\colon \vec{k}_0\rightarrow \vec{k}_1$ and $g\colon \vec{m}_0\rightarrow \vec{m}_1$ in $Q_1(P)$.

Finally, observe that for a pair of morphisms
$$\vec{k}_0\xrightarrow{f_0}\vec{k}_1\xrightarrow{f_1}\vec{k}_2$$
in $Q_1(P)$ and a vector $\vec{m}$, we have
$$(f_1\circ f_0)\otimes_1 \id = (f_1\otimes_1 \id)\circ (f_0\otimes_1 \id)$$
and
$$\id\otimes_1 (f_1\circ f_0) = (\id\otimes_1 f_1)\circ (\id\otimes_1 f_0),$$
such that $(-\otimes_1 \id)$ and $(\id\otimes_1 -)$ are endofunctors on $Q_1(P)$.

\bigskip $\bullet$ Recall that for a pair of categories $\C$ and $\D$ enriched in bicomplexes, their tensor product $\C\otimes \D$ has objects $\Ob (\C\otimes \D)=\Ob\C\times \Ob \D$ and morphisms
$$\Hom_{\C\otimes \D}((a,b),(a',b'))=\Hom_{\C}(a,a')\otimes \Hom_{\D}(b,b').$$

We define a symmetric monoidal structure on $Q_1(P)$ as follows. The functor
$$\otimes_1\colon Q_1(P)\otimes Q_1(P)\rightarrow Q_1(P)$$
is given on objects by concatenating vectors. Given two morphisms $f\colon \vec{k}_0\rightarrow \vec{k}_1$ and $g\colon \vec{m}_0\rightarrow \vec{m}_1$ in $Q_1(P)$, we define the monoidal product
$$f\otimes_1 g\colon \vec{k}_0 * \vec{m}_0 \rightarrow \vec{k_1}*\vec{m_1}$$
by the formula
$$f\otimes_1 g := (\id\otimes_1 g)\circ (f\otimes_1 \id).$$
Since $(-\otimes_1 \id)$ is a well defined operation on morphisms in $Q_1(P)$, this gives rise to a well defined morphism of bicomplexes
$$\Hom_{Q_1(P)}(\vec{k}_0,\vec{k}_1)\otimes \Hom_{Q_1(P)}(\vec{m}_0,\vec{m}_1)\xrightarrow{-\otimes_1 -} \Hom_{Q_1(P)}(\vec{k}_0*\vec{m}_0,\vec{k}_1*\vec{m}_1).$$ 

We check that $\otimes_1$ is a functor. Given morphisms
$$\vec{k}_0 \xrightarrow{f_0}\vec{k}_1\xrightarrow{f_1} \vec{k}_2$$
and
$$\vec{m}_0\xrightarrow{g_0}\vec{m}_1\xrightarrow{g_1}\vec{m}_2,$$
we have
$$(f_1\circ f_0)\otimes_1 (g_1\circ g_0) = (\id\otimes_1 (g_1\circ g_0)\circ ((f_1\circ f_0)\otimes_1 \id)$$
$$ = (\id\otimes_1 g_1)\circ (\id\otimes_1 g_1) \circ (f_1\otimes_1 \id)\circ (f_0\otimes \id).$$
Now the interchange relation implies that
$$(\id\otimes_1 g_1) \circ (f_1\otimes_1 \id)= (f_1\otimes_1 \id)\circ(\id\otimes_1 g_1)$$
as morphisms in $Q_1(P)$, so that
$$(f_1\circ f_0)\otimes_1 (g_1\circ g_0) = (\id\otimes_1 g_1)\circ (f_1\otimes_1 \id)\circ (\id\otimes_1 g_1) \circ (f_0\otimes \id)$$
$$= (f_1\otimes_1 g_1)\circ (f_0\otimes_1 g_0)$$
as claimed.

The unit for the monoidal structure is given by the empty vector $\vec{0}=()$. 

Note also by properties (3) and (4) above that $\otimes_1$ is strictly associative, so $\otimes_1$ defines a strict monoidal structure on $Q_1(P)$.

To give a symmetry for $\otimes_1$, we first describe how the symmetric group $\Sigma_n$ acts on $Q_1(P)^{\otimes n}$. Given an $n$-tuple of vectors $(\vec{k}_1,...,\vec{k}_n)$, the permutation
$$(\sigma,\vec{k}_1,...,\vec{k}_n)\colon \vec{k}_1* ... * \vec{k}_n \rightarrow \vec{k}_{\sigma(1)}* ... * \vec{k}_{\sigma(n)}$$
is given by acting on the source by the block permutation $\sigma_{|\vec{k}_1|,...,|\vec{k}_n|}\in \Sigma_{|\vec{k}_1|+...+|\vec{k}_{n}|}$, i.e. if $\sigma(i)=j$, then for all $1\leq l\leq |\vec{k}_i|$, we have
$$\sigma_{|\vec{k}_1|,...,|\vec{k}_n|}\left( |\vec{k}_1|+...+|\vec{k}_{i-1}|+l \right) = |\vec{k}_1|+...+|\vec{k}_{j-1}|+l.$$
If $\phi\colon \vec{k}_0\rightarrow \vec{k}_1$ is a morphism in $P^\otimes$, $\vec{m}$ is a vector, and $\tau\in\Sigma_2$ is the twist permutation, then in $P^\otimes$ we have
$$(\id\otimes^P \phi)\circ \tau_{|\vec{k}_0|,|\vec{m}|} = \tau_{|\vec{k}_1|,|\vec{m}|}\circ (\phi\otimes^P\id)$$
and
$$(\phi\otimes^P \id)\circ \tau_{|\vec{m}|,|\vec{k}_0|} = \tau_{|\vec{m}|,|\vec{k}_1|}\circ (\id\otimes^P\phi)$$
and similarly for $\gamma\colon \vec{k}_0\rightarrow\vec{k}_1$ in $\tilde{N}^\Sigma$. This implies that for any morphism $f\colon \vec{k}_0\rightarrow \vec{k}_1$ in $Q_1(P)$, we have
$$(\id\otimes_1 f)\circ \tau_{|\vec{k}_0|,|\vec{m}|} = \tau_{|\vec{k}_1|,|\vec{m}|}\circ (f\otimes_1\id)$$
and
$$(f\otimes_1 \id)\circ \tau_{|\vec{m}|,|\vec{k}_0|} = \tau_{|\vec{m}|,|\vec{k}_1|}\circ (\id\otimes_1 f).$$
In particular, for $g\colon \vec{m}_0\rightarrow \vec{m}_1$ another morphism in $Q_1(P)$ we have
$$(g\otimes_1 f)\circ \tau_{|\vec{k}_0|,|\vec{m}_0|} = \tau_{|\vec{k}_1|,|\vec{m}_1|}\circ (f\otimes_1 g)$$
such that $\tau$ is a symmetry for $\otimes_1$.

\end{construction}

\begin{notn}
$\bullet$ We write $(1)^n$ for the vector $(1,...,1)$ of length $n$.

\bigskip $\bullet$ For a vector $\vec{a}=(a_1,...,a_n)$, we write $l(\vec{a})=n$ for its number of elements.
\end{notn}

\begin{rmk}
The following definition describes a way of functorially arranging the entries of a vector $\vec{k}=(k_1,...,k_n)$ (in particular, $l(\vec{k})=n$) according to the entries of a vector $\vec{a}$ of length $n$, which we use to define the functor $Q$. Informally one should think of $\Par_{\vec{k}}(\vec{a})$ as given by arranging the entries of $\vec{k}$ according to the entries of $\vec{a}$. Similarly, for a morphism $\gamma\colon \vec{a}\rightarrow \vec{b}$, one may think of $\Par_{\vec{k}}(\gamma)$ as the natural transformation $N^{\Par_{\vec{k}}(\vec{a})}\rightarrow N^{\Par_{\vec{k}}(\vec{b})}$ of functors $\sMod^{\times k}\rightarrow \Ch_k$ whose $(A_1,...,A_k)$-component equals the $(A_1\hat{\otimes} ... \hat{\otimes} A_{k_1} \, , \, ... \, , \, A_{k_1+...+k_{n-1}+1}\hat{\otimes} ... \hat{\otimes} A_k)$-component of $\gamma$.
\end{rmk}

\begin{defn} \label{defn-vector-partition}
For each vector $\vec{k}=(k_1,...,k_n)$, $k=|\vec{k}|$, let $\iota_{\vec{k}}\colon \sMod^{\times k}\rightarrow \sMod^{\times n}$ be the functor taking a $k$-tuple $(A_1,...,A_k)$ to the $n$-tuple $(B_1,...,B_n)$ where
$$B_i=A_{k_1+...+k_{i-1}+1}\hat{\otimes} ... \hat{\otimes} A_{k_1+...+k_i}.$$
Writing $N_n$ for the full subcategory of $\tilde{N}$ on the objects $\vec{a}$ with $|\vec{a}|=n$, let
$$\Par_{\vec{k}}\colon N_n\rightarrow N_k$$
be the functor taking $\vec{a}=(a_1,...,a_l)$ to
$$\Par_{\vec{k}}(\vec{a}):=(k_1+...+k_{a_1}\, , \, k_{a_1+1}+...+k_{a_1+a_2} \, , \, ... \, , \, k_{a_1+...+a_{l-1}+1} + ... + k_{n})$$
i.e.~the unique vector such that $N^{\vec{a}}\circ \iota_{\vec{k}} = N^{\Par_{\vec{k}}(\vec{a})}$. In particular we have $|\Par_{\vec{k}}(\vec{a})|=|\vec{k}|$ and $l(\Par_{\vec{k}}(\vec{a}))=l(\vec{a})$.
For a morphism $\gamma\colon \vec{a}\rightarrow \vec{b}$, $\Par_{\vec{k}}(\gamma)$ is given by $\gamma * \id_{\iota_{\vec{k}}}$. In particular, the following diagram commutes.
\begin{center}
\begin{tikzpicture}
\node (A) at (0,0) {$\sMod^{\times k}$};
\node (B) at (8,0) {$\sMod^{\times n}$};
\node (C) at (4,-3) {$\Ch_k$};

\node at (1.5,-1) {${\scriptsize \Par_{\vec{k}}(\gamma)}$};
\node at (2,-1.5) {\rotatebox{230}{$\Longrightarrow$}};
\node at (6.3,-1.2) {${\scriptsize \gamma}$};
\node at (6,-1.5) {\rotatebox{310}{$\Longrightarrow$}};

\path[->,font=\scriptsize,>=angle 90]
(A) edge [bend left=10] node[auto] {$\iota_{\vec{k}}$} (B)
(B) edge [bend left=30] node[auto]  {$N^{\vec{b}}$} (C)
(B) edge [bend right=30] node[above left]  {$N^{\vec{a}}$} (C)
(A) edge [bend left=30] node[auto] {$N^{\Par_{\vec{k}}(\vec{a})}$} (C)
(A) edge [bend right=30] node[below left] {$N^{\Par_{\vec{k}}(\vec{b})}$} (C);
\end{tikzpicture}
\end{center}

In particular, for any vector $\vec{a}$, we have $\Par_{\vec{a}}((1)^{l(\vec{a})})=\vec{a}$.
\end{defn}

\begin{construction} \label{const-Q-2}
$\bullet$ Let $\vec{a}=(a_1,...,a_l)$ be an object of $N_n$ (i.e. $|\vec{a}|=n$) and $f=f_1\otimes ... \otimes f_n\colon \vec{k}\rightarrow \vec{m}$ a morphism in $P^\otimes$, where $l(\vec{k})=l(\vec{m})=n$. For each $|\leq i \leq l$ we have a morphism
$$(k_{a_1+ ... a_i+1} + ... + k_{a_1+...a_{i+1}})\xrightarrow{(f_{a_1+ ... a_i+1} \otimes ... \otimes f_{a_1+...a_{i+1}})} (m_{a_1+ ... a_i+1} + ... + m_{a_1+...a_{i+1}})$$ 
in $P$, and we write $\Par_f(\vec{a})\colon \Par_{\vec{k}}(\vec{a})\rightarrow \Par_{\vec{m}}(\vec{a})$ for the tensor product in $P^\otimes$ of these morphisms for $1\leq i \leq l$. Similarly to Definition \ref{defn-vector-partition}, one can think of this as grouping the factors of $f$ according to the entries of $\vec{a}$. Below is an example where $\vec{a}=(1,3)$. In this example,
$$\Par_f(\vec{a})=\Par_f(\vec{a})_1\otimes^P \Par_f(\vec{a})_2$$
with
\[
  \begin{array}{@{} c @{}}
    \begin{array}{@{} r @{}}
      a_1=1 ~\{\hspace{\nulldelimiterspace} \\
      a_2=3 ~\left\{\begin{array}{@{}c@{}}\null\\\null\\\null\end{array}\right.
    \end{array}

      \begin{array}{ *{1}{c} }
        f_1\colon k_1\rightarrow m_1 \\
        f_2\colon k_2\rightarrow m_2 \\
        f_3\colon k_3\rightarrow m_3 \\
        f_4\colon k_4\rightarrow m_4 \\
      \end{array}

    \begin{array}{@{} l @{} l}
       \}\hspace{\nulldelimiterspace}~ & \Par_f(\vec{a})_1=f_1\colon k_1\rightarrow m_1 \\
       \left\}\begin{array}{@{}c@{}}\null\\\null\\\null\end{array}\right. ~ & \Par_f(\vec{a})_2=\bigotimes_{i=2}^4 f_i \colon \sum_{i=2}^4 k_i\rightarrow \sum_{i=2}^4 m_i.
    \end{array}
 \\
    \mathstrut
  \end{array}
\]

Let $Q(P)$ be the quotient of $Q_1(P)$ with respect to the ideal generated by the following relations. For each morphism $\gamma\colon \vec{a}\rightarrow \vec{b}$ in $N_n$ and each morphism $f\colon \vec{k}\rightarrow \vec{m}$ in $P^\otimes$ with $l(\vec{k})=l(\vec{m})=n$, the following diagram commutes:

\sq{\Par_{\vec{k}}(\vec{a}),\Par_{\vec{m}}(\vec{a}),\Par_{\vec{k}}(\vec{b}),\Par_{\vec{m}}(\vec{b}),\Par_{f}(\vec{a}),\Par_{\vec{m}}(\gamma),\Par_{\vec{k}}(\gamma),\Par_{f}(\vec{b})}

We call this the \emph{partition relation}. If two morphisms $f,g$ are equivalent under the partition relation, we write $f\sim_{\Par} g$. The partition relation captures in a more general manner the naturality of shuffle and Alexander-Whitney maps with respect to algebra homomorphisms, see Observation \ref{rmk-par-example}. 

These relations are compatible with concatenations of words and respect bidegrees of morphisms. They are also compatible with horizontal and vertical differentials as the relation only depends on the source and target of morphisms, hence the result is a well-defined category enriched in bicomplexes. 

To see that the symmetric monoidal structure is compatible with these relations, observe that
$$\Par_{\vec{k}}(\vec{a})*\vec{r}=\Par_{\vec{k}*\vec{r}}(\vec{a}*(1)^{l(\vec{r})}),$$

which implies that the following diagrams are equal and commute.

\begin{center}
\begin{tikzpicture}
  \matrix (m) [matrix of math nodes,row sep=3em,column sep=6em,minimum width=2em,
  text height=1.5ex, text depth=0.25ex]
  { \Par_{\vec{k}}(\vec{a})*\vec{r} & \Par_{\vec{m}}(\vec{a})*\vec{r} \\
    \Par_{\vec{k}}(\vec{b})*\vec{r} & \Par_{\vec{m}}(\vec{b})*\vec{r} \\};
  \path[-stealth,font=\scriptsize]
    (m-1-1) edge node [auto] {$\Par_{f}(\vec{a})\otimes^P\id$} (m-1-2)
            edge node [left] {$\Par_{\vec{k}}(\gamma)\otimes^N\id$} (m-2-1)
    (m-1-2) edge node [right] {$\Par_{\vec{m}}(\gamma)\otimes^N\id$} (m-2-2)
    (m-2-1) edge node [below] {$\Par_{f}(\vec{b})\otimes^P\id$} (m-2-2);
\end{tikzpicture}
\end{center}

\begin{center}
\begin{tikzpicture}
  \matrix (m) [matrix of math nodes,row sep=3em,column sep=6em,minimum width=2em,
  text height=1.5ex, text depth=0.25ex]
  { \Par_{\vec{k}*\vec{r}}(\vec{a}*(1)^{l(\vec{r})}) & \Par_{\vec{m}*\vec{r}}(\vec{a}*(1)^{l(\vec{r})}) \\
    \Par_{\vec{k}*\vec{r}}(\vec{b}*(1)^{l(\vec{r})}) & \Par_{\vec{m}*\vec{r}}(\vec{b}*(1)^{l(\vec{r})}) \\};
  \path[-stealth,font=\scriptsize]
    (m-1-1) edge node [auto] {$\Par_{f\otimes^P\id}(\vec{a}*(1)^{l(\vec{r})})$} (m-1-2)
            edge node [left] {$\Par_{\vec{k}*\vec{r}}(\gamma\otimes^N \id)$} (m-2-1)
    (m-1-2) edge node [right] {$\Par_{\vec{m}*\vec{r}}(\gamma\otimes^{N}\id)$} (m-2-2)
    (m-2-1) edge node [below] {$\Par_{f\otimes^P \id}(\vec{b}*(1)^{l(\vec{r})})$} (m-2-2);
\end{tikzpicture}
\end{center}

The same statement also holds for tensoring with $\id$ on the left.

In particular this implies that if $f,g\colon \vec{k}_0\rightarrow \vec{k}_1$ are morphisms in $Q_1(P)$ such that $f\sim_{\Par} g$, and $\vec{m}$ is a vector, then
$$(f\otimes_1 \id)\sim_{\Par} (g\otimes_1 \id)$$
and
$$(\id\otimes_1 f)\sim_{\Par} (\id\otimes_1 g),$$
which in turn implies that the monoidal product respects the partition relation. It follows that the symmetric monoidal structure on $Q_1(P)$ descends to a symmetric monoidal structure on $Q(P)$.

\bigskip $\bullet$ Let $P,P'$ be dg-props and let $g\colon P\rightarrow P'$ be a morphism of dg-props. The functor $g$ induces a symmetric monoidal functor $g^\otimes\colon P^\otimes\rightarrow P'^{\otimes}$. Define $Q_0(g)\colon Q_0(P)\rightarrow Q_0(P')$ to be the induced functor.It is the identity on objects and on generators coming from $\tilde{N}^\Sigma$, and acts by $g^\otimes$ on generators coming from $P^\otimes$. This functor induces a symmetric monoidal functor $Q(P)\rightarrow Q(P')$. To see this, observe that since $g$ is a prop morphism, we have $Q_0(g)(f\otimes_1 \id)=Q_0(g)(f)\otimes_1 \id$ for any morphism in $Q_0(P)$ and $Q(\Par_{f}(\vec{a}))=\Par_{g^\otimes(f)}(\vec{a})$ for any morphism in $P^\otimes$. This implies that $Q_0(g)$ descends to a functor $Q(g)\colon Q(P)\rightarrow Q(P')$. Symmetric monoidality of this functor can be seen by observing that
$$Q(g)(f_0)\otimes_1 Q(g)(f_1)=(\id \otimes_1 Q(g)(f_1))\circ (Q(g)(f_0)\otimes_1\id)$$
$$=  Q(g)(\id \otimes_1 f_1)\circ Q(g)(f_0\otimes_1 \id) = Q(g)\left((\id \otimes_1 f_1)\circ (f_0\otimes_1 \id)\right)=Q(g)(f_0\otimes_1 f_1)$$
and that since $Q(g)$ is the identity on $\tilde{N}^\Sigma$, the image of the twist morphism in $Q(P)$ is the twist morphism in $Q(P')$.

\end{construction}

\begin{lem}
The canonical functors $P^\otimes\rightarrow Q(P)$ and $\tilde{N}^\Sigma\rightarrow Q(P)$ are symmetric monoidal.
\end{lem}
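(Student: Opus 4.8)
The plan is to show that both functors are in fact \emph{strict} symmetric monoidal, which certainly implies the claim. Write $\iota_P\colon P^\otimes\rightarrow Q(P)$ and $\iota_N\colon \tilde{N}^\Sigma\rightarrow Q(P)$ for the two canonical functors; I would describe each as the identity on objects (all three categories have vectors as objects) and as sending a morphism to the reduced word consisting of that single generator, padded with identity arrows at the ends to match the standing convention. Since the monoidal products on $P^\otimes$, $\tilde{N}^\Sigma$ and $Q(P)$ all act on objects by concatenation of vectors, the coherence isomorphisms are identities, so I would reduce the statement to three verifications: that each functor respects composition and differentials, that it carries the monoidal product of morphisms to $\otimes_1$, and that it carries the twist to the twist.

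First I would dispatch functoriality using the reduction relation (3) defining $\sim_{red}$: two adjacent letters that are both from $P^\otimes$ (respectively both from $\tilde{N}^\Sigma$) are composed, giving $\iota_P(g)\circ\iota_P(f)=\iota_P(g\circ f)$ and likewise for $\iota_N$. Preservation of bidegree and of the horizontal and vertical differentials is immediate from the construction, since a $P^\otimes$-generator sits in bidegree $(|f|,0)$ with the horizontal differential acting as in $P^\otimes$, and dually for $\tilde{N}^\Sigma$.

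The heart of the argument is the monoidal compatibility, and here I would lean on one observation: the operation $f\mapsto f\otimes_1\id$ on $Q(P)$ was defined on a word by applying $-\otimes^P\id$ to each $P^\otimes$-letter and $-\otimes^N\id$ to each $\tilde{N}^\Sigma$-letter. Applied to the single-letter word $\iota_P(f)$ this reads $\iota_P(f)\otimes_1\id=\iota_P(f\otimes^P\id)$ and $\id\otimes_1\iota_P(f)=\iota_P(\id\otimes^P f)$. Combining the definition $f\otimes_1 g=(\id\otimes_1 g)\circ(f\otimes_1\id)$ with the interchange law $f\otimes^P g=(\id\otimes^P g)\circ(f\otimes^P\id)$, which holds in the strict monoidal category $P^\otimes$, I would compute
$$\iota_P(f)\otimes_1\iota_P(g)=(\id\otimes_1\iota_P(g))\circ(\iota_P(f)\otimes_1\id)=\iota_P(\id\otimes^P g)\circ\iota_P(f\otimes^P\id)=\iota_P(f\otimes^P g),$$
the last equality by functoriality of $\iota_P$; the identical computation with $\otimes^N$ replacing $\otimes^P$ handles $\iota_N$. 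For the symmetry I would note that the twist in $Q(P)$ is built from the block permutations $\tau_{m,n}$ of $\vec{\Sigma}$, which are identified under $\vec{\Sigma}\hookrightarrow P^\otimes$ and $\vec{\Sigma}\hookrightarrow\tilde{N}^\Sigma$; since the twist in $P^\otimes$ is exactly the permutation morphism $(\id\otimes\cdots\otimes\id)_{\tau_{n,m}}$, its image under $\iota_P$ is the twist of $Q(P)$, and the same holds for $\iota_N$.

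I do not expect a genuine obstacle: the only care needed is to confirm that these identities survive the successive quotients $Q_0(P)\rightarrow Q_1(P)\rightarrow Q(P)$ and the padding-by-identities convention, and this is precisely what Constructions \ref{const-Q} and \ref{const-Q-2} already arrange, where $-\otimes^P\id$ and $-\otimes^N\id$ are shown to be well defined and compatible with the interchange and partition relations. In this sense the whole lemma amounts to the bookkeeping observation that $\otimes_1$, restricted to either generating subcategory, coincides with the native products $\otimes^P$ and $\otimes^N$.
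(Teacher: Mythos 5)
Your proposal is correct and follows essentially the same route as the paper's proof: functoriality via the reduction relation $\sim_{red}$ that composes adjacent letters from the same generating category, preservation of the twist because it lives in the common subcategory $\vec{\Sigma}$, and preservation of the monoidal product via the identity $(\id\otimes^P g)\circ(f\otimes^P\id) = f\otimes^P g$ matched against the definition $f\otimes_1 g=(\id\otimes_1 g)\circ(f\otimes_1\id)$. The paper compresses your three-step computation into the single observation $(\id\otimes^P f_1)\circ(f_0\otimes^P\id)\sim_{red} f_0\otimes^P f_1$, but the content is identical.
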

\begin{proof}
Let $F\colon P^\otimes\rightarrow Q(P)$ be the canonical functor taking a morphism $f\colon \vec{k}\rightarrow \vec{m}$ to the morphism represented by the singleton word $f$. Note that for $g\colon \vec{m}\rightarrow \vec{l}$ another morphism in $P^\otimes$, we have
$$F(g)\circ F(f)\sim_{red} F(g\circ f)$$
such that this is indeed a functor. Since the twist morphism in $Q(P)$ lives in $\vec{Sigma}$, the functor $F$ takes the twist morphism of $P^\otimes$ to the twist morphism of $Q(P)$. To see that $F$ is symmetric monoidal, it is therefore sufficient to observe that for a pair of morphisms $f_i\colon \vec{k}_{0,i}\rightarrow \vec{m}_i$, $i=0,1$ in $P^\otimes$, we have
$$(\id\otimes^P f_1)\circ(f_0\otimes^P \id)\sim_{red} f_0\otimes^P f_1,$$
such that $F(f_0)\otimes_1 F(f_1)=F(f_0\otimes^P f_1)$ in $Q(P)$. The case for $\tilde{N}^\Sigma$ is identical.
\end{proof}

\begin{obs} \label{rmk-par-example}
The shuffle and Alexander-Whitney maps introduced in Notation \ref{notn-sh-aw} are special cases of the partition construction. In particular, if $\vec{k}$ has $l(\vec{k})=n$, then
$$\sh_{\vec{k}}=\Par_{\vec{k}}(\sh_n):\vec{k}\rightarrow (|\vec{k}|)$$
and similarly
$$AW_{\vec{k}}=\Par_{\vec{k}}(AW_n):(|\vec{k}|)\rightarrow \vec{k}.$$
The partition relations imply that if $\vec{k}_0$ and $\vec{k}_1$ are vectors with $l(\vec{k}_0)=l(\vec{k}_1)=n$  and $f=(f_1,...,f_n)\colon \vec{k}_0\rightarrow \vec{k}_1$ is a morphism in $P^\otimes$, then the following squares commute.
\begin{center}
\begin{tikzpicture}
\node (A1) at (-3,0) {$\vec{k}_0$};
\node (B1) at (0,0) {$\vec{k}_1$};
\node (C1) at (-3,-2) {$(|\vec{k}_0|)$};
\node (D1) at (0,-2) {$(|\vec{k}_1|)$};
\node (A2) at (3,0) {$(|\vec{k}_{0}|)$};
\node (B2) at (6,0) {$(|\vec{k}_{1}|)$};
\node (C2) at (3,-2) {$\vec{k}_{0}$};
\node (D2) at (6,-2) {$\vec{k}_{1}$};

\path[->,font=\scriptsize,>=angle 90]
(A1) edge node[above] {$f$} (B1)
(A1) edge node[left] {$\Par_{\vec{k}_0}(\sh_n)=\sh_{\vec{k}_{0}}$} (C1)
(B1) edge node[auto] {$\sh_{\vec{k}_{1}}$} (D1)
(C1) edge node[below] {$\Par_{f}((n))$} (D1);

\path[->,font=\scriptsize,>=angle 90]
(A2) edge node[above] {$\Par_{f}((n))$} (B2)
(A2) edge node[left] {$AW_{\vec{k}_{0}}$} (C2)
(B2) edge node[auto] {$AW_{\vec{k}_{1}}=\Par_{\vec{k}_1}(AW_n)$} (D2)
(C2) edge node[below] {$f$} (D2);
\end{tikzpicture}
\end{center}
\end{obs}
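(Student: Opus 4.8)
The plan is to exhibit both squares as instances of the partition relation imposed in Construction \ref{const-Q-2}, once $\sh_{\vec{k}}$ and $AW_{\vec{k}}$ have been identified with partition morphisms. First I would record the relevant endpoint computations from Definition \ref{defn-vector-partition}: if $l(\vec{k})=n$, then $\Par_{\vec{k}}((1)^n)=\vec{k}$ (the final assertion of that definition), while the single block $(n)$ groups every entry of $\vec{k}$ into its sum, so $\Par_{\vec{k}}((n))=(|\vec{k}|)$. Hence, viewing $\sh_n\colon (1)^n\to(n)$ and $AW_n\colon (n)\to(1)^n$ as morphisms of $N_n$, the functor $\Par_{\vec{k}}$ sends them to morphisms $\vec{k}\to(|\vec{k}|)$ and $(|\vec{k}|)\to\vec{k}$, matching the asserted sources and targets of $\sh_{\vec{k}}$ and $AW_{\vec{k}}$.

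Next I would establish the identities $\sh_{\vec{k}}=\Par_{\vec{k}}(\sh_n)$ and $AW_{\vec{k}}=\Par_{\vec{k}}(AW_n)$ by unwinding the formula $\Par_{\vec{k}}(\gamma)=\gamma*\id_{\iota_{\vec{k}}}$ against the commuting triangle of Definition \ref{defn-vector-partition}. By that formula, the $(A_1,\dots,A_k)$-component of $\Par_{\vec{k}}(\sh_n)$ is the value of $\sh_n$ at $\iota_{\vec{k}}(A_1,\dots,A_k)=(B_1,\dots,B_n)$, that is, the $n$-fold shuffle $N(B_1)\otimes\cdots\otimes N(B_n)\to N(B_1\hat{\otimes}\cdots\hat{\otimes} B_n)$. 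By Notation \ref{notn-sh-aw}, $\sh_{\vec{k}}$ is precisely this iterated shuffle on the grouped variables $B_i$, which is well defined independently of bracketing thanks to the associativity of Lemma \ref{lem-shuffle-combinatorics}; so the two transformations agree componentwise. The argument for $AW$ is the same, with Lemma \ref{lem-AW-associative} replacing Lemma \ref{lem-shuffle-combinatorics}.

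Finally I would substitute these identifications into the defining square of Construction \ref{const-Q-2}. Taking $\gamma=\sh_n\colon (1)^n\to(n)$, $\vec{k}=\vec{k}_0$ and $\vec{m}=\vec{k}_1$, its corners become $\vec{k}_0,\vec{k}_1,(|\vec{k}_0|),(|\vec{k}_1|)$, its vertical edges become $\Par_{\vec{k}_0}(\sh_n)=\sh_{\vec{k}_0}$ and $\Par_{\vec{k}_1}(\sh_n)=\sh_{\vec{k}_1}$, and its horizontal edges become $\Par_f((1)^n)$ and $\Par_f((n))$. Since the block vector $(1)^n$ groups each tensor factor of $f$ singly, $\Par_f((1)^n)=f$, so the commuting partition square is exactly the first diagram of the statement. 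Running the same substitution with $\gamma=AW_n\colon (n)\to(1)^n$, which simply interchanges the two rows, produces the second diagram.

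The point deserving care is the componentwise identification $\sh_{\vec{k}}=\Par_{\vec{k}}(\sh_n)$ and its $AW$ analogue: one must check that the iterated shuffle used to define $\sh_{\vec{k}}$ in Notation \ref{notn-sh-aw} really coincides with the shuffle on the grouped variables $B_i$ produced by horizontal composition with $\iota_{\vec{k}}$, which is exactly where the coherence of Lemmas \ref{lem-shuffle-combinatorics} and \ref{lem-AW-associative} enters. Beyond this bookkeeping there is no genuine obstacle, as the squares are then literal instances of the partition relation.
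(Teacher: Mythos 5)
Your proposal is correct and takes exactly the route the paper intends: the paper states this as an observation with no separate proof, treating as immediate precisely the two steps you spell out, namely the componentwise identification $\sh_{\vec{k}}=\Par_{\vec{k}}(\sh_n)$ and $AW_{\vec{k}}=\Par_{\vec{k}}(AW_n)$ via $\Par_{\vec{k}}(\gamma)=\gamma*\id_{\iota_{\vec{k}}}$, followed by specializing the partition relation of Construction \ref{const-Q-2} to $\gamma=\sh_n$ and $\gamma=AW_n$ using $\Par_{\vec{k}_i}((1)^n)=\vec{k}_i$, $\Par_{\vec{k}_i}((n))=(|\vec{k}_i|)$, and $\Par_f((1)^n)=f$. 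Your care about the coherence input (Lemmas \ref{lem-shuffle-combinatorics} and \ref{lem-AW-associative} making $\sh_{\vec{k}}$ and $AW_{\vec{k}}$ well defined independently of bracketing) is exactly the point the paper's Notation \ref{notn-sh-aw} relies on.
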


\begin{obs}
$\bullet$ For any $n$-tuple $f=(f_1,...,f_n)$ of morphisms in $P$, we have $\Par_f((1)^n)=f$.

$\bullet$ For a sequence
$$\vec{k}_0\xrightarrow{f}\vec{k}_1\xrightarrow{g}\vec{k}_2$$
of morphisms in $P^\otimes$, where the $l(\vec{k}_i)=n$ and $\vec{a}\in N_n$ we have $\Par_g(\vec{a})\circ \Par_f(\vec{a})=\Par_{g\circ f}(\vec{a})$.
\end{obs}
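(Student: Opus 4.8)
The plan is to prove both bullets by unwinding the definition of $\Par_f(\vec a)$ from Construction~\ref{const-Q-2} and reducing to elementary tensors. For the first bullet I would simply specialize to $\vec a = (1)^n$. Since every entry of $(1)^n$ equals $1$, the partial sums $a_1 + \cdots + a_i = i$ are consecutive, so the $i$-th block consists of the single factor $f_i$ and carries no internal $\otimes_P$. Hence the tensor product in $P^\otimes$ of the blocks is literally $f_1 \otimes \cdots \otimes f_n = f$, and the matching of source and target is the special case $\Par_{\vec a}((1)^{l(\vec a)}) = \vec a$ already recorded at the end of Definition~\ref{defn-vector-partition}. This bullet is therefore immediate.

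For the second bullet I would first use bilinearity of composition and of the partition construction to reduce to elementary tensors $f = f_1 \otimes \cdots \otimes f_n$ and $g = g_1 \otimes \cdots \otimes g_n$ in the identity summand of $P^\otimes$, which is the form on which $\Par$ is defined; note that the composition formula of Construction~\ref{const-Q} keeps $g \circ f$ in the identity summand, with $i$-th factor $g_i \circ f_i$ up to a Koszul sign. Writing $s_0 = 0$ and $s_i = a_1 + \cdots + a_i$, the morphism $\Par_f(\vec a)$ is the elementary tensor whose $i$-th factor is the $P$-morphism $F_i = f_{s_{i-1}+1} \otimes_P \cdots \otimes_P f_{s_i}$, and similarly $G_i$ is built from $g$. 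Composing $\Par_g(\vec a) \circ \Par_f(\vec a)$ in $P^\otimes$ produces the block-wise composites $G_i \circ F_i$, and expanding each by the interchange law (functoriality of the monoidal product $\otimes_P$ of $P$) rewrites $G_i \circ F_i$ as the $\otimes_P$-product of the $g_j \circ f_j$ over the $j$ in block $i$. This is exactly the $i$-th factor of $\Par_{g \circ f}(\vec a)$, so the two morphisms agree up to an overall sign.

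The main obstacle is therefore purely the sign bookkeeping. The total sign accumulated in $\Par_g(\vec a) \circ \Par_f(\vec a)$ is the weighted sign of the block-level interleaving of $(G_1, \dots, G_l, F_1, \dots, F_l)$ into $(G_1, F_1, \dots, G_l, F_l)$, plus the sum over $i$ of the weighted signs of the within-block interleavings; the sign in $\Par_{g \circ f}(\vec a)$ is instead the single $\sgn(f,g,\chi)$ of Construction~\ref{const-Q}, namely the weighted sign of the full interleaving of $(g_1, \dots, g_n, f_1, \dots, f_n)$ into $(g_1, f_1, \dots, g_n, f_n)$. I would conclude by observing that this full interleaving permutation factors precisely as the block-level interleaving followed by the within-block interleavings, and that weighted Koszul signs are multiplicative under composition of permutations; hence the two signs coincide modulo $2$ and the identity $\Par_g(\vec a) \circ \Par_f(\vec a) = \Par_{g \circ f}(\vec a)$ follows. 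Conceptually this merely records that $f \mapsto \Par_f(\vec a)$ is the morphism part of the symmetric monoidal regrouping functor determined by $\vec a$, with all signs already controlled by the interchange law verified for $P^\otimes$.
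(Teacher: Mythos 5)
Your proof is correct. The paper states this as an observation with no proof at all, treating both bullets as immediate from the definition of $\Par$; your unwinding of that definition --- specialization to $\vec{a}=(1)^n$ for the first bullet, and for the second the block-wise composition in $P^\otimes$, the interchange law in $P$, and the factorization of the full weighted interleaving sign into the block-level sign plus the within-block signs --- is exactly the verification the paper leaves implicit, so there is nothing to compare beyond noting that your sign bookkeeping does check out.
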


\begin{defn}
For $\C$ a category enriched in bicomplexes, let $\text{Tot}(\C)$ be the dg-category whose morphism complexes are the $\oplus$-totalization of the morphism bicomplexes in $\C$.
\end{defn}

\begin{lem} \label{defn-Q-projection}
There is a natural symmetric monoidal functor $F\colon \text{Tot}(Q(P))\rightarrow P$ defined on objects by taking $\vec{k}$ to $|\vec{k}|$, and on morphisms by taking $f=(f_1,...,f_n)\colon \vec{k}\rightarrow \vec{m}$ in $P^\otimes(\vec{k},\vec{m})$ to $\Par_{f}((n))\colon |\vec{k}|\rightarrow |\vec{m}|$ in $P$, and $\gamma\colon  \vec{k}\rightarrow \vec{m}$ in $\tilde{N}(\vec{k},\vec{m})_i$ to $\id_{|\vec{k}|}$ if $i=0$ and $0$ otherwise.
\end{lem}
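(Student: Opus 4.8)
The plan is to specify $F$ on the generators of $Q(P)$ exactly as in the statement, extend it to reduced words by functoriality, and then verify successively that it descends through the quotients $Q_0(P)\to Q_1(P)\to Q(P)$ of Construction \ref{const-Q}, that it is a chain map after totalizing, and that it is symmetric monoidal and natural in $P$. It is cleanest to read the action on the Dold--Kan generators through the contraction of Section 2: by Lemmas \ref{lem-tildeN-contract} and \ref{lem-doldkan-contractibility-1} the complex $\tilde{N}^\Sigma(\vec{k},\vec{m})$ is acyclic in positive degrees with $H_0=k\Sigma_{|\vec{k}|}$, so I take $F$ on a $\tilde{N}^\Sigma$-generator to be the augmentation $\tilde{N}^\Sigma(\vec{k},\vec{m})\to k\Sigma_{|\vec{k}|}$ followed by the symmetry inclusion $\Sigma_{|\vec{k}|}\hookrightarrow P(|\vec{k}|,|\vec{m}|)$. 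On the identity component $\tilde{N}=\tilde{N}^\Sigma_e$ this reproduces the stated assignment ($\id_{|\vec{k}|}$ in degree zero and $0$ in positive degrees).

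The first well-definedness check is against $\sim_{red}$. Relation $(3)$ is immediate from functoriality, and relations $(1)$ and $(2)$ (absorption of a permutation into a neighbour) hold once $F$ is seen to agree on the two copies of $\vec{\Sigma}$. This I verify directly: an entry-permutation $\sigma$ of $\vec{k}$ is sent along the $P^\otimes$-route to $\Par_\sigma((l(\vec{k})))$, which is exactly the block-permutation symmetry $\sigma_{\vec{k}}\in\Sigma_{|\vec{k}|}\subset P$, and along the $\tilde{N}^\Sigma$-route to the same element through the augmentation. For the interchange relation defining $Q_1(P)$, the functor $F$ sends $f\otimes^P\id$ to $F(f)\otimes_P\id$ and $\id\otimes^N\psi$ to $\id\otimes_P F(\psi)$, so both composites in the interchange square equal $F(f)\otimes_P F(\psi)$ by the interchange (functoriality of $\otimes$) law already holding in the symmetric monoidal category $P$.

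The core of the argument is compatibility with the partition relation. Using that collapsing the factors of a $P^\otimes$-morphism in two stages agrees with collapsing them at once, associativity of the monoidal structure of $P$ gives, for $f\colon\vec{k}\to\vec{m}$ with $l(\vec{k})=n$ and any $\vec{a}\in N_n$,
$$F(\Par_f(\vec{a}))=\Par_{\Par_f(\vec{a})}((l(\vec{a})))=\Par_f((n))=F(f),$$
independently of $\vec{a}$, and likewise for $\vec{b}$. On the other hand $\Par_{\vec{k}}(\gamma)=\gamma*\id_{\iota_{\vec{k}}}$ and $\Par_{\vec{m}}(\gamma)$ lie in $\tilde{N}^\Sigma$ with $\gamma\in N_n\subset\tilde{N}$, so their images under $F$ are the same scalar multiple $c\cdot\id$ of an identity. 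Applying $F$ to the partition square of Construction \ref{const-Q-2} therefore turns it into $\Par_f((n))\circ(c\cdot\id)=(c\cdot\id)\circ\Par_f((n))$, which commutes; hence $F$ respects $\sim_{\Par}$ and descends to $Q(P)$.

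It remains to check the chain-map, monoidal, and naturality properties. After totalizing, the differential on $Q(P)$ splits into a horizontal part acting on the $P^\otimes$-factors and a vertical part acting on the $\tilde{N}^\Sigma$-factors; since $\Par_{(-)}((n))$ and prop composition are chain maps, $F$ commutes with the horizontal part. The main obstacle is the vertical part: a word containing a single degree-one $\tilde{N}^\Sigma$-factor is killed by $F$, so the chain-map identity forces $F$ to annihilate its boundary, which has all its $\tilde{N}^\Sigma$-factors in degree zero and could a priori survive. This is precisely where the homological input of Section 2 enters: factoring $F$ through the augmentation onto $H_0=k\Sigma_{|\vec{k}|}$ annihilates every boundary by the acyclicity of Lemma \ref{lem-doldkan-contractibility-1}, so that $F(d_v\psi)=0=d\,F(\psi)$ and $F$ is a chain map. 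Symmetric monoidality then follows because $F$ carries concatenation of vectors to addition in $\N$ and the twist morphism, which lies in $\vec{\Sigma}$, to the symmetry of $P$; and naturality in $P$ holds because, for a prop morphism $g$, the functor $Q(g)$ is the identity on $\tilde{N}^\Sigma$ and acts by $g^\otimes$ on $P^\otimes$, while $g$ intertwines the partition collapse $\Par_{(-)}((n))$ by the identities recorded in Construction \ref{const-Q-2}.
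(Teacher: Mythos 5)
Your proposal is correct, and its overall skeleton (define $F$ on generators, check $\sim_{red}$, the interchange relation, and the partition relation via the identity $\Par_{\Par_f(\vec{a})}((l(\vec{a})))=\Par_f((n))$, then the differential, monoidality, and naturality in $P$) coincides with the paper's proof. The genuine difference is your decision to read $F$ on $\tilde{N}^\Sigma$-factors as the chain map $\tilde{N}^\Sigma(\vec{k},\vec{m})\rightarrow k\Sigma_{|\vec{k}|}$ onto $H_0$ followed by the inclusion of symmetries into $P$, rather than the literal prescription ``every degree-zero $\gamma$ maps to $\id_{|\vec{k}|}$'', and this is not cosmetic: it is what makes the lemma true. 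Taken literally, the stated assignment is not $k$-linear (it sends both $0$ and $2\gamma$ to $\id$), so it does not even define a map out of the Hom-complexes of $Q(P)$, which are quotients of tensor products of the generating complexes; your version is the linear map agreeing with the stated one on the cycles $AW_{\vec{m}}\circ\chi_*\circ\sh_{\vec{k}}$. More substantively, the paper verifies the chain-map identity only ``in the case that $Ff$ is non-zero (i.e. each $|\gamma^i|=0$)''. The dangerous case, which you isolate correctly, is a word with exactly one $\tilde{N}$-factor $\gamma^j$ of vertical degree one: there $Ff=0$, while $d_vf$ contains the word in which $\gamma^j$ is replaced by $d_v\gamma^j$, all of whose $\tilde{N}$-factors sit in degree zero, so its image under $F$ could a priori be non-zero. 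Your observation that $d_v\gamma^j$ is a boundary, hence is annihilated by the augmentation by the acyclicity of Lemma \ref{lem-doldkan-contractibility-1}, is exactly the step missing from the paper's argument; under the literal definition that term would survive and $F$ would fail to be a chain map. So your route does not merely reprove the lemma, it patches a real gap in the paper's own proof.

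Two steps you leave implicit and should record, since they are precisely the properties your augmentation must have: (i) multiplicativity on degree-zero elements, needed both for relation (3) of $\sim_{red}$ on the $\tilde{N}^\Sigma$ side and for extending $F$ to words; this holds because composition is a chain map and therefore descends to $H_0$, where $[AW_{\vec{l}}\circ\chi'_*\circ\sh_{\vec{m}}]\circ[AW_{\vec{m}}\circ\chi_*\circ\sh_{\vec{k}}]=[AW_{\vec{l}}\circ(\chi'\chi)_*\circ\sh_{\vec{k}}]$ using $\sh_{\vec{m}}\circ AW_{\vec{m}}\simeq\id$; and (ii) invariance under whiskering, $\epsilon(\Par_{\vec{k}}(\gamma))=\epsilon(\gamma)=\epsilon(\Par_{\vec{m}}(\gamma))$, which is what makes the two scalars in your partition square equal; it follows from associativity of the shuffle and Alexander--Whitney maps (Notation \ref{notn-sh-aw}), since whiskering $AW_{\vec{b}}\circ\sh_{\vec{a}}$ with $\iota_{\vec{k}}$ yields $AW_{\Par_{\vec{k}}(\vec{b})}\circ\sh_{\Par_{\vec{k}}(\vec{a})}$.
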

\begin{proof}
It is clear that the assignment is natural in $P$ if it is well-defined, which we now verify. Given a morphism $\gamma\colon \vec{a}\rightarrow \vec{b}$ in $N_n$ and $f=(f_1,...,f_n)\colon \vec{k}\rightarrow \vec{m}$ in $P^{\otimes}$, we must verify that the diagram
\sq{\Par_{\vec{k}}(\vec{a}),\Par_{\vec{m}}(\vec{a}),\Par_{\vec{k}}(\vec{b}),\Par_{\vec{m}}(\vec{b}),\Par_{f}(\vec{a}),\Par_{\vec{m}}(\gamma),\Par_{\vec{k}}(\gamma),\Par_{f}(\vec{b})}
remains commutative after applying $F$. It is sufficient to assume that $\gamma\in \tilde{N}(\vec{a},\vec{b})_0$. But $F$ takes $\Par_{\vec{k}}(\gamma)$ to the identity and we have in general that
$$\Par_{\Par_f(\vec{a})}((l(\vec{a}))=\Par_f((n))$$
such that
$$F(\Par_f(\vec{a}))=\Par_f((n))=F(\Par_f(\vec{b})),$$
so $F$ is well defined. To see that $F$ preserves the differentials on each morphism complex, note that for a  morphism
$$f=\left\{\vec{k}_0\xrightarrow{\phi^0}\vec{k}_1\xrightarrow{\gamma^0}\vec{k}_2\xrightarrow{\phi^1}...\xrightarrow{\gamma^m} \vec{k}_{2m}\right\}$$
where $\phi^i\in P^{\otimes}(\vec{k}_{2i},\vec{k}_{2i+1})$ and $\gamma^i \in \tilde{N}(\vec{k}_{2i+1},\vec{k}_{2i+2})$, the differential is given by
$$df=(d_v \gamma^m)\circ \phi^m \circ ... \circ \gamma^0\circ \phi^0+ (-1)^{|\gamma^m|}\gamma^m\circ (d_h \phi^m) \circ ... \circ \gamma^0\circ \phi^0+...$$
$$+ (-1)^{|\gamma^m|+|\phi^m|+...+|\gamma^0|}\gamma^m \circ \phi^m \circ ... \circ \gamma^0\circ d_h\phi^0$$
In the case that $Ff$ is non-zero (i.e. each $|\gamma^i|=0$) this differential is identical to the differential in $P$.
\end{proof}

\begin{lem} \label{lem-Q-factorization}
Let $P$ be a dg-prop and let $\vec{k},\vec{m}\in\Ob Q(P)$. The map
$$\Hom_{\text{Tot}(Q(P))}(\vec{k},\vec{m})\rightarrow \Hom_P(|\vec{k}|,|\vec{m}|)$$
induced by $F$ is a quasi-isomorphism.
\end{lem}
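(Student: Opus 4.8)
The plan is to show that $F$ is in fact a homotopy equivalence on each Hom-complex by exhibiting an explicit quasi-inverse. First I would define a section
$$ s\colon \Hom_P(|\vec k|,|\vec m|)\rightarrow \Hom_{\text{Tot}(Q(P))}(\vec k,\vec m), \qquad s(g)=AW_{\vec m}\circ g\circ \sh_{\vec k}, $$
where $g$ is regarded as the length-one $P^\otimes$-morphism $(|\vec k|)\to(|\vec m|)$ and $\sh_{\vec k},AW_{\vec m}$ are the degree-zero elements of $\tilde N^\Sigma$ from Notation \ref{notn-sh-aw}. Since $\sh_{\vec k}$ and $AW_{\vec m}$ are $d_v$-closed and are sent by $F$ to identities (Lemma \ref{defn-Q-projection}), $s$ is a chain map and $F\circ s=\id$; it therefore suffices to prove $s\circ F\simeq \id$. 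As a preliminary simplification I would use that $\sh_{\vec k}\colon\vec k\to(|\vec k|)$ and $AW_{\vec k}\colon(|\vec k|)\to\vec k$ satisfy $AW_{\vec k}\sh_{\vec k}\simeq\id$ and $\sh_{\vec k}AW_{\vec k}\simeq\id$ in $\tilde N^\Sigma$, so that $\vec k$ and the single block $(|\vec k|)$ are homotopy equivalent objects of $\text{Tot}(Q(P))$. Because $F$ collapses both to $|\vec k|$ and respects these maps, this reduces the statement to the case where $\vec k=(a)$ and $\vec m=(b)$ are single blocks.

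For the core computation I would filter $\Hom_{Q(P)}((a),(b))$ by the horizontal ($P^\otimes$-)degree. The associated $E_0$-differential is then the vertical differential $d_v$ coming from $\tilde N^\Sigma$. Each $\tilde N^\Sigma$-letter of a word contributes a tensor factor whose $d_v$-homology, by Lemma \ref{lem-tildeN-contract} and Lemma \ref{lem-doldkan-contractibility-1}, is concentrated in degree zero and spanned by the classes $AW_{\vec b}\circ\chi_*\circ \sh_{\vec a}$ with $\chi\in\Sigma_{|\vec a|}$. Hence $E_1$ is concentrated in vertical degree zero, and every class is represented by a word in which the $P^\otimes$-letters are interspersed with such shuffle-permutation-Alexander-Whitney letters.

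To identify $E_1$ I would invoke the partition relations. By Observation \ref{rmk-par-example} one has $\sh_{\vec x'}\circ f=\Par_f((n))\circ \sh_{\vec x}$ and $AW\circ\Par_f((n))=f\circ AW$, so each $P^\otimes$-letter $f$ can be slid through its adjacent $\sh$ and $AW$ maps, where it becomes the single total operation $\Par_f((n))\in P$. Using that $\sh_{\vec x}AW_{\vec x}\simeq\id$ becomes the identity in $E_1$, the interior shuffle/Alexander-Whitney pairs cancel, and since $\sh_{(a)}=\id$ and $AW_{(b)}=\id$ on the single-block ends, each word collapses to a single morphism of $P$ from $(a)$ to $(b)$, with permutation letters absorbed into the $\Sigma$-action of the prop $P$. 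This yields $E_1\cong \Hom_P(a,b)$ with $d_1$ the differential of $P$, realized precisely by $F$. The spectral sequence therefore degenerates at $E_2$, as everything is in vertical degree zero and all higher differentials vanish, and we conclude that $F$ induces an isomorphism on homology.

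The step I expect to be the main obstacle is making this $E_1$-computation rigorous in the presence of the defining relations of $Q(P)$: a morphism complex is a quotient of a sum of tensor products of $P^\otimes$- and $\tilde N^\Sigma$-complexes by the interchange and partition relations, and these relations—the partition relation in particular holding for $\tilde N^\Sigma$-letters of arbitrary degree—are not obviously compatible with a naive Künneth argument for the $d_v$-homology. I would address this by fixing once and for all a normal form for words, pushing all permutation letters to one side and orienting every word as $AW_{\vec m}\circ(\text{middle})\circ\sh_{\vec k}$, so that the complex is presented as an honest direct sum of tensor products on which the contraction homotopies of $\tilde N^\Sigma$ act factorwise; equivalently, one transports the explicit contractions of Lemma \ref{lem-doldkan-contractibility-1} to a contracting homotopy for $\id-sF$ and checks directly that it descends through the interchange and partition relations.
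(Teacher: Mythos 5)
Your overall architecture coincides with the paper's: your section $s$ is exactly the paper's comparison map $A$ (whose vertical-degree-zero part is $f\mapsto AW_{\vec m}\circ f\circ \sh_{\vec k}$), the identity $F\circ s=\id$ is the paper's observation $F\circ \Tot(A)=\id$, and your filtration by horizontal degree, with $E_0$-differential $d_v$, is the same bicomplex spectral sequence the paper uses; the preliminary reduction to single-block objects via the homotopy equivalences $\sh_{\vec k}$, $AW_{\vec k}$ is a harmless extra simplification. The genuine gap is the $E_1$-computation, and you have located it yourself but not repaired it. The morphism complex of $Q(P)$ is a quotient of a direct sum of tensor products by the reduction, interchange and partition relations, and these relations identify words of \emph{different shapes}: the partition relation equates a word with a $P^\otimes$-letter to the left of an $\tilde N^\Sigma$-letter with a word in which it sits to the right, passing through different intermediate objects. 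Consequently there is no presentation of the quotient as ``an honest direct sum of tensor products on which the contraction homotopies act factorwise''; your first proposed repair asserts exactly such a presentation without justification, and your second (``transport the contractions to a homotopy for $\id-sF$ and check it descends through the relations'') is a restatement of the difficulty rather than an argument, since compatibility of a factorwise contraction with the shape-mixing relations is precisely what fails to be evident.

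What is actually needed, and what the paper supplies, is a homotopy-level normal form statement: every morphism represented by an elementary tensor of generators is \emph{vertically homotopic} to one of the form $AW_{\vec m}\circ\tilde\gamma\circ G(f)\circ \sh_{\vec k}$, where $\tilde\gamma\in\tilde N((|\vec m|),(|\vec m|))$ and $G(f)$ is the composite in $P$ of the morphisms $\Par_{\phi^i}((n_i))$ attached to the $P^\otimes$-letters. The paper proves this by strong induction on the vertical degree of the word. Your sliding argument via Observation \ref{rmk-par-example} treats only letters of the special degree-zero form $\Par_{\vec k}(\sh_n)$ and $\Par_{\vec k}(AW_n)$; it is the base case of that induction. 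For a letter of positive vertical degree the partition relation does not apply directly, because a general element of $\tilde N^\Sigma(\vec k,\vec m)$ is not of the form $\Par_{\vec k}(\gamma)$. The paper's key device is to use contractibility of $\tilde N$ to replace such a letter, up to vertical homotopy, by $AW\circ \Par_{\vec k}(\gamma')\circ \sh$, after which the partition relation lets $P^\otimes$-letters slide past it; there is moreover a separate non-trivial case in the induction when a single letter carries the entire vertical degree, so the word cannot be split into two halves of strictly smaller degree. Without this induction your claim that $E_1$ is concentrated in vertical degree zero and isomorphic to $\Hom_P(a,b)$ is unproved, and the rest of your argument does not get off the ground.
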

\begin{proof}
Denote by $c_v\Hom_P(|\vec{k}|,|\vec{m}|)$ the bicomplex which has $\Hom_P(|\vec{k}|,|\vec{m}|)$ concentrated in vertical degree $0$, and consider the map of bicomplexes 
$$A\colon c_v\Hom_P(|\vec{k}|,|\vec{m}|)\simeq \tilde{N}((|\vec{m}|),(|\vec{m}|))\otimes \Hom_P(|\vec{k}|,|\vec{m}|)\rightarrow \Hom_{Q(P)}(\vec{k},\vec{m})$$
where the left map is the homotopy equivalence taking $f\in \Hom_P(|\vec{k}|,|\vec{m}|)$ to $\id\otimes f$, and the right map takes $\gamma\otimes f$ to $AW_{\vec{m}}\circ \gamma \circ f \circ \sh_{\vec{k}}$. We will show that the totalization of $A$ is a quasi-isomorphism and a quasi-inverse to the map induced by $F$ on Hom-complexes.

Let $f\colon \vec{k}\rightarrow \vec{m}$ with $|f|=(d,d')$ in $Q(P)$ be a composition of generators of $Q(P)$, i.e. $f$ is represented by an elementary tensor 
$$\vec{k}_0\xrightarrow{\phi^0}\vec{k}_1 \xrightarrow{\gamma^0} \vec{k}_2 \xrightarrow{\phi^1} ... \xrightarrow{\gamma^{m-1}} \vec{k}_{2m}$$
in $Q_0(P)(\vec{k},\vec{m})$, where $\phi^i\in P^{\otimes}(\vec{k}_{2i},\vec{k}_{2i+1})$ and $\gamma^i \in \tilde{N}(\vec{k}_{2i+1},\vec{k}_{2i+2})_0$. We write
$$n_i:=l(\vec{k}_{2i})=l(\vec{k}_{2i+1}).$$
For such an $f$, we write $G(f)\in\Hom_P(|\vec{k}_0|,|\vec{k}_{2m}|)$ for the morphism
$$G(f)=\Par_{\phi^{m-1}}((n_{m-1}))\circ ... \circ \Par_{\phi^0}((n_0)).$$
Note that $G(f)$ does not depend on the representative of $f$. In particular, well-definedness with respect to $\sim_{\Par}$ can be seen by the identity
$$\Par_{\Par_{\phi}(\vec{a})}((l(\vec{a}))=\Par_\phi ((n)).$$
for any $\phi\colon \vec{k}\rightarrow \vec{m}$ in $P^\otimes$ with $l(\vec{k})=n$.

If for any $\phi$ which is represented by an elementary tensor, the homology class of $\phi$ is represented by a composition $AW_{\vec{m}}\circ \tilde{\gamma} \circ (G(\phi)) \circ \sh_{\vec{k}}$, where $\tilde{\gamma}\in \tilde{N}((|\vec{m}|),(|\vec{m}|))_{d'}$, then the map $A$ above is a quasi-isomorphism after totalizing. Indeed, assume that $f\in \Hom_{Q(P)}(\vec{k},\vec{m})$ is a cycle with respect to the vertical differential. It is given by a sum
$$f=f_1+...+f_n\in \Hom_{Q(P)}(\vec{k},\vec{m})_{d,n}.$$
where each $f_i$ is represented by an elementary tensor. We may assume that each $f_i$ has the form $AW_{\vec{m}}\circ \gamma_i \circ (G(f_i)) \circ \sh_{\vec{k}}$. Using the contractibility of $\tilde{N}$, we may in fact assume that the $\gamma_i$ are identical, such that $f$ represents the same vertical homology class as
$$AW_{\vec{m}}\circ \gamma \circ (G(f)) \circ \sh_{\vec{k}}$$
for some $\gamma\in \tilde{N}((|\vec{m}|),(|\vec{m}|))$. Now the vertical differential acts only on $\gamma$, which must be a cycle, hence a boundary in $\tilde{N}((|\vec{m}|),(|\vec{m}|))$ unless $d=0$, hence this cycle represents a trivial homology class if $d>0$. In the case $d=0$, $\gamma=\id$ is a cycle which is not a boundary. It follows that on homology,
$$H_*(\Hom_{Q(P)}(\vec{k},\vec{m});d_v)\simeq \Hom_P(|\vec{k}|,|\vec{m}|).$$
We get an isomorphism of $E_1$-pages of the spectral sequence for a double complex:
$$H_*(\tilde{N}((|\vec{m}|),(|\vec{m}|))\otimes \Hom_P(|\vec{k}|,|\vec{m}|),d_v)\xrightarrow{\sim} H_*(\Hom_{Q(P)}(\vec{k},\vec{m});d_v)$$
hence $A$ is a quasi-isomorphism after totalizing. Now, for any $f\in \Hom_P(k,m)$ we have
$$F\circ \Tot(A)(f)=F(AW_{\vec{m}} \circ (f) \circ \sh_{\vec{k}})=f$$
such that $F\circ \Tot(A)$ is the identity. By the 2-out-of-3 property for quasi-isomorphisms, $F$ induced quasi-isomorphisms on Hom-complexes.

In the following, for $a,b$ elements of a bicomplex $C$ with $|a|=|b|=(d,d')$, a \emph{vertical homotopy} $h\colon  a\simeq b$ means an element $h$ of $C$ with $|h|=(d,d'+1)$ such that $d_vh=b-a$.

We are left to show that each morphism $f\in\Hom_{Q(P)}(\vec{k},\vec{m})$ which is a composition of generators admits a homotopy to the desired form. This is done by performing (strong) induction on the vertical degree $d'$. For $d'=0$, the morphism $f$ is given \emph{a priori} by a sequence of generators
$$\vec{k}_0\xrightarrow{\phi^0}\vec{k}_1 \xrightarrow{\gamma^0} \vec{k}_2 \xrightarrow{\phi^1} ... \xrightarrow{\gamma^{m-1}} \vec{k}_{2m}$$
where  $\phi^i\in P^{\otimes}(\vec{k}_{2i},\vec{k}_{2i+1})$ and $\gamma^i \in \tilde{N}(\vec{k}_{2i+1},\vec{k}_{2i+2})_0$. Write $n_i:=l(\vec{k}_{2i})=l(\vec{k}_{2i+1})$.
To begin, we may fix for each $\gamma^i$ a vertical homotopy $c(\gamma^i)\colon  \gamma^i \simeq AW_{\vec{k}_{2i+2}}\circ \sh_{\vec{k}_{2i+1}}$. Applying the $c(\gamma^{i})$ we obtain a new morphism
$$g=\vec{k}_0\xrightarrow{\phi^0}\vec{k}_1 \xrightarrow{AW_{\vec{k}_{2}}\circ \sh_{\vec{k}_1}} \vec{k}_{2} \xrightarrow{\phi^1} ... \xrightarrow{AW_{\vec{k}_{2m}}\circ \sh_{\vec{k}_{2m-1}}} \vec{k}_{2m}$$
equipped with a vertical homotopy $f\simeq g$.
Now repeated application of the relations 
\begin{center}
\begin{tikzpicture}
\node (A1) at (-3,0) {$\vec{k}_{2i}$};
\node (B1) at (0,0) {$\vec{k}_{2i+1}$};
\node (C1) at (-3,-2) {$(|\vec{k}_{2i}|)$};
\node (D1) at (0,-2) {$(|\vec{k}_{2i+1}|)$};
\node (A2) at (3,0) {$(|\vec{k}_{2i}|)$};
\node (B2) at (6,0) {$(|\vec{k}_{2i+1}|)$};
\node (C2) at (3,-2) {$\vec{k}_{2i}$};
\node (D2) at (6,-2) {$\vec{k}_{2i+1}$};

\path[->,font=\scriptsize,>=angle 90]
(A1) edge node[above] {$\phi^i$} (B1)
(A1) edge node[left] {$\sh_{\vec{k}_{2i}}$} (C1)
(B1) edge node[auto] {$\sh_{\vec{k}_{2i+1}}$} (D1)
(C1) edge node[below] {$\Par_{\phi^i}((n_i))$} (D1);

\path[->,font=\scriptsize,>=angle 90]
(A2) edge node[above] {$\Par_{\phi^i}((n_i))$} (B2)
(A2) edge node[left] {$AW_{\vec{k}_{2i}}$} (C2)
(B2) edge node[auto] {$AW_{\vec{k}_{2i+1}}$} (D2)
(C2) edge node[below] {$\phi^i$} (D2);
\end{tikzpicture}
\end{center}
allows us to rewrite $g$ as the composition
$$g= AW_{\vec{k}_{2m}}\circ \Par_{\phi^{m-1}}((n_{m-1}))\circ (AW_{\vec{k}_{2m-2}}\circ \sh_{\vec{k}_{2m-2}}) \circ ... $$
$$ ... \circ \Par_{\phi^1}((n_1)) \circ (AW_{\vec{k}_{2}}\circ \sh_{\vec{k}_{2}})\circ \Par_{\phi^0}((n_0)) \circ \sh_{\vec{k}_0}$$
Now choose vertical homotopies $\beta_{\vec{k}_{2i}}\colon (AW_{\vec{k}_{2i}}\circ \sh_{\vec{k}_{2i}})\rightarrow \id_{(|\vec{k}_{2i}|)}$, giving us a composition of the desired form. This completes the base case.

For $d'>0$, we may again write $f$ as a sequence of generators
$$\vec{k}_0\xrightarrow{\phi^0}\vec{k}_1 \xrightarrow{\gamma^0} \vec{k}_{2} \xrightarrow{\phi^1} ... \xrightarrow{\gamma^{m-1}} \vec{k}_{2m}$$
where $\phi^i\in P^{\otimes}(\vec{k}_{2i},\vec{k}_{2i+1})$ with $n_i:=l(\vec{k}_{2i})=l(\vec{k}_{2i+1})$ and now $\gamma^i \in \tilde{N}(\vec{k}_{2i+1},\vec{k}_{i+1,0})_{d'_i}$. We now consider two cases. Assume first that $d'_i<d'$ for each $i$. Let $j$ be the least $i$ such that $|\gamma^i|>0$. By our assumption on the $d'_i$, $j<m-1$. Write $f'$ for the composition 
$$\vec{k}_{2j+2}\xrightarrow{\phi^{j+1}}\vec{k}_{2j+3} \xrightarrow{\gamma^{j+1}} \vec{k}_{j+2,0} \xrightarrow{\phi^{j+2}} ... \xrightarrow{\gamma^{m-1}} \vec{k}_{2m}$$
and write $f''$ for the composition
$$\vec{k}_0\xrightarrow{\phi^0}\vec{k}_1 \xrightarrow{\gamma^0} \vec{k}_{2} \xrightarrow{\phi^1} ... \xrightarrow{\gamma^{j}} \vec{k}_{2j+2}.$$
By induction, we may rewrite $f'$ and $f''$ up to homotopy as
$$f'\simeq AW_{\vec{k}_{2m}}\circ \tilde{\gamma}' \circ (G(f')) \circ \sh_{\vec{k}_{2j+2}}$$
where $\tilde{\gamma}'\in \tilde{N}((m),(m))_{d'_{j+1}+...+d'_{m-1}}$, and
$$f''\simeq AW_{\vec{k}_{2j+2}}\circ \tilde{\gamma}'' \circ (G(f'')) \circ \sh_{\vec{k}_0}$$
where $\tilde{\gamma}''\in \tilde{N}((|\vec{k}_{2j+2}|),(|\vec{k}_{2j+2}|))_{d'_{0}+...+d'_{j}}$. Hence we get a vertical homotopy
$$f\simeq AW_{\vec{k}_{2m}}\circ \tilde{\gamma}' \circ (G(f'))\circ \tilde{\gamma}'' \circ (G(f'')) \circ \sh_{\vec{k}_0}$$
By induction, we may now rewrite
$$(G(f'))\circ \tilde{\gamma}'' \circ(G(f''))\simeq \tilde{\gamma}''' \circ (G(f'))\circ (G(f''))=\tilde{\gamma}''' \circ (G(f))$$
for some $\tilde{\gamma}'''\in \tilde{N}((|\vec{k}_{2m}|),(|\vec{k}_{2m}|))_{d'_{0}+...+d'_{j}}$ to obtain a composition of the desired form. This completes the case when $d_i'<d'$ for all $i$.

Finally, assume that there is a $j$ such that $d'_j=d'$. If $j=m-1$, then the result follows from the base case and contractibility of $\tilde{N}((|\vec{m}|),(|\vec{m}|))$. Namely, for a morphism $f$ in $Q(P)$ represented by a sequence of generators
$$\vec{k}_0\xrightarrow{\phi^0}\vec{k}_1 \xrightarrow{\gamma^0} \vec{k}_{2} \xrightarrow{\phi^1} ... \xrightarrow{\gamma^{m-1}} \vec{k}_{2m}$$
as above, where $|\gamma^{m-1}|=d'$, we get vertical homotopies
$$f\simeq \gamma^{m-1}\circ AW_{\vec{k}_{2m-1}}\circ \gamma' \circ (G(f))\circ \sh_{\vec{k}_0}$$
$$\simeq AW_{\vec{k}_{2m}} \circ \gamma \circ (G(f))\circ \sh_{\vec{k}_0}$$
for some $\gamma'\in \tilde{N}((|\vec{k}_{2m-1}|),(|\vec{k}_{2m-1}|))_{0}$ and $\gamma\in \tilde{N}((|\vec{k}_{2m}|),(|\vec{k}_{2m}|))_{d'}$, where the first homotopy comes from the base case, and the second by contractibility of $\tilde{N}((|\vec{m}|),(|\vec{m}|))$.

If $j<m-1$, we will provide a vertical homotopy between $f$ and another morphism $f'$ for which $d'_{j+1}=d'$. By the above, this will finish the argument. We apply a homotopy $\gamma^j\simeq AW_{\vec{k}_{2j+2}}\circ \bar{\gamma}^j \circ \sh_{\vec{k}_{2j+1}}$ where $\bar{\gamma}^j\in \tilde{N}((|\vec{k}_{2j+2}|),(|\vec{k}_{2j+2}|))_{d'}$. By contractibility, we may assume that $\bar{\gamma}_j$ is of the form $\Par_{\vec{k}_{2j+2}}(\gamma'_j)$ for some $\gamma'_j\in \tilde{N}((n_{j+1}),(n_{j+1}))_{d'}$. To name a concrete such element, one can use the (higher) homotopies witnessing $\sh$ and $AW$ as mutual homotopy inverses. Now using the relations
\begin{center}
\begin{tikzpicture}
\node (A) at (-5,0) {$(|\vec{k}_{2j+2}|)$};
\node (B) at (0,0) {$(|\vec{k}_{2j+3}|)$};
\node (C) at (-5,-2) {$(|\vec{k}_{2j+2}|)$};
\node (D) at (0,-2) {$(|\vec{k}_{2j+3}|)$};

\path[->,font=\scriptsize,>=angle 90]
(A) edge node[above] {$\Par_{\phi^{j+1}}((n_j))$} (B)
(A) edge node[left] {$\Par_{\vec{k}_{2j+2}}(\gamma'_j)$} (C)
(B) edge node[auto] {$\Par_{\vec{k}_{2j+3}}(\gamma'_j)$} (D)
(C) edge node[below] {$\Par_{\phi^{j+1}}((n_j))$} (D);
\end{tikzpicture}
\end{center}
we obtain a composition
$$f\simeq f'= \left\{ \vec{k}'_{0}\xrightarrow{\phi'^0}\vec{k}'_{1} \xrightarrow{\gamma'^0} \vec{k}'_{2} \xrightarrow{\phi'^1} ... \xrightarrow{\gamma'^{m-1}} \vec{k}'_{2m} \right\}$$
where for $i\neq j+1$ we have $\vec{k}'_{2i}=\vec{k}_{2i}$ and $\vec{k}'_{2i+1}=\vec{k}_{2i+1}$, for $i\neq j+1$ we have $\phi'^i=\phi^i$, and for $i\neq j+1,j$ we have $\gamma^i=\gamma'^i$. Finally, $\vec{k}'_{2j+2}=(|\vec{k}_{2j}|)$, $\vec{k}'_{2j+3}=(|\vec{k}_{2j+3}|)$, $\phi'^{j+1}=\Par_{\phi^{j+1}}((n_{j+1}))$, $\gamma'^j=\sh_{\vec{k}_{2j+1}}$ and $\gamma'^{j+1}=\gamma^{j+1}\circ AW_{\vec{k}_{2j+3}}\circ \Par_{\vec{k}_{2j+3}}(\gamma'_j)$. Collecting the differences between $f$ and $f'$ in a diagram, we have
\begin{center}
\begin{tikzpicture}
\node (A1) at (0,0) {$\vec{k}_{2j+1}$};
\node (A2) at (4,0) {$\vec{k}_{2j+2}$};
\node (A3) at (8,0) {$\vec{k}_{2j+3}$};
\node (A4) at (12,0) {$\vec{k}_{2j+4}$};
\node (B1) at (1,-2) {$(|\vec{k}_{2j+1}|)$};
\node (B2) at (5,-2) {$(|\vec{k}_{2j+1}|)$};
\node (C1) at (2,-4) {$(|\vec{k}_{2j+3}|)$};
\node (C2) at (6,-4) {$(|\vec{k}_{2j+3}|)$};

\node (S) at (2,-1) {$(\sim)$};

\path[->,font=\scriptsize,>=angle 90]
(A1) edge node[auto] {$\gamma^j$} (A2)
	 edge node[left] {$\gamma'^j=\sh_{\vec{k}_{2j+1}}$} (B1)
(A2) edge node[auto] {$\phi^{j+1}$} (A3)
(A3) edge node[auto] {$\gamma^{j+1}$} (A4)
(B1) edge node[auto] {$\Par_{\vec{k}_{2j+2}}(\gamma'_j)$} (B2)
	 edge node[left] {$\phi'^{j+1}=\Par_{\phi^{j+1}}((n_{j+1}))$} (C1)
(B2) edge node[left] {$AW_{\vec{k}_{2j+2}}$} (A2)
	 edge node[left] {$\Par_{\phi^{j+1}}((n_{j+1}))$} (C2)
(C1) edge node[auto] {$\Par_{\vec{k}_{2j+2}}(\gamma'_j)$} (C2)
(C2) edge node[right] {$AW_{\vec{k}_{2j+3}}$} (A3)
(C1) edge [bend right=40] node[below right] {$\gamma'^{j+1}$} (A4);
\end{tikzpicture}
\end{center}
where the square marked $(\sim)$ commutes up to vertical homotopy.

We see now that for the composition $f'$, $|\gamma'^{j+1}|=d'$, and this finishes the argument.
\end{proof}

Recall that for a prop $P$, a $P$-algebra is a symmetric monoidal functor $\Phi\colon P\rightarrow \Ch_k$ and a $\Ass \otimes P$-algebra is the same as a symmetric monoidal functor $P\rightarrow \dgAlg_k$.

\begin{lem} \label{lem-alpha-TotQ}
The functor $\text{Tot}(Q(-))\colon \mathsf{dgprop}\rightarrow \dgCat^{\otimes}$ has the property that there is a natural transformation of functors $\mathsf{dgprop}^{op}\rightarrow\Cat$
$$\alpha\colon \Fun^{\otimes}(-,\dgAlg_k)\rightarrow \Fun^{\otimes}(\text{Tot}(Q(-)),\Ch_k)$$
such that $\alpha_P(\Phi)(1)=C(\Phi(1))$.
\end{lem}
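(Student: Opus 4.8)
The plan is to define, for a symmetric monoidal functor $\Phi\colon P\to\dgAlg_k$, the image $\alpha_P(\Phi)\colon\text{Tot}(Q(P))\to\Ch_k$ directly on objects and on the two families of generators, and then to check that it descends through the relations cutting out $Q(P)$. On objects I would set $\alpha_P(\Phi)(\vec{k})=C^{\vec{k}}(\Phi(1))$, using Notation \ref{notn-C-vec}. On a generator $f=(f_1\otimes\dots\otimes f_n)_\sigma\colon\vec{k}\to\vec{m}$ coming from $P^\otimes$, where $f_i\colon k_i\to m_{\sigma(i)}$ in $P$, I would apply $C\circ\Phi$ factorwise: since $\Phi$ is symmetric monoidal, $\Phi(f_i)\colon\Phi(1)^{\otimes k_i}\to\Phi(1)^{\otimes m_{\sigma(i)}}$ is a map of dg-algebras, so $C(\Phi(f_i))$ is a chain map, and $\alpha_P(\Phi)(f)$ is the tensor product of these reindexed by $\sigma$, with the Koszul sign dictated by the composition rule of $C^\otimes$. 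On a generator $\gamma\in\tilde{N}^\Sigma(\vec{k},\vec{m})$ I would take the operation on Hochschild complexes furnished by Proposition \ref{defn-action-hochschild}, evaluated at the algebra $\Phi(1)$. A general morphism is then sent to the composite, and $\alpha_P$ is defined on a morphism of $P$-algebras $\Phi\to\Phi'$ by applying $C$ factorwise to the underlying algebra map $\Phi(1)\to\Phi'(1)$.

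The heart of the argument is to verify that this assignment respects the three families of relations. For $\sim_{red}$ I would note that a permutation $\chi\in\vec{\Sigma}$ acts by permuting tensor factors of $C^{\vec{k}}(\Phi(1))$ whether it is regarded as living in $P^\otimes$ or in $\tilde{N}^\Sigma$: the former by definition, the latter because (as in the lemma following Proposition \ref{defn-action-hochschild}) a degree-zero element of $\tilde{N}^\Sigma_\chi$ acts as $\chi_*$. Compatibility with composition within each family is then functoriality of $C\circ\Phi$ on the $P^\otimes$ side and the compatible-with-composition clause of Proposition \ref{defn-action-hochschild} on the $\tilde{N}^\Sigma$ side. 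The interchange relation holds because $f\otimes^P\id$ and $\id\otimes^N\psi$ act on disjoint tensor factors of $C^{\vec{k}*\vec{m}}(\Phi(1))$, so they commute by bifunctoriality of $\otimes$ in $\Ch_k$ with matching Koszul signs. The partition relation is the crucial point, and I expect it to be the main obstacle: I would deduce it from the fact that each $\gamma\in\tilde{N}^\Sigma$ is by definition a natural transformation of functors out of a power of $\sMod_k$, so that after precomposing with the cyclic bar construction it commutes with the maps of simplicial chain complexes induced by the algebra homomorphisms $\Phi(f_i)$. Observation \ref{rmk-par-example} already records this for $\gamma=\sh$ and $\gamma=AW$; the general case is the same naturality statement transported through the structure isomorphisms $\theta$ and $\zeta$ used to build the action in Proposition \ref{defn-action-hochschild}.

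Granting well-definedness, I would check that $\alpha_P(\Phi)$ is a dg-functor and is symmetric monoidal. Compatibility with differentials is a bookkeeping check: $\alpha_P(\Phi)$ preserves the bigrading (horizontal degree from $P^\otimes$, vertical from $\tilde{N}^\Sigma$), sends the horizontal differential to $C$ of the differential on $P$-morphisms and the vertical differential to the differential of natural transformations, and the totalization signs of Construction \ref{const-epsilon} and Observation \ref{obs-reorder} are exactly the Koszul signs of $\Ch_k$, so the two differentials agree after $\text{Tot}$. Symmetric monoidality follows since $\alpha_P(\Phi)(\vec{k}*\vec{m})=C^{\vec{k}*\vec{m}}(\Phi(1))=C^{\vec{k}}(\Phi(1))\otimes C^{\vec{m}}(\Phi(1))$ on the nose, the monoidal product of morphisms in $Q(P)$ is defined through $\otimes_1$ which $\alpha_P(\Phi)$ turns into $\otimes$ in $\Ch_k$, and the symmetry of $Q(P)$ lies in $\vec{\Sigma}$, hence maps to the symmetry of $\Ch_k$; here I would invoke that $C$ itself is symmetric monoidal.

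Finally, naturality of $\alpha$ in $P$ is immediate from the explicit description: for a prop morphism $g\colon P\to P'$, the functor $Q(g)$ is the identity on $\tilde{N}^\Sigma$-generators and is $g^\otimes$ on $P^\otimes$-generators, so $\alpha_{P'}(\Phi')\circ\text{Tot}(Q(g))$ and $\alpha_P(\Phi'\circ g)$ agree on both families of generators and hence as symmetric monoidal functors. The normalization $\alpha_P(\Phi)(1)=C(\Phi(1))$ is read off from the definition on objects, since the length-one vector $(1)$ goes to $C^{(1)}(\Phi(1))=C(\Phi(1)^{\otimes 1})=C(\Phi(1))$.
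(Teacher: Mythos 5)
Your proposal follows the paper's proof essentially step for step: the same definition on objects and on the two families of generators, the same verification of the reduction, interchange, and partition relations (with the partition relation handled, exactly as in the paper, by combining naturality of elements of $\tilde{N}^\Sigma$ with naturality of the monoidal structure maps $\theta$ of $B^{cy}$), the same symmetric monoidality check via $\otimes_1$, and the same generator-by-generator argument for naturality in $P$. The only point the paper spells out that you leave implicit is its Step 2, verifying that the componentwise maps $C^{\vec{l}}(\phi_1)$ assemble into a natural transformation against both kinds of generators; this is a routine check using the same two naturality ingredients you already invoke, so your outline is complete in substance.
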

\begin{proof}
We divide the proof into several steps. First we construct the functors $\alpha_P(\Phi)$. Then we show functoriality in $\Phi$. Finally we will show naturality in $P$.

\bigskip \underline{Step 1: Constructing $\alpha_P(\Phi)$.}

\bigskip Let $P$ be a dg-prop, and let $\Phi\colon P\rightarrow \dgAlg_k$ be a symmetric monoidal functor. Throughout this section of the proof, we write $A=\Phi(1)$ for ease of notation. We will produce a symmetric monoidal functor $\alpha_P(\Phi)\colon  \text{Tot}(Q(P))\rightarrow \Ch_k$ that sends $\vec{k}$ to $C^{\vec{k}}(A)$ (recall Notation \ref{notn-C-vec}). We describe the action of $\alpha_P(\Phi)$ on morphisms in terms of the generators of $\text{Tot}(Q(P))$. If $f\colon n\rightarrow m$ is in $P$, then $(f)$ acts by
$$C^{(n)}(A)\simeq C(\Phi(n))\xrightarrow{C(f)} C(\Phi(m))\simeq C^{(m)}(A).$$
This determines the action of morphisms in $P^\otimes$. Furthermore, $\tilde{N}$ acts according to (the proof of) Proposition \ref{defn-action-hochschild}. This determines how the generators of $\text{Tot}(Q(P))$ act. Note that the relations $\sim_{red}$ are preserved by this assignment. Furthermore, if $f\colon \vec{k}_0\rightarrow \vec{k}_1$ is in $P^\otimes$ and $\psi\colon \vec{m}_0\rightarrow \vec{m}_1$ is in $\tilde{N}^\Sigma$, we have isomorphisms $C^{\vec{k_0}*\vec{m}_0}(A)= C^{\vec{k}_0}(A)\otimes C^{\vec{m}_0}(A)$, and modulo these isomorphisms, we have
$$\alpha_P(\Phi)(\id_{\vec{m}_0}\otimes^P f)=\id_{C^{\vec{m_0}}(A)}\otimes \alpha_P(\Phi)(f)$$
and
$$\alpha_P(\Phi)(\phi\otimes^N \id_{\vec{k}_0})=\alpha_P(\Phi)(\psi) \otimes \id_{C^{\vec{k_0}}(A)}$$
such that the interchange relation is preserved. Since
$$\left(\id_{C^{\vec{m_0}}(A)}\otimes \alpha_P(\Phi)(f)\right) \circ \left( \alpha_P(\Phi)(\psi) \otimes \id_{C^{\vec{k_0}}(A)} \right)= \left(\alpha_P(\Phi)(f) \otimes \alpha_P(\Phi)(\psi) \right)$$
we have
$$\alpha_P(\Phi)(f\otimes_1 g)= \alpha_P(\Phi)(f)\otimes \alpha_P(\Phi)(g)$$
such that $\alpha_P(\Phi)$ defines a symmetric monoidal functor $\text{Tot}(Q_1(P))\rightarrow \Ch_k$.

To see that this assignment descends to a symmetric monoidal functor $\text{Tot}(Q(P))\rightarrow \Ch_k$, we are left to verify that the partition relations are preserved. Let $f=(f_1,...,f_n)\colon \vec{k}\rightarrow \vec{m}$ be an $n$-tuple of morphisms in $P$ and let $\gamma\colon \vec{a}\rightarrow \vec{b}$ be a morphism in $N_n$. We are will verify that the relations
\sq{\Par_{\vec{k}}(\vec{a}),\Par_{\vec{m}}(\vec{a}),\Par_{\vec{k}}(\vec{b}),\Par_{\vec{m}}(\vec{b}),\Par_{f}(\vec{a}),\Par_{\vec{m}}(\gamma),\Par_{\vec{k}}(\gamma),\Par_{f}(\vec{b})}
are preserved under the assignment $\alpha_P(\Phi)$. There is an isomorphism (see Definition \ref{defn-theta-iso} and the proof of Proposition \ref{defn-action-hochschild})
$$\alpha_P(\Phi)(\Par_{\vec{k}}(\vec{a}))=C^{\Par_{\vec{k}}(\vec{a})}(A) \stackrel{N\theta_{\vec{a}}^{-1}}{\rightarrow} N^{\vec{a}}(B^{cy}(A^{\otimes k_1}),...,B^{cy}(A^{\otimes k_n})).$$
Write $N^{\vec{a}}(B^{cy}(A^{\vec{k}}))$ for the latter. Consider the following diagrams:
\begin{center}
\begin{tikzpicture}
\node (A1) at (-4,0) {$N^{\vec{a}}(B^{cy}(A^{\vec{k}}))$};
\node (B1) at (0,0) {$N^{\vec{a}}(B^{cy}(A^{\vec{m}}))$};
\node (C1) at (-4,-2) {$N^{\vec{b}}(B^{cy}(A^{\vec{k}}))$};
\node (D1) at (0,-2) {$N^{\vec{b}}(B^{cy}(A^{\vec{m}}))$};
\node (A2) at (3,0) {$C^{\Par_{\vec{k}}(\vec{a})}(A)$};
\node (B2) at (7,0) {$C^{\Par_{\vec{m}}(\vec{a})}(A)$};
\node (C2) at (3,-2) {$N^{\vec{a}}(B^{cy}(A^{\vec{k}}))$};
\node (D2) at (7,-2) {$N^{\vec{a}}(B^{cy}(A^{\vec{m}}))$};

\path[->,font=\scriptsize,>=angle 90]

(A1) edge node[above] {$N^{\vec{a}}(B^{cy}(f))$} (B1)
(B1) edge node[auto] {$\gamma_{B^{cy}(A^{\vec{m}})}$} (D1)
(A1) edge node[left] {$\gamma_{B^{cy}(A^{\vec{k}})}$} (C1)
(C1) edge node[below] {$N^{\vec{b}}(B^{cy}(f))$} (D1);

\path[->,font=\scriptsize,>=angle 90]
(A2) edge node[above] {$\alpha_P(\Phi)(\Par_{f}(\vec{a}))$} (B2)
(B2) edge node[auto] {$N\theta_{\vec{a}}^{-1}$} (D2)
(A2) edge node[left] {$N\theta_{\vec{a}}^{-1}$} (C2)
(C2) edge node[below] {$N^{\vec{a}}(B^{cy}(f))$} (D2);
\end{tikzpicture}
\end{center}
The left diagram commutes by the definition of $\tilde{N}$, while the right diagram commutes by the naturality of the symmetric monoidal structure maps of $B^{cy}$. Finally, observe that
$$\alpha_P(\Phi)(\Par_{\vec{k}}(\gamma))=N\theta_{\vec{b}}\circ \gamma_{B^{cy}(A^{\vec{k}})}\circ N\theta^{-1}_{\vec{a}}$$
Together these facts imply that the relations in $\text{Tot}(Q(P))$ are preserved by the action. To be precise, we have a commutative diagram
\begin{center}
\begin{tikzpicture}

\node (A1) at (0,0) {$C^{\Par_{\vec{k}}(\vec{a})}(A)$};
\node (A2) at (4,0) {$C^{\Par_{\vec{m}}(\vec{a})}(A)$};
\node (B1) at (0,-2) {$N^{\vec{a}}(B^{cy}(A^{\vec{k}}))$};
\node (B2) at (4,-2) {$N^{\vec{a}}(B^{cy}(A^{\vec{m}}))$};
\node (C1) at (0,-4) {$N^{\vec{b}}(B^{cy}(A^{\vec{k}}))$};
\node (C2) at (4,-4) {$N^{\vec{b}}(B^{cy}(A^{\vec{m}}))$};
\node (D1) at (0,-6) {$C^{\Par_{\vec{k}}(\vec{b})}(A)$};
\node (D2) at (4,-6) {$C^{\Par_{\vec{m}}(\vec{b})}(A)$};

\path[->,font=\scriptsize,>=angle 90]

(A1) edge node[auto] {$\alpha_P(\Phi)(\Par_{f}(\vec{a}))$} (A2)
	 edge node[auto] {$N\theta_{\vec{a}}^{-1}$} (B1)
(A2) edge node[auto] {$N\theta_{\vec{a}}^{-1}$} (B2)
(B1) edge node[auto] {$N^{\vec{a}}(B^{cy}(f))$} (B2)
	 edge node[auto] {$\gamma_{B^{cy}(A^{\vec{k}})}$} (C1)
(B2) edge node[auto] {$\gamma_{B^{cy}(A^{\vec{m}})}$} (C2)
(C1) edge node[auto] {$N^{\vec{b}}(B^{cy}(f))$} (C2)
	 edge node[auto] {$N\theta_{\vec{b}}$} (D1)
(C2) edge node[auto] {$N\theta_{\vec{b}}$} (D2)
(D1) edge node[auto] {$\alpha_P(\Phi)(\Par_{f}(\vec{b}))$} (D2)
(A1.180) edge [bend right=30] node[left] {$\alpha_P(\Phi)(\Par_{\vec{k}}(\gamma)$} (D1.180)
(A2.0) edge [bend left=30] node[right] {$\alpha_P(\Phi)(\Par_{\vec{m}}(\gamma)$} (D2.0)
;
\end{tikzpicture}
\end{center}

Thus $\alpha_P(\Phi)$ is a functor from $\text{Tot}(Q(P))$ as claimed. 

\bigskip \underline{Step 2: Showing that $\alpha_P$ is a functor.}

\bigskip We will notationally identify an object in $\Fun^{\otimes}(\Ass\otimes P,\Ch_k)$ with its value at $1$. Let $\phi\colon A\rightarrow B$ be a morphism in $\Fun^{\otimes}(P,\dgAlg_k)$. We will produce a natural transformation $\alpha_P(A)\rightarrow \alpha_P(B)$. The component at $\vec{l}\in Q(P)$ is given by applying $\phi_1\colon A\rightarrow B$ componentwise, i.e.~$\alpha_P(\phi)_{\vec{l}}= C^{\vec{l}}(\phi_1) \colon C^{\vec{l}}(A)\rightarrow C^{\vec{l}}(B)$. It is sufficient to check naturality against the generators of $Q(P)$. If $f\colon n\rightarrow m$ is a morphism in $P$, then the following diagram commutes because it commutes before applying $C(-)$.
\sq{C(A^{\otimes n}),C(B^{\otimes n}),C(A^{\otimes m}),C(B^{\otimes m}),C(\phi_n),C(f_B),C(f_A),C(\phi_m)}
Let $\gamma\colon \vec{k}\rightarrow\vec{m}$ be a morphism in $\tilde{N}$ and consider the following diagrams:
\begin{center}
\begin{tikzpicture}
\node (A1) at (-4,0) {$C^{\vec{k}}(A)$};
\node (B1) at (-1,0) {$N^{\vec{k}}(B^{cy}(A))$};
\node (C1) at (-4,-2) {$C^{\vec{k}}(B)$};
\node (D1) at (-1,-2) {$N^{\vec{k}}(B^{cy}(B))$};
\node (A2) at (3,0) {$N^{\vec{k}}(B^{cy}(A))$};
\node (B2) at (7,0) {$N^{\vec{m}}(B^{cy}(A))$};
\node (C2) at (3,-2) {$N^{\vec{k}}(B^{cy}(B))$};
\node (D2) at (7,-2) {$N^{\vec{m}}(B^{cy}(B))$};

\path[->,font=\scriptsize,>=angle 90]

(A1) edge node[above] {$N\theta^{-1}_{\vec{k}}$} (B1)
(B1) edge node[auto] {$N^{\vec{k}}(B^{cy}(\phi))$} (D1)
(A1) edge node[left] {$C^{\vec{k}}(\phi)$} (C1)
(C1) edge node[below] {$N\theta^{-1}_{\vec{k}}$} (D1);

\path[->,font=\scriptsize,>=angle 90]
(A2) edge node[above] {$N^\gamma (B^{cy}(A))$} (B2)
(B2) edge node[auto] {$N^{\vec{m}}(B^{cy}(\phi))$} (D2)
(A2) edge node[left] {$N^{\vec{k}}(B^{cy}(\phi))$} (C2)
(C2) edge node[below] {$N^{\gamma}(B^{cy}(B))$} (D2);
\end{tikzpicture}
\end{center}
The left diagram commutes by naturality of the symmetric monoidal structure maps of $B^{cy}$ and the right diagram commutes by the definition of $\tilde{N}$. Since $\gamma$ acts by
$$\alpha_P(A)(\gamma)=N\theta_{m}\circ N^\gamma (B^{cy}(A)) \circ N\theta^{-1}_{\vec{k}}$$
the commutativity of these two families of diagrams implies naturality with respect to the morphisms in $\tilde{N}^\Sigma$.

\bigskip \underline{Step 3: Showing that $\alpha$ is natural in $P$.}

\bigskip Let $i\colon P\rightarrow P'$ be a morphism of dg-props. We need to check commutativity of the diagram
\begin{center}
\begin{tikzpicture}
\node (A) at (-6,0) {$\Fun^{\otimes}(P',\dgAlg_k)$};
\node (B) at (0,0) {$\Fun^{\otimes}(\text{Tot}(Q(P')),\Ch_k)$};
\node (C) at (-6,-2) {$\Fun^{\otimes}(P,\dgAlg_k)$};
\node (D) at (0,-2) {$\Fun^{\otimes}(\text{Tot}(Q(P)),\Ch_k)$};

\path[->,font=\scriptsize,>=angle 90]
(A) edge node[above] {$\alpha_{P'}$} (B)
(A) edge node[left] {$i^*$} (C)
(B) edge node[auto] {$\text{Tot}(Q(i))^*$} (D)
(C) edge node[below] {$\alpha_{P}$} (D);
\end{tikzpicture}
\end{center}
Let $\Phi\colon P'\rightarrow \dgAlg_k$ be a symmetric monoidal functor. We first show that the functors $\alpha_{P}(i^*\Phi)$ and $\text{Tot}(Q(i))^* \alpha_{P'}(\Phi)$ are equal. Since $i$ and $\text{Tot}(Q(i))$ are isomorphisms on objects, we have
$$\alpha_P(i^*\Phi)(\vec{k})=C^{\vec{k}}(i^*\Phi(1))=C^{\vec{k}}(\Phi(1))$$
and
$$\text{Tot}(Q(i))^*\alpha_{P'}(\Phi)(\vec{k})= \alpha_{P'}(\Phi)(\vec{k})=C^{\vec{k}}(\Phi(1))$$
so they are equal on objects. Let $\gamma\colon \vec{k}\rightarrow \vec{m}$ be a morphism in $\tilde{N}^{\Sigma}$. Since $\text{Tot}(Q(i))$ is the identity on $\tilde{N}^{\Sigma}$, we similarly have
$$\text{Tot}(Q(i))^*\alpha_{P'}(\Phi)(\gamma)=\alpha_{P'}(\Phi)(\gamma)= N\theta_{m}\circ N^\gamma (B^{cy}(\Phi(1))) \circ N\theta^{-1}_{\vec{k}}$$
and
$$\alpha_P(i^*\Phi)(\gamma)=N\theta_{m}\circ N^\gamma (B^{cy}(i^*\Phi(1))) \circ N\theta^{-1}_{\vec{k}}=N\theta_{m}\circ N^\gamma (B^{cy}(\Phi(1))) \circ N\theta^{-1}_{\vec{k}}$$
so the action of $\tilde{N}^{\Sigma}$ coincides as well. We now compare the action by a morphism $f\colon k\rightarrow m$ in $P$. We have
$$\alpha_P(i^*\Phi)(f)\colon  C^{(k)}(\Phi(1))\simeq C(\Phi(k))\xrightarrow{C(i(f))}C(\Phi(m))\simeq C^{(m)}(\Phi(1)).$$
Notice that $\alpha_P(i^*\Phi)(f)=\alpha_{P'}(\Phi)(i(f))$. Now since $\text{Tot}(Q(i))(f)=i(f)$ in $\text{Tot}(Q(P'))$, we have
$$\text{Tot}(Q(i))^*\alpha_{P'}(\Phi)(f)=\alpha_{P'}(\Phi)(i(f))$$
so the two functors coincide on objects.

Before we verify that the functors also agree on morphisms, we recall a basic fact about compositions of natural transformations. If $\C,\C',\D$ are categories, $j\colon \C\rightarrow C'$ is a functor and $\alpha\colon F\Rightarrow G \colon  \C'\rightarrow \D$ is a natural transformation, then the pullback of $\alpha$ along $j$ is given componentwise by $(\alpha* \id_j)_c = \alpha_{j(c)}$.

For a morphism $\psi\colon \Phi\rightarrow \Psi$, we have the natural transformations
$$\alpha_P(i^*\psi)\colon \alpha_P(i^*\Phi)\rightarrow \alpha_P(i^* \Psi)$$
and
$$\text{Tot}(Q(i))^*\alpha_{P'}(\psi)\colon \text{Tot}(Q(i))^*\alpha_{P'}(\Phi)\rightarrow \text{Tot}(Q(i))^*\alpha_{P'}(\Psi)$$
of functors $\text{Tot}(Q(P))\rightarrow \Ch_k$. It is sufficient to check that they coincide on components. Let $\vec{k}$ be an object of $\text{Tot}(Q(P))$. Then since $i$ is the identity on objects, we get

$$\alpha_P(i^*\psi)(\vec{k})=C^{\vec{k}}(i^*\psi(1))=C^{\vec{k}}(\psi(1))$$
and
$$\text{Tot}(Q(i))^*\alpha_{P'}(\psi)(\vec{k})= \alpha_{P'}(\psi)(\vec{k})=C^{\vec{k}}(\psi(1))$$
so they are equal.
\end{proof}

\bigskip\noindent \emph{Proof of Theorem A}: Define $\tilde{(-)}\colon \mathsf{dgprop}\rightarrow\mathsf{dgprop}$ to be the functor taking a dg-prop $P$ to the full subcategory of $\text{Tot}(Q(P))$ generated by the objects $\{(1)^n\}_{n\geq 0}$. To see that this defines a functor, recall from Construction \ref{const-Q-2} that for a morphism of dg-props $P\rightarrow P'$, the induced symmetric monoidal functor $\text{Tot}(Q(P))\rightarrow \text{Tot}(Q(P'))$ is the identity on object monoids, hence it restricts to a prop morphism $\tilde{P}\rightarrow \tilde{P}'$.

\bigskip \underline{The natural quasi-equivalence $\tilde{(-)}\rightarrow \id$.}

\bigskip Let $F|_{\tilde{P}}\colon \tilde{P}\rightarrow P$ be the composition
$$\tilde{P}\hookrightarrow \Tot(Q(P))\xrightarrow{F}P$$
It is clear that $F|_{\tilde{P}}$ induces an isomorphism on object monoids. By Lemma \ref{lem-Q-factorization}, $F|_{\tilde{P}}$ also induces quasi-isomorphisms on Hom-complexes, hence it is a quasi-equivalence. Naturality of $F$ and the inclusion $\tilde{P}\rightarrow \Tot(Q(P))$ imply that $F|_{\tilde{P}}$ is a natural quasi-equivalence.

\bigskip \underline{The natural transformation $\tilde{\alpha}$.}

\bigskip To produce the natural transformation $\tilde{\alpha}$, we use the transformation $\alpha$ from Lemma \ref{lem-alpha-TotQ}. Recall that there is an equivalence of categories
$$\Fun^{\otimes}(\Ass\otimes P,\Ch_k)\simeq \Fun^{\otimes}(P,\dgAlg_k).$$
The natural inclusion $i\colon \tilde{(-)}\rightarrow \text{Tot}(Q(-))$ gives us a natural transformation
$$i^*\colon \Fun^{\otimes}(\text{Tot}(Q(-)),\Ch_k)\rightarrow \Fun^{\otimes}(\tilde{(-)},\Ch_k)$$
and we define $\tilde{\alpha}$ to be the composition
$$\tilde{\alpha}=i^*\circ \alpha\colon  \Fun^{\otimes}(\Ass\otimes -,\Ch_k)\rightarrow \Fun^{\otimes}(\tilde{(-)},\Ch_k).$$
Because $\tilde{\alpha}$ is a restriction of $\alpha$, we have that for any prop $P$, and symmetric monoidal functor $\Phi\colon \Ass\otimes P\rightarrow \Ch_k$, there is an equality $\tilde{\alpha}_P(\Phi)(1)=\alpha_P(\Phi)(1)=C(\Phi(1))$, hence $\tilde{\alpha}$ has the stated properties.

\hfill $\square$

\begin{example} \label{example-chopf}
Consider the example $P=\CHopf$, the prop encoding a commutative Hopf algebra structure. Note that every morphism in $\CHopf$ is an algebra homomorphism, hence we have an equivalence $\Ass\otimes \CHopf\simeq \CHopf$ and Theorem A gives a recipe for the natural coherent commutative Hopf algebra structure on Hochschild chains of commutative Hopf algebras. In particular, $\tilde{\CHopf}$ is generated in degree 0 by the morphisms
$$()\xrightarrow{\eta}(1)$$
$$(1,1)\xrightarrow{\sh}(2)\xrightarrow{m}(1)$$
$$(1)\xrightarrow{\epsilon}()$$
$$(1)\xrightarrow{\Delta}(2)\xrightarrow{AW} (1,1)$$
$$(1)\xrightarrow{S}(1)$$
An example of a generator in degree 1 is the bialgebra relation, in which we need the homotopy $\theta\in \tilde{N}^{\Sigma}((2,2),(2,2))_1$ to interpolate between the upper and lower legs of the diagram. Here $F\in \Sigma_4$ is the transposition $(2,3)$.
\begin{center}
\begin{tikzpicture}
\node (A1) at (-4,0) {$(1,1)$};
\node (A2) at (0,0) {$(2,2)$};
\node (A3) at (4,0) {$(1,1,1,1)$};
\node (B1) at (-4,-3) {$(2)$};
\node (B2) at (0,-3) {$(4)$};
\node (B3) at (4,-3) {$(2,2)$};
\node (C1) at (-4,-6) {$(1)$};
\node (C2) at (0,-6) {$(2)$};
\node (C3) at (4,-6) {$(1,1)$};

\node (D1) at (1.75,-1.25) {$\theta$};
\node (D2) at (2,-1.5) {\rotatebox{225}{$\Longrightarrow$}};

\path[->,font=\scriptsize,>=angle 90]
(A1) edge node[auto] {$(\Delta,\Delta)$} (A2)
	 edge node[auto] {$\sh$} (B1)
(A2) edge node[auto] {$F\circ (AW,AW)$} (A3)
	 edge node[left] {$\Par_{(2,2)}(\sh)$} (B2)
(A3) edge node[auto] {$(\sh,\sh)$} (B3)
(B1) edge node[auto] {$\Par_{(\Delta,\Delta)}((2))$} (B2)
	 edge node[auto] {$m$} (C1)
(B2) edge node[below] {$\Par_{(2,2)}(AW)\circ F_*$} (B3)
	 edge node[left] {$\Par_{(m,m)}((2))\circ F_*$} (C2)
(B3) edge node[auto] {$(m,m)$} (C3)
(C1) edge node[auto] {$(\Delta)$} (C2)
(C2) edge node[auto] {$AW$} (C3);
\end{tikzpicture}
\end{center}
In a similar way, we need the contraction $\alpha_{(1,1)}\colon  AW\circ \sh \simeq \id\in \tilde{N}^{\Sigma}((1,1),(1,1))$ for the antipode diagrams.

Note that $\tilde{\CHopf}$ still has a strictly commutative multiplication. If $\mathcal{C}\mathbb{E}_n\mathcal{H}opf$ encodes commutative and $\mathbb{E}_n$-cocommutative Hopf algebras, then $\tilde{\mathcal{C}\mathbb{E}_n\mathcal{H}opf}$ will also be $\mathbb{E}_n$ cocommutative for $n\leq \infty$, but if $\mathcal{CCH}opf$ is the prop encoding a Hopf algebra structure which is both commutative and cocommutative, then $\tilde{\mathcal{CCH}opf}$ is strictly commutative but only $\mathbb{E}_{\infty}$-cocommutative, since $AW$ is not a symmetric monoidal transformation.
\end{example}


\begin{thebibliography}{9}

 

\bibitem{bv}
 Michael Boardman, Rainer Vogt,
 \emph{Homotopy invariant algebraic structures on topological spaces},
 Lecture Notes in Mathematics, Vol. 347, Springer-Verlag, (1973).

\bibitem{brun07}
 Morten Brun, Zbigniew Fiedorowicz, and Rainer M. Vogt,
 \emph{On the multiplicative structure of topological Hochschild homology},
 Algebr. Geom. Topol., 7:1633–-1650, 2007

\bibitem{dold61}
 Albrecht Dold,
 \emph{\"{U}ber die Steenrodschen Kohomologieoperationen},
 Ann. of Math. 73, 1961, 258--294.

\bibitem{em53}
 Samuel Eilenberg and Saunders Mac Lane,
 \emph{On the Groups $H(\Pi, n)$, I},
 Annals of Mathematics, Vol. 58, No. 1 (Jul., 1953), pp. 55--106.

\bibitem{em54}
 Samuel Eilenberg and Saunders MacLane,
 \emph{On the Groups $H(\Pi, n)$, II: Methods of Computation},
 Annals of Mathematics, Vol. 60, No. 1 (Jul., 1954), pp. 49--139.

\bibitem{fiorenza}
 D. Fiorenza,
 \emph{An introduction to the Language of Operads},
 \verb#www.mat.uniroma1.it/ fiorenza/ilo.ps.gz#,
 2006.

\bibitem{fresse08}
 Benoit Fresse,
 \emph{Props in model categories and homotopy invariance of structures},
 arXiv:0812.2738v4,
 5 Dec 2008.



\bibitem{loday92}
 Jean-Louis Loday,
 \emph{Cyclic Homology},
 Springer Verlag,
 1992.

\bibitem{maclanecatalg}
 Saunders MacLane,
 \emph{Categorical Algebra}
 Bull. Amer. Math. Soc., Volume 71, Number 1 (1965), 40--106.

\bibitem{markl}
 Martin Markl,
 \emph{Operads and PROPs},
 arXiv:math/0601129v3,
 6 Jan 2006.

\bibitem{richter00}
 Birgit Richter,
 \emph{$\mathbb{E}_\infty$-structure for $Q_*(R)$},
 Math. Ann. 316, 547--564 (2000).
  
\bibitem{richter03}
 Birgit Richter,
 \emph{Symmetry Properties of the Dold-Kan Correspondence},
 Mathematical Proceedings of the Cambridge Philosophical Society, 134(1), pp. 95-–102,
 2003.

\bibitem{ss03}
 Stefan Schwede and Brooke Shipley,
 \emph{Equivalences of monoidal model categories},
 Algebr. Geom. Topol., Volume 3, Number 1 (2003), 287--334.



\bibitem{wahl16}
 Nathalie Wahl and Craig Westerland,
 \emph{Hochschild homology of structured algebras},
 Advances in Math. 288 (2016), 240--307.

\bibitem{wahl12}
 Nathalie Wahl,
 \emph{Universal operations in Hochschild homology},
 J. Reine Angew. Math. 720 (2016), 81--127
 2016.

\bibitem{wolff73}
 Harvey Wolff,
 \emph{V-cat and V-graph},
 J. Pure Appl. Algebra 4 (1974), 123-–135.

\end{thebibliography}
\end{document}